\documentclass{amsart}

\usepackage{amsmath, amssymb, amsfonts, amsthm, mathtools, mathrsfs}
\usepackage[all,cmtip,2cell]{xy}
\UseAllTwocells
\usepackage{tikz-cd}
\tikzcdset{scale cd/.style={every label/.append style={scale=#1},
    cells={nodes={scale=#1}}}}
\usepackage{graphicx}
\usepackage{hyperref}
\usepackage[T1]{fontenc}

\counterwithin{equation}{section}

%%%%%%%%%%%%%%%%%%%diagram%%%%%%%%%%%%%%%%%%%%%%%%

\newcommand{\pbcorner}[1][ul]{\save*!/#1+1.6pc/#1:(1,-1)@^{|-}\restore}
\newcommand{\pocorner}[1][dr]{\save*!/#1+1.6pc/#1:(1,-1)@^{|-}\restore}

\newcommand{\simd}{\mathrel{\rotatebox[origin=c]{-90}{$\sim$}}}
\newcommand{\simu}{\mathrel{\rotatebox[origin=c]{90}{$\sim$}}}

%%%%%%%%%%%%%%%%%theorem style%%%%%%%%%%%%%%%%%%%%%%%%%%%%

\theoremstyle{plain}
\newtheorem{thm}{Theorem}[section]
\newtheorem{lem}[thm]{Lemma}
\newtheorem{prop}[thm]{Proposition}
\newtheorem{cor}[thm]{Corollary}
\newtheorem{conj}[thm]{Conjecture}
\newtheorem{que}[thm]{Question}

\newtheorem*{thm*}{Theorem}
\newtheorem*{lem*}{Lemma}
\newtheorem*{prop*}{Proposition}
\newtheorem*{cor*}{Corollary}
\newtheorem*{rem*}{Remark}
\newtheorem*{conj*}{Conjecture}

\theoremstyle{definition}
\newtheorem{dfn}[thm]{Definition}

\newtheorem{rem}[thm]{Remark}

\newtheorem*{dfn*}{Definition}
\newtheorem*{ex*}{Example}
\newtheorem*{NaC}{Notation and Convention}
\newtheorem*{ACK}{Acknowledgement}

%%%%%%%%%%%%%%%%abbreviations%%%%%%%%%%%%%%%%%%%%%

\newcommand\ab{\allowbreak}
\newcommand\bs{\boldsymbol}

%%%%%%%%%%%%%%%%math fonts%%%%%%%%%%%%%%%%%%%%%%%

\newcommand{\bA}{{\mathbb A}}

\newcommand{\bC}{{\mathbb C}}
\newcommand{\bD}{{\mathbb D}}

\newcommand{\bL}{{\mathbb L}}

\newcommand{\bP}{{\mathbb P}}
\newcommand{\bQ}{{\mathbb Q}}
\newcommand{\bR}{{\mathbb R}}

\newcommand{\bZ}{{\mathbb Z}}

\newcommand{\cC}{{\mathcal C}}

\newcommand{\cF}{{\mathcal F}}
\newcommand{\cG}{{\mathcal G}}

\newcommand{\cL}{{\mathcal L}}

\newcommand{\cO}{{\mathcal O}}

\newcommand{\cR}{{\mathcal R}}

\newcommand{\cZ}{{\mathcal Z}}

\newcommand{\fC}{{\mathfrak C}}

\newcommand{\bfT}{\mathbf{T}}

\newcommand{\vphi}{\varphi}

%%%%%%%%%%%%%%category%%%%%%%%%%%%%%%%

\DeclareMathOperator{\Perf}{Perf}
\DeclareMathOperator{\Perv}{Perv}

%%%%%%%%%%%%%functor%%%%%%%%%%%%%%%%%%%%

\newcommand\Cone{\mathop{\mathrm{Cone}}\nolimits}

\newcommand{\sHom}{\mathop{\mathcal{H}\! \mathit{om}}\nolimits}
\newcommand{\sRHom}{\mathop{\mathit{R}\mathcal{H}\! \mathit{om}}\nolimits}
\DeclareMathOperator{\FS}{FS}

\newcommand\Spec{\mathop{\mathrm{Spec}}\nolimits}
\newcommand\dSpec{\mathop{\mathbf{Spec}}\nolimits}

\newcommand\Sym{\mathop{\mathrm{Sym}}\nolimits}

\newcommand\Tot{\mathop{\mathrm{Tot}}\nolimits}

\DeclareMathOperator{\mH}{\mathrm{H}}
\newcommand{\HBM}{\mH^{\mathrm{BM}}}

\newcommand\Crit{\mathop{\mathrm{Crit}}\nolimits}

%%%%%%%%%%%%%fuctions/maps%%%%%%%%%%%%%%%%%%%%

\newcommand\id{\mathop{\mathrm{id}}\nolimits}

\newcommand\rank{\mathop{\mathrm{rank}}\nolimits}

\renewcommand\det{\mathop{\mathrm{det}}\nolimits}
\DeclareMathOperator{\vdim}{vdim}

 %closed

\renewcommand{\Re}{\mathop{\mathrm{Re}}}

%%%%%%%%%%%%%%other symbols%%%%%%%%%%%%%%%%%%%%%%%%5

\newcommand{\red}{\mathrm{red}}
\newcommand\vir{\mathrm{vir}}
\DeclareMathOperator{\pt}{pt}

%%%%%%%%%%%%%%%%color%%%%%%%%%%%%%%%%%%

%%%%%%%%%%%%%%%%specific to this paper%%%%%%%%%%%%%%%%%%%%%%%

\DeclareMathOperator{\DT4}{DT4}
\DeclareMathOperator{\Sp}{Sp}
\DeclareMathOperator{\Bl}{Bl}
\DeclareMathOperator{\BM}{BM}
\DeclareMathOperator{\Chow}{Chow}
\DeclareMathOperator{\cl}{cl}

\newcommand{\pr}{\mathrm{pr}}
\newcommand{\bCu}{\widetilde{\bC^*}}

\newcommand{\FSS}{\mathop{\check{\mathrm{FS}}}\nolimits}
\newcommand{\FSn}{\mathop{\widetilde{\mathrm{FS}}}\nolimits}

\newcommand{\DR}{D_{\bR^+}}

\usepackage{subfiles}

\title{Virtual classes via vanishing cycles}
\author{Tasuki Kinjo}

\address{graduate school of mathematical science, the university of tokyo, 3-8-1 komaba,
meguroku, tokyo 153-8914, japan.}
\email{tasuki.kinjo@ipmu.jp}

\subjclass{Primary: 14A30, Secondary:14N35.}
\keywords{Virtual fundamental classes, shifted symplectic geometry, perverse sheaves, Fourier--Sato transform.}
\thanks{This work was supported by the WINGS-FMSP
program at the Graduate School of Mathematical Science, the University of Tokyo, and JSPS KAKENHI Grant number JP21J21118.}

\begin{document}

\begin{abstract}
We develop a new method to construct the virtual fundamental classes for quasi-smooth derived schemes using the perverse sheaves of vanishing cycles on their $-1$-shifted contangent spaces. It is based on the author's previous work that can be regarded as a version of the Thom isomorphism for $-1$-shifted cotangent spaces. We use the Fourier--Sato transform to prove that our classes coincide with the virtual fundamental classes introduced by the work of Behrend--Fantechi and Li--Tian, under the quasi-projectivity assumption.
We also discuss an approach to construct DT4 virtual classes for $-2$-shifted symplectic derived schemes using the perverse sheaves of vanishing cycles.

\end{abstract}

\maketitle

\setcounter{tocdepth}{1}
\tableofcontents

%============================================================

\section{Introduction}

\subsection{History}

In many cases, moduli spaces such as those of stable maps and coherent sheaves have bigger dimensions than the expected dimensions computed by the Riemann--Roch theorem.
To define a correct enumerative invariant counting points in such  a moduli space, we need to integrate cohomology classes over the \emph{virtual fundamental class} which is the cycle class of the expected dimension instead of the usual fundamental class which may be ill-defined.
In the work of Behrend--Fantechi \cite{BF97} and Li--Tian \cite{LT98}, they introduced the notion of perfect obstruction theory for a scheme (or more generally a Deligne-Mumford stack) $X$ and
constructed the virtual fundamental class
\[
[X]^{\vir} \in A_{\mathrm{exp.dim} X}(X)
\]
 from that data.
In the modern term, the notion of perfect obstruction theory is understood as a classical shadow of the quasi-smooth derived structure.
Their work can be used to construct Gromov--Witten invariants  counting stable maps to smooth projective varieties \cite{Beh97, LT98} and Donaldson--Thomas invariants  counting stable sheaves on Calabi--Yau threefolds \cite{Tho00}.

Later, Behrend \cite{Beh09} proved that for a projective variety $X$ defined over $\bC$ and a perfect obstruction theory on it admitting a certain symmetry (called symmetric perfect obstruction theory),
there exists a natural constructible function $\nu_X$ such that the equality
\begin{equation}\label{eq:intro1}
\int_{[X]^{\vir}} 1 = \sum_i i \cdot \chi(\nu_X^{-1}(i))
\end{equation}
holds. The natural perfect obstruction theory on a projective component of the moduli space of stable sheaves on a Calabi--Yau threefold satisfies this assumption.
This suggests that Donaldson--Thomas invariants admit sheaf theoretic categorifications.
This was achieved by the work of Joyce and his collaborators \cite{BBDJS15, BBJ19, BBBBJ15} which we briefly explain now.
Let $\bs{X}$ be a derived scheme equipped with a $-1$-shifted symplectic structure \cite{PTVV13}.
It induces a natural symmetric perfect obstruction theory on $X = t_0(\bs{X})$.
It is proved in \cite{BBJ19} that $\bs{X}$ is Zariski locally equivalent to a derived critical locus of a regular function on a smooth scheme.
If $\bs{X}$ admits an orientation, one can glue certain twists of the locally defined vanishing cycle complexes to define a globally defined perverse sheaf
\[
\varphi_{\bs{X}} \in \Perv(X).
\]
 See \cite{BBDJS15} for the detail. The Euler characteristic of the perverse sheaf $\varphi_{\bs{X}}$ at a point $x \in X$ is equal to the value of the Behrend function $\nu_X (x)$.
Therefore the equality \eqref{eq:intro1} implies the following equality
\[
\int_{[X]^{\vir}} 1 = \sum_i (-1)^i \mH^i(X, \varphi_{\bs{X}}).
\]
If we take $\bs{X}$ to be the derived moduli space of stable sheaves on a Calabi--Yau threefold equipped with the canonical $-1$-shifted symplectic structure \cite[Theorem 0.1]{PTVV13} and the canonical orientation constructed in \cite{JU21}, the perverse sheaf $\varphi_{\bs{X}}$ can be regarded as a categorification of the Donaldson--Thomas invariant.

This paper aims to connect two mathematical objects we have seen: the virtual fundamental class for an \emph{arbitrary} quasi-smooth derived scheme defined over $\bC$ (not necessary $-1$-shifted symplectic!) and the perverse sheaf associated with an oriented $-1$-shifted symplectic derived scheme.

\subsection{Results}

Let $\bs{Y}$ be a quasi-smooth derived scheme over $\Spec \bC$ and 
\[
\bfT^*[-1]\bs{Y} \coloneqq \dSpec_{\bs{Y}}(\Sym (\bL_{\bs{Y}}^\vee[1]))
\]
be its $-1$-shifted cotangent space.
Write $Y = t_0(\bs{Y})$ and $\widetilde{Y} = t_0(\bfT^*[-1]\bs{Y})$ and we let $\pi \colon \widetilde{Y} \to Y$ denote the natural projection.
It is shown in \cite[Proposition 1.21]{PTVV13} that $\bfT^*[-1]\bs{Y} \coloneqq \dSpec_{\bs{Y}}(\Sym (\bL_{\bs{Y}}^\vee[1]))$ carries a canonical $-1$-shifted symplectic structure and one can easily show that it also carries a natural orientation.
Therefore we can define a natural perverse sheaf $\varphi_{\bfT^*[-1]\bs{Y}} \in \Perv(\widetilde{Y})$.
The following isomorphisms are proved in \cite[Theorem 3.1]{Kin21}:
\begin{equation}\label{eq:dimredintro}
\gamma \colon \pi_! \varphi_{\bfT^*[-1]\bs{Y}} \cong \bQ_Y[\vdim \bs{Y}], \ \bar{\gamma} \colon \pi_* \varphi_{\bfT^*[-1]\bs{Y}} \cong \omega_Y[-\vdim \bs{Y}].
\end{equation}
Roughly speaking, the map $\gamma$ can be regarded as a version of the Thom isomorphism for $\bfT^*[-1]\bs{Y}$ and the map $\bar{\gamma}$ is an analog of the homotopy invariance for the singular cohomology.
Therefore the following composition
\[
\bQ_Y[\vdim \bs{Y}] \xrightarrow[\cong]{\gamma^{-1}} \pi_! \varphi_{\bfT^*[-1]\bs{Y}} \to \pi_* \varphi_{\bfT^*[-1]\bs{Y}} \xrightarrow[\cong]{\bar{\gamma}} \omega_Y[-\vdim \bs{Y}]
\]
can be regarded as the Euler class for $\bfT^*[-1]\bs{Y}$. We let
\[
e_{\varphi}(\bfT^*[-1]\bs{Y}) \in \HBM_{2\vdim \bs{Y}}(Y)
\]
be the class corresponding to the above composition.
As the virtual fundamental class is a generalization of the Euler class, it is natural to compare the class $e_{\varphi}(\bfT^*[-1]\bs{Y})$
with the virtual fundamental class $[\bs{Y}]^{\vir}$. The aim of this paper is to prove the following result:

\begin{thm}[= Theorem \ref{thm:main}]\label{thm:intro}
  Assume $\bL_{\bs{Y}} |_Y$ has a global resolution by a two-term complex of locally free sheaves. Then the following equality in $\HBM_{2\vdim{\bs{Y}}}(Y)$ holds:
  \[
  e_{\varphi}(\bfT^*[-1]\bs{Y}) = (-1)^{\vdim \bs{Y} \cdot (\vdim \bs{Y} -1)/2}\cl_{Y}([\bs{Y}]^{\vir})
  \]
  where $\cl_Y \colon A_{\vdim \bs{Y}}(Y) \to \HBM_{2\vdim{\bs{Y}}}(Y)$ is the cycle class map.
\end{thm}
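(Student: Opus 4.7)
The plan is to reduce to a local model in which $\bs{Y}$ is presented as a derived zero locus and to carry out an explicit Fourier--Sato computation there. The hypothesis that $\bL_{\bs{Y}}|_Y$ admits a global two-term locally free resolution $[E^{-1}\to E^0]$ allows one, locally on $Y$ in the Zariski topology, to write $\bs{Y}$ as the derived zero locus of a section $s\in\Gamma(U,F)$ of a vector bundle $F$ on a smooth affine scheme $U$, with $F^\vee|_Y \simeq E^{-1}$ and $\Omega_U|_Y \simeq E^0$. Both $[\bs{Y}]^{\vir}$ and $e_{\varphi}(\bfT^*[-1]\bs{Y})$ are built from local data that glue compatibly along such charts, so my first step is to establish the equality in a single chart of this form, then globalize by naturality in the two-term resolution.

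In this local picture, both sides have transparent descriptions. On the Behrend--Fantechi side,
\[
[\bs{Y}]^{\vir} = 0_{F|_Y}^!\,[C_{Y/U}],
\]
where $C_{Y/U}\hookrightarrow F|_Y$ is the normal cone of $Y$ in $U$ embedded via $ds$. On the vanishing cycle side, there is a canonical equivalence $\bfT^*[-1]\bs{Y}\simeq \dCrit(w)$ where $w\colon V\coloneqq\Tot_U(F^\vee)\to\bA^1$ is the fibrewise-linear function $w(u,\xi)=\langle s(u),\xi\rangle$; under this equivalence, $\varphi_{\bfT^*[-1]\bs{Y}}$ matches the usual vanishing cycle sheaf $\varphi_w(\bQ_V[\dim V])$ up to the discrepancy between the canonical orientation of $\bfT^*[-1]\bs{Y}$ and the trivial orientation on $\dCrit(w)$. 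I expect this discrepancy to be exactly what produces the sign $(-1)^{\vdim\bs{Y}(\vdim\bs{Y}-1)/2}$.

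The core of the argument is a Fourier--Sato computation. Because $w$ is $\bG_m$-equivariant of weight one along the fibres of $V\to U$, the sheaf $\varphi_w(\bQ_V[\dim V])$ is $\bR^+$-monodromic, and the Fourier--Sato transform $\FS\colon D^b_{\bR^+}(V)\to D^b_{\bR^+}(F)$ converts it, up to an explicit shift and twist, into the constant sheaf along the graph of $s$ inside the total space of $F$. Unpacked in the local model, this is exactly the content of the dimensional reduction isomorphisms $\gamma$ and $\bar\gamma$ of \eqref{eq:dimredintro} from \cite{Kin21}: they identify the two sides of the composition defining $e_\varphi(\bfT^*[-1]\bs{Y})$ with the fundamental class of the graph of $s$ in $F|_Y$ and its Verdier dual. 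Scaling $s\mapsto t\cdot s$ and letting $t\to 0$ degenerates the graph to $C_{Y/U}$; the $\bG_m$-equivariance of $w$ makes this deformation compatible with $\gamma$ and $\bar\gamma$, so the composition unwinds to $0_{F|_Y}^![C_{Y/U}]$ in Borel--Moore homology, which is the Behrend--Fantechi class.

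The main obstacle I anticipate is the sign and orientation analysis. The factor $(-1)^{\vdim\bs{Y}(\vdim\bs{Y}-1)/2}$ has to be extracted by comparing the canonical orientation of the $-1$-shifted cotangent (coming from the tautological self-pairing of $\bL_{\bs{Y}}\oplus\bL_{\bs{Y}}^\vee[-1]$) with the orientation on the local presentation $\dCrit(w)$, and then matched against the shift conventions built into $\gamma,\bar\gamma$ and against the cycle class map from Chow to Borel--Moore homology. The Fourier--Sato identification and the deformation of the graph of $s$ to the normal cone are standard monodromic-sheaf manipulations; the globalization from a chart-by-chart comparison should follow from the naturality of $\gamma$ and $\bar\gamma$ under change of local presentation.
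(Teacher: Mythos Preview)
Your proposal has a genuine gap at the globalization step, and the paper explicitly flags this as \emph{the} central difficulty (see \S\ref{ssec:strategy}). You write that both $[\bs{Y}]^{\vir}$ and $e_{\varphi}(\bfT^*[-1]\bs{Y})$ ``are built from local data that glue compatibly along such charts'' and that ``the globalization from a chart-by-chart comparison should follow from the naturality of $\gamma$ and $\bar\gamma$.'' But these are classes in $\HBM_{2\vdim\bs{Y}}(Y)$, and Borel--Moore homology does not satisfy the sheaf condition: two classes that agree on every open of a Zariski cover need not agree globally. Naturality of $\gamma,\bar\gamma$ under change of presentation tells you the \emph{maps} restrict correctly, but equality of the resulting classes on an open cover does not imply equality on $Y$. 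So as stated, the plan to ``establish the equality in a single chart, then globalize'' does not close.

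The paper's fix is to use the global resolution $[E^{-1}\to E^0]$ not merely to produce charts, but to perform a single \emph{global} Fourier--Sato transform $\FS_{E^{-1}}$ of $\varphi_{\bfT^*[-1]\bs{Y}}$ (regarded as a sheaf on $E^{-1}$). Lemma~\ref{lem:keying} shows the result is supported on the cone $C_{E^\bullet}\subset E_1$, and from this one extracts a class $[\fC_Y]^{E^\bullet}_\varphi\in \HBM_{2\rank E_0}(C_{E^\bullet})\cong\HBM_0(\fC_Y)$ via the diagram~\eqref{eq:coneclassconst}. The point is that $C_{E^\bullet}$ has pure dimension $\rank E_0$, so this class lives in \emph{top-degree} Borel--Moore homology, where equality with the fundamental class $[\fC_Y]$ \emph{can} be checked Zariski-locally (Proposition~\ref{prop:coneclass}). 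Only after this is established does a global diagram chase (the final proof of Theorem~\ref{thm:main}) relate $[\fC_Y]^{E^\bullet}_\varphi$ back to $e_\varphi(\bfT^*[-1]\bs{Y})$. In short: the trick is to move the local-to-global step from $\HBM_{2\vdim\bs{Y}}(Y)$ (where it fails) to the top homology of the normal cone (where it works).

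A smaller remark: your local description of the Fourier--Sato transform is not quite what happens. The transform of $\varphi_w(\bQ_V[\dim V])$ is not the constant sheaf on the graph of $s$ later degenerated to the cone; Theorem~\ref{thm:vansp} identifies it directly with $\iota_{C_{Z/X},*}\Sp_{Z/X}(\bQ_U)$, the specialization complex supported on the normal cone. The deformation to the normal cone is already baked into the specialization functor.
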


In other words, the above theorem gives a new construction of the virtual fundamental class in the Borel--Moore homology under a mild assumption that is satisfied when $Y$ is quasi-projective.

\subsection{Strategy of the proof}\label{ssec:strategy}

Before explaining the strategy of the proof of Theorem \ref{thm:intro}, let us recall the construction of the virtual fundamental class due to \cite{BF97, LT98}.
Firstly, the natural map $\bL_{\bs{Y}} |_Y \to \bL_Y$ defines a purely zero dimensional cone substack $\fC_{Y} \subset t_0(\bfT[1]\bs{Y})$ of the classical truncation of the $1$-shifted tangent stack called the intrinsic normal cone of $Y$.
Then the virtual fundamental class is defined to be the image of the class $[\fC_Y] \in A_0(t_0(\bfT[1]\bs{Y}))$ under the natural isomorphism $A_0(t_0(\bfT[1]\bs{Y})) \cong A_{\vdim \bs{Y}}(Y)$.

The main difficulty of Theorem \ref{thm:main} is that proving the statement locally does not imply the global statement since the Borel--Moore homology does not satisfy the sheaf condition (in the $1$-categorical sense).
Therefore we need to compare the intrinsic normal cone $\fC_{Y}$ and the vanishing cycle complex $\varphi_{\bfT^*[-1]\bs{Y}}$ directly though they live in different spaces.

We overcome this difficulty using the Fourier--Sato transform\footnote{The author came up with this idea after a private discussion with Adeel Khan where he learned Khan's ongoing project on derived microlocal geometry. We believe that the technical assumption on the global resolution in Theorem \ref{thm:intro} can be removed once basic properties of the Fourier--Sato transform is proved in this context.} to relate constructible sheaves on $\bfT^*[-1]\bs{Y}$ and those on $\bfT[1]\bs{Y}$.
More precisely take a global resolution $\bL_{\bs{Y}}|_Y \cong [E^{-1} \to E^0]$ by locally free sheaves and consider the Fourier--Sato transform \cite[\S 3.7]{KS13}
\[
\FS_{E^{-1}} \colon D_{\bR^+}^b (E^{-1}) \to  D_{\bR^+}^b ((E^{-1})^\vee).
\]
Using the natural inclusion $t_0(\bfT^*[-1]\bs{Y}) \hookrightarrow E^{-1}$, one can regard $\varphi_{\bfT^*[-1]\bs{Y}}$ as a perverse sheaf on $E^{-1}$.
Then one can show that the complex $\FS_{E^{-1}}(\varphi_{\bfT^*[-1]\bs{Y}})$ has the support contained in the pullback of the intrinsic normal cone $p^* \fC_Y$ along the natural map $p \colon (E^{-1})^\vee \to [(E^{-1})^\vee / (E^0)^\vee ] \cong t_0(\bfT[1]\bs{Y}) $ (see Lemma \ref{lem:keying}).
This observation and some sheaf theoretic analysis imply that Theorem \ref{thm:intro} holds.

\subsection{Microlocal interpretation}

The proof of Theorem \ref{thm:main} outlined in the previous section is motivated by microlocal geometry. We now briefly explain it.

Let $X$ be a manifold and $M \subset X$ be a closed submanifold.
Kashiwara--Schapira \cite[\S 4.3]{KS13} introduced a functor
\[
\mu_{M/X} \colon D^b(X) \to D^b_{\bR^+}(T_M^* X)
\]
called the microlocalization functor. It is defined as the Fourier--Sato dual of the specialization functor
\[
\Sp_{M/X} \colon D^b(X) \to D^b_{\bR^+}(T_M X).
\]

For a quasi-smooth derived scheme $\bs{Y}$, the $-1$-shifted cotangent scheme $\bfT^*[-1]\bs{Y}$ can be regarded as the ``conormal bundle'' along the constant map $\bs{Y} \to *$. Then the following slogan is the main idea behind the proof of  Theorem \ref{thm:main}:

\begin{quote}
    Slogan: The vanishing cycle complex $\varphi_{\bfT^*[-1]\bs{Y}}$ can be thought of as the microlocalization of the constant sheaf on a point along the constant map $\bs{Y} \to *$.
\end{quote}
In other words, the vanishing cycle complex $\varphi_{\bfT^*[-1]\bs{Y}}$ should be thought of as the  ``intrinsic microlocalization''.
Here the usage of the term ``intrinsic'' is completely analogous to the one in the intrinsic normal cone.
We can relate them using the Fourier--Sato transform.

Unfortunately, the above slogan is not mathematically rigorous (at least now) because we do not have  microlocalization functors with respect to arbitrary morphisms between derived schemes. Nevertheless, using a local model for quasi-smooth derived schemes, we can formulate a local version of the above slogan which will be stated in Theorem \ref{thm:vansp}.
This local version is enough for our purposes.

Recently, Kendric Schefers \cite{Sch22} gave a completely different proof of Theorem \ref{thm:vansp}.
Further, he introduced a microlocalization functor for quasi-smooth closed immersion and realized the above slogan when $\bs{Y}$ can be embedded into a smooth ambient space.
In another work, Adeel Khan recently announced a construction of the microlocalization functor for arbitrary morphisms between derived Artin stacks with some technical assumptions.
We believe that the above slogan can be made precise using his mircolocalization functor.

\subsection{Joyce's conjecture}

The present work is motivated by Joyce's conjecture \cite[Conjecture 5.18]{AB17} which we briefly discuss now.
Let $\bs{\tau} \colon \bs{L} \to \bs{X}$ be an oriented Lagrangian of an oriented $-1$-shifted symplectic derived Artin stack.
Write $\tau = t_0(\bs{\tau})$, $L = t_0(\bs{L})$ and $X = t_0(\bs{X})$.
Let $\varphi_{\bs{X}} \in \Perv(X)$ be the vanishing cycle complex associated with the oriented $-1$-shifted symplectic structure on $\bs{X}$.
Then Joyce's conjecture predicts that there exists a natural map
\[
\mu_{\bs{L}} \colon \bQ_L[\vdim \bs{L}] \to \tau^! \varphi_{\bs{X}}.
\]
See \cite[\S 5.3]{AB17} for the full statement of this conjecture and \cite[\S 7]{Joyslide} for important consequences including the construction of cohomological Hall algebras for CY3 categories.
When $\bs{X}$ is a point equipped with the trivial $-1$-shifted symplectic structure, the vanishing cycle complex $\varphi_{\bs{X}}$ is the constant sheaf and the oriented Lagrangian structure for $\bs{\tau}$ is identified with an oriented $-2$-shifted symplectic structure on $\bs{L}$.
In this case, it is expected that the map $\mu_{\bs{L}}$ is given by the DT4 virtual class introduced in the work of Cao--Leung \cite{CL14}, Borisov--Joyce \cite{BJ17} and Oh--Thomas \cite{OT20}.
In other words, the map $\mu_{\bs{L}}$ in the general case should be understood as a relative version of the DT4 virtual class.

Joyce's conjecture suggests that (perverse) sheaf theory may be used to give a new construction of DT4 virtual classes.
Since the virtual fundamental class for a quasi-smooth derived scheme is equal to the DT4 virtual class of its $-2$-shifted cotangent space (see \cite[\S 8]{OT20}), Theorem \ref{thm:intro} can be regarded as the first step toward this project.
In Section \ref{ssec:DT4}, we propose a conjectural approach to construct DT4 virtual classes  assuming some variations of the isomorphism \eqref{eq:dimredintro}.

\subsection{Structure of the paper}

The paper is organized as follows.

In Section \ref{sec:app} we recall some sheaf operations and prove some of their basic properties.

In Section \ref{sec:sp} we recall the construction of Verdier's specialization functor and use it to construct the specialization morphism of the Borel--Moore homology.

In Section \ref{sec:FS} we prove that the Fourier--Sato transform relates the specialization functor and the vanishing cycle functor.
 This gives a new proof of Davison's dimensional reduction theorem \cite[Theorem A.1]{Dav17} and a proof of Theorem \ref{thm:intro} for local models of quasi-smooth derived schemes.
 
 In Section \ref{sec:main} we prove Theorem \ref{thm:intro}.
 
 In Section \ref{sec:future} we discuss some conjectural generalizations of Theorem \ref{thm:intro}.

\begin{ACK}
  I am very grateful to Adeel Khan for sharing many ideas on his ongoing project on derived microlocal geometry.
  Without him, I could have never come up with the idea to use the Fourier--Sato transform to prove Theorem \ref{thm:intro}.
  I also thank my supervisor Yukinobu Toda for fruitful discussions and careful reading,  Hyeonjun Park for teaching me DT4 theory and Alexandre Minets for pointing out that a conjecture in the previous version of this paper was not correct.
\end{ACK}

\begin{NaC}

\begin{itemize}

\item All schemes considered in this paper are separated and of finite type over the complex number field.

\item For a topological space $X$, we let $D^+(X)$ (resp. $D^b(X)$, $D^b _c(X)$) denote the derived category of lower bounded complexes (resp. bounded complexes, bounded complexes with constructible cohomology) in $\bQ$-vector spaces.

\item For a closed subset $Z \subset X$ and a complex $\cF \in D^+(Z)$, we let
      $\cF |_Z$ denote the complex $(Z \hookrightarrow X)^* \cF$,
      $\cF_Z$ denote the complex $(Z \hookrightarrow X)_*(Z \hookrightarrow X)^* \cF$,
      $\cF |_Z^!$ denote the complex $(Z \hookrightarrow X)^! \cF$ and
      $\Gamma_Z(\cF)$ denote the complex $(Z \hookrightarrow X)_!(Z \hookrightarrow X)^! \cF$.

\item For a closed subset $Z \subset X$ and a complex $\cF \in D^+(Z)$, we sometimes regard $\cF$ as a complex on $X$.
Conversely, for a complex $\cG \in D^+(X)$ whose support is contained in $Z$, we sometimes regard $\cG$ as a complex on $Z$.

\item For a continuous map $f \colon X \to Y$ between locally compact Hausdorff spaces such that the functor $f^!$ is defined, we let $\omega_{X/Y}$ denote the complex $f^! \bQ_{Y}$ and
$\mH^*(X \to Y)$ denote its cohomology $\mH^*(X, \omega_{X/Y})$. We also write $\omega_X = \omega_{X / \mathrm{pt}}$ and $\HBM_*(X) = \mH^{-*}(X \to \mathrm{pt})$. The Verdier-duality functor $\sRHom(-, \omega_X)$ is denoted by $\bD_X$.

\item For a complex separated scheme $X$ of finite type and its irreducible subscheme $Z$, the classes $[Z] \in A_{\dim Z}(X)$ and $[Z]_{\BM} \in \HBM_{2 \dim Z}(X)$ are defined by
the pushforward of the fundamental class of $Z$ to $X$ in the Chow group and the Borel--Moore homology group respectively.

\item
If there is no confusion, we use expressions such as $f_*$, $f_!$, and $\sHom$ for the derived functors
$f_*$, $f_!$, and $\sRHom$.

\end{itemize}

\end{NaC}

%============================================================

\section{Preliminaries on sheaf operations}\label{sec:app}

In this section, we recall the definition of the nearby cycle and vanishing cycle functors and the Fourier--Sato transform, and prove some of their basic properties. 
Readers who are familiar with sheaf theory can safely skip this section.
We assume that all topological spaces are locally compact Hausdorff, and for all continuous maps $f$, the functor $f_!$ has finite cohomological dimension.

\subsection{Base change maps}

Consider a commutative diagram of topological spaces
\begin{equation}\label{eq:square}
\begin{tikzcd}
& X' \arrow[r,"f'"]\arrow[d,"g'"] & Y' \arrow[d,"g"]\\
& X \arrow[r,"f"] & Y.
\end{tikzcd}
\end{equation}
We have the natural transformations of functors from $D^+(X)$ to  $D^+(Y')$
\begin{equation}\label{eq:bc1}
  g^* f_* \to f'_* {g'}^*,
\end{equation}
and functors from $D^+(X)$ to  $D^+(Y')$
\begin{equation}\label{eq:bc2}
  f'_! {g'}^! \to g^! f_!.
\end{equation}
The former map is defined by the composition
\[
g^* f_* \to f'_* {f'}^* g^* f_* \cong f'_* {g'}^* f^* f_*  \to f'_* {g'}^*
\]
and the latter map is defined similarly.

If the diagram \eqref{eq:square} is Cartesian, we have following natural transformations of functors from $D^+(X')$ to  $D^+(Y)$
\begin{equation}\label{eq:bc3}
f_! g'_* \to  g_* f'_!,
\end{equation}
functors from $D^+(X)$ to  $D^+(Y')$
\begin{equation}\label{eq:bc4}
g^* f_! \xrightarrow{\sim} f'_! {g'}^*,
\end{equation}
functors from $D^+(X)$ to  $D^+(Y')$
\begin{equation}\label{eq:bc5}
  f'_* {g'}^! \xrightarrow{\sim} g^! f_*,
\end{equation}
and functors from  $D^+(Y)$ to  $D^+(X')$
\begin{equation}\label{eq:bc6}
{g'}^* f^! \to {f'}^! g^*.
\end{equation}
The map \eqref{eq:bc3} is defined in \cite[(2.5.7)]{KS13}, \eqref{eq:bc4} is defined in \cite[Proposition 2.6.7]{KS13},
\eqref{eq:bc5} is defined in \cite[Proposition 3.1.9 (ii)]{KS13} and \eqref{eq:bc6} is defined in \cite[Proposition 3.1.9 (iii)]{KS13}.
We call the maps \eqref{eq:bc1}–\eqref{eq:bc6} base change maps.

\subsection{Natural transformations between four functors}\label{sect:nattrans}

Let $f \colon X \to Y$ be a continuous map between topological spaces.
Then we can construct the following natural transformations
\begin{align}
  \id \to f_* f^*, &\ f^* f_* \to \id \label{eq:*unit} \\
  \id \to f^! f_! ,&\ f_! f^! \to \id \label{eq:!unit}\\
  f_! &\to f_* \label{eq:!*1}.
\end{align}
If $f$ is a closed immersion, we have a natural transformation
\begin{equation}\label{eq:!*2}
f^! \to f^*
\end{equation}
obtained by the composition
\[
f^! \cong f^* f_* f^! \cong f^* f_! f^! \to f^*.
\]
If we have an isomorphism $f^! \bQ_Y \cong \bQ_X[r]$ (e.g. a smooth morphism between complex analytic spaces),
we have a natural transformation
\begin{equation}\label{eq:smpurity}
f^*[r] \to f^!.
\end{equation}
This is a consequence of \cite[Proposition 3.1.11]{KS13}, which constructs a natural transformation
\begin{equation}\label{eq:purity}
  f^*(-) \otimes f^!\bQ_{Y} \to f^!(-)
\end{equation}
for general $f$.

Now consider a Cartesian diagram of topological spaces
\[
\xymatrix{
& X' \ar[r]^{f'} \pbcorner \ar[d]^{g'} & Y' \ar[d]^{g}\\
& X \ar[r]^{f} & Y.
}
\]
For a choice of the natural transformations $\eta$ in \eqref{eq:*unit}–\eqref{eq:!*2} and a choice of the sheaf operation $a \in \{(-)^*, (-)_*, (-)^!, (-)_!\}$,
we can verify the commutativity of $\eta$ and the base change maps for $a$. For example, if we take $\eta$ to be the unit map
$\id \to f_* f^*$ and $a = (-)^!$, the following diagram commutes:
\[
\xymatrix{
g^! \ar[r] \ar[d] & g^! {f}_* f^* \\
f'_* {f'}^* g^! \ar[r]^{} & f'_* {g'} ^! f^*. \ar[u]^-{\simd}
}
\]
Now assume that there exists an isomorphism $f^! \bQ_Y  \cong \bQ_X[r]$ for some integer $r$ and the following composition
\begin{align}\label{eq:upco}
\bQ_{X'}[r] \cong {g'}^*{\bQ_{X}}[r] \cong {g'}^* f^! {\bQ_{Y}} \to {f'}^! g^* {\bQ_{Y}} \cong {f'}^! {\bQ_{Y'}}
\end{align}
is isomorphic. Then we can see that for each choice of $a \in \{(-)^*, (-)_*, (-)^!, (-)_!\}$ the natural transformation \eqref{eq:smpurity} commutes with the base change maps for $a$.
For example if we take $a = (-)^!$, the following diagram commutes:
\[
\xymatrix{
{f'}^*[r] g^! \ar[r] \ar[d] & {f'}^! g^! \ar[d]^{\simd} \\
{g'}^! f^*[r] \ar[r] & {g'}^! f^!.
}
\]
Now remove the assumption that \eqref{eq:upco} is isomorphic and take
$a \in \{(-)_*, (-)_!\} $ and $b \in \{(-)^*, (-)^!\}$.
Let $G_a \colon D^+(Y') \to D^+(Y)$ (resp. $G_b \colon D^+(Y) \to D^+(Y')$) be the functor corresponding to $a$ (resp. $b$).
Then we can see that the base change maps for the composition transformation $G_aG_b$ commute with the map \eqref{eq:smpurity}.
For example if we take $a = (-)_*$ and $b = (-)^*$, or $a = (-)_!$ and $b = (-)^!$, we have the following commutative diagrams
\begin{align}\label{eq:puritybcc}
  \begin{split}
\xymatrix{
f^*[r] g_* g^*  \ar[r] \ar[d]& f^! g_* g^*\\
g'_* {f'}^*[r] g^*  \ar[d]_-{\simd}  & g'_* {f'}^! g^* \ar[u]^-{\simd} \\
g'_* {g'}^* f^*[r] \ar[r] & g'_* {g'}^* f^!\ar[u]
} \hspace{20pt}
\xymatrix{
f^*[r] g_! g^!  \ar[r] \ar[d]_-{\simd}& f^! g_! g^!\\
g'_! {f'}^*[r] g^! \ar[d]_-{}  & g'_! {f'}^! g^! \ar[u]^-{} \\
g'_! {g'}^! f^*[r] \ar[r] & g'_! {g'}^! f^!. \ar[u]^-{\simd}
}
\end{split}
\end{align}

\subsection{Vanishing cycle and nearby cycle functors}

Let $X$ be a complex analytic space and $u \colon X \to \bC$ be a complex analytic function.
We write
\[
X_0 \coloneqq u^{-1}(0), \ X_{>0} \coloneqq \{z \in X \mid \Re(u(z)) >0 \}, \ X_{\leq 0} \coloneqq X \setminus X_{>0}.
\]
We define the vanishing cycle functor and the nearby cycle functor
\[
\psi_{u}, \varphi_u \colon D^+(X) \to D^+(X_0)
\]
 by the formulas
\begin{align*}
\psi_u &\coloneqq (X_0 \hookrightarrow X)^* (X_{>0} \hookrightarrow X)_* (X_{>0} \hookrightarrow X)^*, \\
 \varphi_u &\coloneqq (X_0 \hookrightarrow X_{\leq 0})^* (X_{\leq 0} \hookrightarrow X)^!.
\end{align*}
If we are given a morphism $q \colon T \to X$ from another complex analytic space, we have natural transformations
\begin{align}
  \psi_u q_* \to {q_0}_* \psi_{u \circ q}&,\ \ \  \varphi_u q_* \to {q_0}_* \varphi_{u \circ q}, \label{eq:nvbc1} \\
  {q_0}_! \psi_{u \circ q} \to \psi_u q_! &,\ \ \  {q_0}_! \varphi_{u \circ q} \to \varphi_u q_!, \label{eq:nvbc2}\\
  q_0^* \psi_{u} \to \psi_{u \circ q} q^* &,\ \ \ q_0^* \varphi_{u} \to \varphi_{u \circ q} q^*, \label{eq:nvbc3}\\
 \psi_{u \circ q} q^! \to  q_0^! \psi_{u} &,\ \ \ \varphi_{u \circ q} q^! \to  q_0^! \varphi_{u} \label{eq:nvbc4},
\end{align}
where the map $q_0 \colon  T_0 = (u \circ q)^{-1}(0) \to X_0$ is induced from $q$.
All of these maps are defined by combining the base change maps \eqref{eq:bc1}–\eqref{eq:bc6}.
We call these maps the base change maps for the nearby and vanishing cycle functor.
The four maps in \eqref{eq:nvbc1} and \eqref{eq:nvbc2} are isomorphic when $q$ is proper,
and the four maps in \eqref{eq:nvbc3} and \eqref{eq:nvbc4} are isomorphic when $q$ is smooth.

There is another way to define the nearby cycle and vanishing cycle functors as in \cite[\S 8.6]{KS13}.
Let $\bCu$ the universal cover of $\bC^* = \bC \setminus \{0 \}$, and $p \colon \bCu \to \bC$ the natural map.
Define objects $K_{\psi}', K_{\phi}' \in D^b(\bC)$ by
\begin{align*}
K_{\psi}' \coloneqq & p_! \bQ_{\bCu} \\
K_{\varphi}'  \coloneqq & \Cone(p_! \bQ_{\bCu} \xrightarrow{\mathrm{tr}} \bQ_{\bC})
\end{align*}
where the map $\mathrm{tr} \colon p_! \bQ_{\bCu} \to \bQ_{\bC}$ is defined by the composition
\[
p_! \bQ_{\bCu} \cong p_! p^! \bQ_{\bC} \to \bQ_{\bC}.
\]
For a given complex analytic function $u \colon X \to \bC$, we define functors
\[
\psi_{u}', \varphi_u' \colon D^+(X) \to D^+(X_0)
\]
 by
\begin{align*}
\psi_u' &\coloneqq (X_0 \hookrightarrow X)^* \sRHom(u^* K_{\psi}', -), \\
\varphi_u' &\coloneqq (X_0 \hookrightarrow X)^* \sRHom(u^* K_{\varphi}', -).
\end{align*}
Now we want to compare $\psi_u$ and $\psi_u'$, and $\varphi_u$ and $\varphi_u'$.
To do this, we first define objects $K_{\psi}, K_{\phi} \in D^b(\bC)$ by
\begin{align*}
K_{\psi} \coloneqq & (\bC_{>0} \hookrightarrow \bC)_! \bQ_{\bC_{>0}} \\
K_{\varphi}  \coloneqq & \Cone((\bC_{>0} \hookrightarrow \bC)_! \bQ_{\bC_{>0}} \to \bQ_{\bC}) \cong
(\bC_{\leq 0} \hookrightarrow \bC)_! \bQ_{\bC_{\leq 0}}.
\end{align*}
Then we have isomorphisms
\begin{align*}
(X_0 \hookrightarrow X)^* \sRHom(u^* K_{\psi}, -)
& \cong (X_0 \hookrightarrow X)^* \sRHom((X_{>0} \hookrightarrow X)_! \bQ_{X_{>0}}, -)  \\
& \cong (X_0 \hookrightarrow X)^* (X_{>0} \hookrightarrow X)_* \sRHom(\bQ_{X_{>0}}, (X_{>0} \hookrightarrow X)^*(-)) \\
& \cong (X_0 \hookrightarrow X)^* (X_{>0} \hookrightarrow X)_* (X_{>0} \hookrightarrow X)^* = \psi_u
\end{align*}
and
\begin{align*}
  (X_0 \hookrightarrow X)^* \sRHom(u^* K_{\phi}, -)
  & \cong (X_0 \hookrightarrow X)^* \sRHom((X_{\leq 0} \hookrightarrow X)_! \bQ_{X_{\leq 0}}, -)  \\
  & \cong (X_0 \hookrightarrow X)^* (X_{\leq 0} \hookrightarrow X)_* \sRHom(\bQ_{X_{\leq 0}}, (X_{\leq 0} \hookrightarrow X)^!(-)) \\
  & \cong (X_0 \hookrightarrow X_{\leq 0})^* (X_{\leq 0} \hookrightarrow X)^! = \varphi_u.
\end{align*}
If we pick an open immersion $j \colon \bC_{>0} \hookrightarrow \bCu$ over $\bC$,
the natural map $j_! \bQ_{\bC>0} \to \bQ_{\bCu}$ induces maps
\[
K_{\psi} \to K'_{\psi},\ \ \ K_{\varphi} \to K'_{\varphi}
\]
and hence maps
\[
\psi_u' \to \psi_u,\ \ \ \varphi_u' \to \varphi_u.
\]
For a constructible complex $\cF \in D_c^{b}(X)$, one can show that the maps
\begin{equation}\label{eq:nvisom}
\psi_u'(\cF) \to \psi_u(\cF),\ \ \ \varphi_u'(\cF) \to \varphi_u(\cF)
\end{equation}
are isomorphic (see \cite[Lemma 1.1.1, Example 1.1.3]{Sch03} for the detail).

Now consider the following natural short exact sequence of complexes on $\bC$:
\begin{equation}\label{eq:shex1}
\Delta_1 \colon 0 \to \bQ_{\bC} \to K_{\varphi}' \to K_{\psi}'[1] \to 0.
\end{equation}
Then the distinguished triangle $(X_0 \hookrightarrow X)^* \sRHom(u^* \Delta_1, -)$ is identified with
\begin{equation}\label{eq:nvdist1'}
\psi_u'[-1] \to \varphi_u' \to (X_0 \hookrightarrow X)^* \to \psi_u'.
\end{equation}
Next, consider the following short exact sequence of complexes on $\bC$:
\begin{equation}\label{eq:shex2}
\Delta_2 \colon 0 \to K_{\psi}'[1] \to  K_{\varphi}' \to \Cone((\bC^* \hookrightarrow \bC)_! \bQ_{\bC^*} \to \bQ_{\bC}) \to 0.
\end{equation}
Here the first map is defined by the following diagram
\[
\xymatrix{
p_! \bQ_{\bCu} \ar[r]^{1-T} \ar[d] & p_! \bQ_{\bCu} \ar[d] \\
0 \ar[r] & \bQ_{\bC}
}
\]
where $T$ is the monodromy operator: In other words, if we let $\alpha \colon \bCu \xrightarrow{\sim} \bCu$ denote the covering transformation over $\bC^*$ corresponding to a counterclockwise loop, the map $T$ is given by the map
 $\alpha_! \bQ_{\bCu} \cong \bQ_{\bCu}$.
The second map in $\Delta_2$ is defined by the following diagram:
\[
\xymatrix{
 p_! \bQ_{\bCu} \ar[d] \ar[r]^-{\mathrm{tr}} & (\bC^* \hookrightarrow \bC)_! \bQ_{\bC^*}  \ar[d] \\
 \bQ_{\bC} \ar[r]^{\id} & \bQ_{\bC}.
 }
\]
Since we have an isomorphism $\Cone((\bC^* \hookrightarrow \bC)_! \bQ_{\bC^*} \to \bQ_{\bC}) \cong \bQ_{0}$,
the distinguished triangle $(X_0 \hookrightarrow X)^* \sRHom(u^* \Delta_2, -)$ is identified with
\begin{equation}\label{eq:nvdist2'}
(X_0 \hookrightarrow X)^! \to \varphi_u' \to \psi_u'[-1] \to (X_0 \hookrightarrow X)^! [1].
\end{equation}
For a constructible complex $\cF \in D^b_c(X)$, the isomorphisms in \eqref{eq:nvisom} and the distinguished triangles
\eqref{eq:nvdist1'} and \eqref{eq:nvdist2'}  induces the following distinguished triangles
\begin{align}
\psi_u(\cF)[-1] \to  \varphi_u(\cF) &\to (X_0 \hookrightarrow X)^*\cF \to \psi_u(\cF) \label{eq:nvdist1} \\
(X_0 \hookrightarrow X)^!\cF \to \varphi_u(\cF) &\to \psi_u(\cF)[-1] \to (X_0 \hookrightarrow X)^! \cF [1] \label{eq:nvdist2}
\end{align}
It is clear that the map
\begin{equation}\label{eq:v*}
\varphi_u(\cF) \to (X_0 \hookrightarrow X)^*\cF
\end{equation}
 in the distinguished triangle \eqref{eq:nvdist1} is equal to the following composition
\[
\varphi_u(\cF) = (X_0 \hookrightarrow X_{\leq 0})^* (X_{\leq 0} \hookrightarrow X)^! \cF \to (X_0 \hookrightarrow X_{\leq 0})^* (X_{\leq 0} \hookrightarrow X)^* \cF \cong (X_0 \hookrightarrow X)^*\cF
\]
where the first map is \eqref{eq:!*2}, and the map
\begin{equation}\label{eq:!v}
 (X_0 \hookrightarrow X)^! \cF \to \varphi_u(\cF)
\end{equation}
 in the distinguished triangle \eqref{eq:nvdist2}
is equal to the following composition
\[
(X_0 \hookrightarrow X)^! \cF \cong (X_0 \hookrightarrow X_{\leq 0})^! (X_{\leq 0} \hookrightarrow X)^! \to (X_0 \hookrightarrow X_{\leq 0})^* (X_{\leq 0} \hookrightarrow X)^! = \varphi_u(\cF)
\]
where the second map is \eqref{eq:!*2}.

Let $q \colon T \to X$ be a morphism of complex analytic spaces and $u$ be a holomorphic function on $X$.
Then for each choice of $a \in  \{(-)^*, (-)_*, (-)^!, (-)_!\}$, one can show that the distinguished triangles \eqref{eq:nvdist1} and \eqref{eq:nvdist2} commute with the base change maps for $a$.
For example, if we take $a = (-)_*$, for a complex $\cF \in D^b_c(T)$ the following diagrams commute:
\begin{align}
\begin{split}
  \xymatrix@C=8pt{
  \psi_u(q_*\cF)[-1] \ar[r] \ar[d] 
  & \varphi_u(q_*\cF) \ar[r] \ar[d] 
  & (X_0 \hookrightarrow X)^*q_*\cF \ar[r] \ar[d]
  & \psi_u(q_*\cF) \ar[d] \\
  {q_0}_* \psi_{u \circ q}(\cF)[-1] \ar[r] 
  & {q_0}_* \varphi_{u \circ q}(\cF) \ar[r]  
  & {q_0}_* (T_0 \hookrightarrow T)^*\cF \ar[r]  
  & {q_0}_*\psi_{u \circ q}(\cF),
  }
  \end{split}
\end{align}

\begin{align}
\begin{split}
  \xymatrix@C=7pt{
  (X_0 \hookrightarrow X)^!q_*\cF \ar[r] \ar[d]^-{\simd} 
  & \varphi_u(q_*\cF)  \ar[r] \ar[d] 
  & \psi_u(q_*\cF)[-1] \ar[r] \ar[d] 
  &  \ar[d]^-{\simd} (X_0 \hookrightarrow X)^!q_*\cF[1] \\
  {q_0}_* (T_0 \hookrightarrow T)^!\cF \ar[r] 
  & {q_0}_* \varphi_{u \circ q}(\cF) \ar[r] 
  & {q_0}_* \psi_{u \circ q}(\cF)[-1] \ar[r] 
  & {q_0}_* (T_0 \hookrightarrow T)^!\cF[1],
  }
\end{split}
\end{align}

Now let $Z$ be a complex analytic space and take a constructible object $\cF \in D^b_c(Z)$.
Denote by $\pi \colon Z \times \bC \to Z$ and $u \colon Z \times \bC \to \bC$ the projections.
Then it is clear that $\varphi_u(\pi^* \cF) = 0$.
We have the following statement whose proof will be given in \S \ref{ssec:nearbyid}:

\begin{prop}\label{prop:nearbyid}
Denote by $i \colon Z \times \{ 0 \} \hookrightarrow Z \times \bC$ the natural inclusion.
Then the following composition
\[
\cF \cong i^* \pi^* \cF \xrightarrow{\sim} \psi_u(\pi^* \cF) \xrightarrow{\sim} i^! \pi^* \cF [2] \cong i^! \pi^! \cF \cong \cF
\]
is the identity map.
\end{prop}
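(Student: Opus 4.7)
The plan is to exhibit the composition as a natural endomorphism $\eta_\cF$ of the identity functor on $D^b_c(Z)$, reduce by base change to the universal case $Z = \pt$ and $\cF = \bQ$, and then compute that case directly.

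Each of the five arrows is functorial in $\cF$: the outer isomorphisms come from $\pi \circ i = \id_Z$; the two middle isomorphisms arise from the distinguished triangles \eqref{eq:nvdist1} and \eqref{eq:nvdist2} using $\varphi_u(\pi^*\cF) = 0$; the smoothness isomorphism is \eqref{eq:smpurity}. Using the compatibilities collected in Section \ref{sec:app} --- the commutation of \eqref{eq:smpurity} with base change recorded in \eqref{eq:puritybcc}, the compatibility of the triangles \eqref{eq:nvdist1} and \eqref{eq:nvdist2} with the base change maps for $(-)^*$, $(-)^!$, $(-)_*$, $(-)_!$ discussed in Section \ref{sect:nattrans}, and smooth base change for $\psi_u$ from \eqref{eq:nvbc3} --- one verifies that $\eta$ commutes with pullback along smooth morphisms. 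A standard dévissage, reducing a constructible complex to extensions of shifts of constant sheaves on smooth strata, then reduces the statement to the universal case $Z = \pt$, $\cF = \bQ$.

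In this case, $\pi \colon \bC \to \pt$, $i \colon \{0\} \hookrightarrow \bC$ and $u = \id_\bC$; since $u$ is a submersion, $\varphi_u(\bQ_\bC) = 0$. Each object in the composition is canonically identified with $\bQ$, and each arrow becomes a scalar. Three of the scalars are manifestly $1$: the two adjunction isomorphisms for $\pi \circ i = \id_\pt$, and the smoothness isomorphism, which comes from the standard complex orientation of $\bC$. The map $i^* \bQ_\bC \to \psi_u(\bQ_\bC) = i^* j_* \bQ_{\bC_{>0}}$ (with $j \colon \bC_{>0} \hookrightarrow \bC$) is induced by the unit of adjunction, and on stalks sends $1 \in \bQ$ to $1 \in H^0(U \cap \bC_{>0}, \bQ) = \bQ$ (using that $U \cap \bC_{>0}$ is contractible for a small disk $U$ around $0$); hence it too is the identity.

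The only non-trivial scalar to compute is the connecting morphism $\psi_u(\bQ_\bC) \to i^! \bQ_\bC[2]$ coming from \eqref{eq:nvdist2}, and this is the main obstacle. I propose to identify it with the identity by invoking Verdier duality: the short exact sequences $\Delta_1$ from \eqref{eq:shex1} and $\Delta_2$ from \eqref{eq:shex2} defining the triangles \eqref{eq:nvdist1} and \eqref{eq:nvdist2} are dual to each other under the exchange of $\bC_{>0}$ with $\bC_{\leq 0}$, $j_!$ with $j_*$, and $(-)^*$ with $(-)^!$. Under this duality the connecting morphism of \eqref{eq:nvdist2} corresponds to the map $i^* \bQ_\bC \to \psi_u(\bQ_\bC)$ of \eqref{eq:nvdist1} just identified with the identity, and the duality is intertwined with the purity isomorphism $i^* \bQ_\bC \cong i^! \bQ_\bC[2]$. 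Alternatively, one traces directly through the construction of the first map $K_\psi'[1] \to K_\varphi'$ in $\Delta_2$ via the monodromy diagram and through the comparison \eqref{eq:nvisom}. Either route yields the identity, completing the proof.
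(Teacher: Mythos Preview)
Your reduction to the universal case $Z=\pt$, $\cF=\bQ$ is reasonable in spirit, though the d\'evissage step is more delicate than you indicate: naturality of $\eta$ alone does not force $\eta_\cF=\id$ from knowing it on the constituents of a triangle (the obstruction lies in $\Hom(B,A)$). A cleaner route---and the one the paper takes---is to observe that the entire composition is obtained by applying the functor $i^*\sRHom(u^*(-),\pi^*\cF)$ to a single map of kernels $\bQ_0[-2]\to K_\psi'\to\bQ_\bC$ on $\bC$, so identifying that map of kernels with the counit settles the proposition for all $Z$ and $\cF$ simultaneously.

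The genuine gap is your final scalar computation. The claim that $\Delta_1$ and $\Delta_2$ are Verdier dual is not correct: the Verdier dual of $K_\psi'=p_!\bQ_{\bCu}$ is $p_*\omega_{\bCu}$, whose stalks over $\bC^*$ are infinite products rather than the infinite direct sums appearing in $K_\psi'$, so $\bD\Delta_1$ is not $\Delta_2$. Even the duality the paper does establish for $\varphi_u$ in \eqref{eq:vandual} passes through the half-monodromy operator $T_\pi$, so one cannot read off the sign by a symmetry argument. Your alternative of ``tracing directly through the construction'' is exactly the content you have not supplied: the first map in $\Delta_2$ is built from $1-T$, and extracting the scalar requires actually unwinding this. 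The paper carries this out by applying $R\Gamma_c$, passing to the de~Rham resolution over $\bR$, and reducing to the identity $f(0)=-\int_{\bCu}\omega_2$ for compactly supported forms $(f,\omega_1,\omega_2)$ with $df=p_*\omega_1$ and $-d\omega_1=\omega_2-\alpha^*\omega_2$, which is Stokes's theorem. You need to supply a computation of this kind.
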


Now we discuss the commutativity of the vanishing cycle functor and the Verdier duality functor following \cite{Mas16}.
Firstly, for $\alpha, \beta \in \bR$ such that $0 \leq \beta - \alpha < 2 \pi$, we define a closed subset
$D^{[\alpha, \beta]} \subset \bC$ by
\[
D^{[\alpha, \beta]} \coloneqq \{ z = R \cdot e^{i \theta} \in \bC \mid R \in \bR_{\geq 0}, \theta \in [\alpha, \beta]  \}.
\]
For a complex analytic space $X$ and a regular function $u$ on it, we define a functor $\varphi_u^{[\alpha, \beta]} \colon D^+(X) \to D^+(X_0)$ by
\[
\varphi_{u}^{[\alpha, \beta]} \coloneqq (X_0 \hookrightarrow u^{-1}(D^{[\alpha, \beta]}))^* (u^{-1}(D^{[\alpha, \beta]}) \hookrightarrow X)^!.
\]
Clearly, we have $\varphi_u = \varphi_u^{[\pi/2, 3\pi/2]}$.
For real numbers $\alpha, \alpha', \beta' , \beta \in \bR$ such that $\alpha \leq \alpha' \leq \beta' \leq \beta$ and $\beta - \alpha < 2 \pi$ and a constructible complex $\cF \in D^b_c (X)$, the natural transform \eqref{eq:!*2} induces a map
\begin{equation}\label{eq:vanindep}
\varphi_u^{[\alpha', \beta']}(\cF) \to \varphi_u^{[\alpha, \beta]}(\cF)
\end{equation}
which is an isomorphism (see \cite[Lemma 1.1.1, Example 1.1.3]{Sch03}).
We define an isomorphism $T_{\pi} \colon \varphi_u \cong \varphi_u^{[3\pi/2, 5\pi/2]}$ by the composition
\[
\varphi_u = \varphi_u^{[\pi/2, 3\pi/2]} \xleftarrow{\sim} \varphi_u^{[3\pi/2, 3\pi/2]} \xrightarrow{\sim }\varphi_u^{[3\pi/2, 5\pi/2]}.
\]
This map is a half of the monodromy operator.
Define closed subsets $X_{\geq 0}$ and $X_{\Re = 0}$ of $X$ by
\[
X_{\geq 0} \coloneqq  \{z \in X \mid \Re (u(z)) \geq 0 \}, \ X_{\Re = 0} \coloneqq  \{z \in X \mid \Re (u(z)) =  0 \}.
\]
We define an isomorphism
\begin{equation}\label{eq:vandual}
\bD_{X_0} \circ \varphi_u \cong \varphi_u \circ \bD_{X}
\end{equation}
 by the following composition:
\begin{align*}
  \bD_{X_0} \circ \varphi_u & = \bD_{X_0} (X_0 \hookrightarrow X_{\leq 0})^* (X_{\leq 0} \hookrightarrow X)^! \\
  &\cong \bD_{X_0} (X_0 \hookrightarrow X_{\Re = 0})^* (X_{\Re = 0} \hookrightarrow X_{\leq 0})^* (X_{\leq 0} \hookrightarrow X)^! \\
  &\cong (X_0 \hookrightarrow X_{\Re = 0})^! (X_{\Re = 0} \hookrightarrow X_{\leq 0})^! (X_{\leq 0} \hookrightarrow X)^* \bD_{X} \\
  &\cong (X_0 \hookrightarrow X_{\Re = 0})^! (X_{\Re = 0} \hookrightarrow X_{\geq 0})^* (X_{\geq 0} \hookrightarrow X)^! \bD_{X} \\
  &\cong (X_0 \hookrightarrow X_{\Re = 0})^* (X_{\Re = 0} \hookrightarrow X_{\geq 0})^* (X_{\geq 0} \hookrightarrow X)^! \bD_{X} \\
  &\cong (X_0 \hookrightarrow  X_{\geq 0})^* (X_{\geq 0} \hookrightarrow X)^! \bD_{X} \\
  &= \varphi_u^{[3\pi/2, 5\pi/2]} \circ \bD_{X} \\
  &\cong \varphi_u \circ \bD_{X}
\end{align*}
where the third isomorphism is the inverse of the base change map \eqref{eq:bc6} which is isomorphic thanks to \cite[Lemma 2.2]{Mas16}, the fourth isomorphism is the natural map \eqref{eq:!*2} which is isomorphic thanks to
\cite[Lemma 2.1]{Mas16}, and the final isomorphism is $T_{\pi}^{-1} \bD_X$.
For a constructible complex $\cF \in D_c^b(X)$ we can show that the following diagram commutes:
  \begin{align}\label{eq:vannatdual}
    \begin{split}
      \xymatrix{
    \bD_{X_0} (X_0 \hookrightarrow X)^* \cF \ar[r] \ar[d]^-{\simd}   \ar[d] & \bD_{X_0} \varphi_u(\cF)  \ar[d]^-{\simd}_-{\eqref{eq:vandual}}   \\
    (X_0 \hookrightarrow X)^! \bD_{X} \cF \ar[r] &    \varphi_u (\bD_{X} \cF)
      }
    \end{split}
  \end{align}
  where the top (resp. bottom) horizontal arrow is the map \eqref{eq:v*} (resp. the map \eqref{eq:!v}).
 This is essentially a consequence of the Verdier self-duality of the natural transform \eqref{eq:!*2}.

\subsection{Fourier--Sato transforms}\label{ssec:FS}

Let $X$ be a topological space with an $\bR^+$-action.
A sheaf $\cF$ on $X$ is called conic if for each $x \in X$, the restriction $\cF |_{\bR^+ \cdot x}$ is locally constant.
We let $D^+_{\bR^+}(X) \subset D^+(X)$ denote the full subcategory consisting of objects whose cohomology sheaves are conic.
Now let $Z$ be a topological space and consider a vector bundle $E$ on $Z$. We equip $E$ with the scaling $\bR^+$-action.
Denote by $\pi_E \colon E \to Z$ the projection and $0_E \colon Z \hookrightarrow E$ the zero section.
Then for an object $\cF \in D^+_{\bR^+}(E)$, it is shown in \cite[Proposition 3.7.5]{KS13} that the following maps
\begin{align}
  {\pi_E}_* \cF \to {\pi_E}_* {0_E}_* 0_E^* \cF \cong 0_E^* \cF \label{eq:homotopyinv1} \\
  0_E^! \cF \cong {\pi_E}_! {0_E}_! 0_E^! \cF \to {\pi_E}_! \cF \label{eq:homotopyinv2}
\end{align}
are isomorphic.

Now we define the Fourier--Sato transform following \cite[\S 3.7]{KS13}.
For applications in this paper, we always work with complex vector bundles.
Let $Z$ be a topological space and $E$ be a complex vector bundle over $Z$.
Denote by $E^{\vee}$ the (complex) dual vector bundle.
Define closed subsets $P, P' \subset E \oplus E^{\vee}$ by
\begin{align*}
P &\coloneqq \{(v, w) \in E \oplus E^{\vee} \mid \Re w(v) \geq 0  \}  \\
P' &\coloneqq \{(v, w) \in E \oplus E^{\vee} \mid \Re w(v) \leq 0  \}.
\end{align*}
Consider the following diagram:
\[
\xymatrix{
{} 
& P' \ar@{^{(}->}[d]_-{\iota_{P'}}
& \\
E
& E \oplus E' \ar[l]_-{p} \ar[r]^-{p'}
& E' \\
{}
& P \ar@{_{(}->}[u]^-{\iota_{P}}
&
}
\]
where $p$ and $p'$ are natural projections and $\iota_P$ and $\iota_{P'}$ are natural inclusions.
We define four functors $\FS_E, \FS_E', \FSS_E, \FSS_E' \colon D^+_{\bR^+}(E) \to \DR^+(E^{\vee})$ by
\begin{align}\label{eq:FSdef}
\begin{split}
  \FS_E \coloneqq p'_! {\iota_{P'}}_* \iota_{P'}^* p^* &,\ \ \ \
  \FS_E' \coloneqq p'_* {\iota_{P}}_! \iota_{P}^! p^* \\
  \FSS_E \coloneqq p'_* {\iota_{P'}}_! \iota_{P'}^! p^! &, \ \  \ \ 
  \FSS_E' \coloneqq p'_! {\iota_{P}}_* \iota_{P}^* p^!.
  \end{split}
\end{align}
The functor $\FS_E$ is called the Fourier--Sato transform.
It is shown in \cite[Theorem 3.7.9]{KS13} that these functors are equivalences.
Now we discuss the relations between these functors.
Let $i_P \colon P \cap P' \hookrightarrow P$ and $i_{P'} \colon P \cap P' \hookrightarrow P'$ the natural inclusions and
consider the following composition
\begin{align*}
\FS_E &= p'_! {\iota_{P'}}_* \iota_{P'}^* p^* \\
      &\xleftarrow{\sim} p'_! {\iota_{P'}}_* {i_{P'}}_! i_{P'}^! \iota_{P'}^* p^* \\
      &\xleftarrow{\sim} p'_! {\iota_{P}}_! {i_{P}}_* i_{P}^* \iota_{P}^! p^* \\
      &\xrightarrow{\sim} p'_* {\iota_{P}}_! {i_{P}}_* i_{P}^* \iota_{P}^! p^* \\
      &\xleftarrow{\sim} p'_* {\iota_{P}}_!  \iota_{P}^! p^* = \FS_E'
\end{align*}
where the first isomorphism follows from \eqref{eq:homotopyinv2},
the second isomorphism follows from \cite[Excercise II.2]{KS13} or \cite[Lemma 2.2]{Mas16},
the third isomorphism follows from \cite[Lemma 3.7.6]{KS13} and the final isomorphism follows from
\eqref{eq:homotopyinv1}.
Similarly, we have a natural isomorphism
\[
\FSS_E \cong \FSS_E'.
\]
Let $a \colon E \xrightarrow{\sim} E$ be the map multiplying $-1$, and for $\cF \in \DR^+(E)$ we write
$\cF^a \coloneqq a^* \cF$.
It is clear that $\FS_E(\cF^a) \cong \FS_E(\cF)^a$.
We write $\FS_E^a \coloneqq \FS_E \circ (-)^a$ and similarly for $\FS'^a_E$, $\FSS_E^a$ and $\FSS'^a_E$.
We have
\begin{align*}
\FSS'^a_E \cong p'_! {\iota_{P'}}_* \iota_{P'}^* p^!
       \cong p'_! {\iota_{P'}}_* \iota_{P'}^* p^*[2 \rank E] = \FS_E[2 \rank E]
\end{align*}
where the second isomorphism is defined  using the orientation on $E$ induced by the complex structure.
Therefore we have isomorphisms
\begin{equation}\label{eq:FSs}
\FS_E' \cong \FS_E  \cong \FSS'^a_E[-2 \rank E] \cong \FSS^a_E[-2 \rank E].
\end{equation}
By definition the functor $\FS_E$ is naturally left adjoint (hence quasi-inverse) to $\FSS_{E^{\vee}}$ and the functor $\FS_E'$ is naturally right adjoint (hence quasi-inverse) to $\FSS_{E^{\vee}}'$ after choosing isomorphisms $p^! \cong p^*[2 \rank E]$ and ${p'}^! \cong {p'}^*[2 \rank E]$ induced by the orientations of $E$ and $E^{\vee}$ coming from the complex structures.
Now consider the following diagram
\[
\xymatrix{
\FSS_{E^{\vee}} \circ \FS_E  \ar[d]_-{\simd} & \ar@{=}[d] \id  \ar[l]_-{\sim} \\
\FSS_{E^{\vee}}' \circ \FS_E' \ar[r]^-{\sim} & \id.
}
\]
Since we always equip complex vector bundles with the orientations induced from the complex structures,
we see that the above diagram commutes up to the sign $(-1)^{\rank E}$ using \cite[Remark 3.7.11]{KS13}.
This implies that the following diagram also commutes up to the sign $(-1)^{\rank E}$:
\begin{equation}\label{eq:FSeta}
  \begin{split}
\xymatrix{
\FSS_{E^{\vee}} \circ \FS_E  \ar[d]_-{\simd} & \ar@{=}[d] \id \ar[l]_-{\sim} \\
(\FS_{E^{\vee}}^a[2 \rank E]) \circ (\FSS_E^a[-2 \rank E]) \ar[r]^-{\sim} & \id.
}
\end{split}
\end{equation}
For $\cF \in \DR^+(E)$, we define an isomorphism
\begin{equation}\label{eq:eta}
\eta(\cF) \colon \FS_{E^{\vee}} \FS_E(\cF) \cong \cF^a[-2 \rank E]
\end{equation}
by the following composition:
\[
\FS_{E^{\vee}} \FS_E(\cF) \cong \FS_{E^{\vee}} \FSS_E^a(\cF) [-2 \rank E] \cong \cF^a[-2 \rank E].
\]
Note that this map differs from the following composition
\[
\FS_{E^{\vee}} \FS_E(\cF) \cong \FSS_{E^{\vee}}^a \FS_E(\cF) [-2 \rank E] \cong \cF^a[-2 \rank E]
\]
by the sign $(-1)^{\rank E}$.
This implies that the following diagram commutes up to the sign $(-1)^{\rank E}$:
\begin{equation}\label{eq:FSetaa}
  \begin{split}
\xymatrix@C=60pt{
\FS_E \FS_{E^\vee} \FS_E(\cF) \ar[r]^-{\FS_E (\eta(\cF))} \ar[d]_-{\eta(\FS_E(\cF))} & \FS_E(\cF^a)[-2 \rank E] \\
\FS_E(\cF^a)[-2 \rank E]. \ar@{=}[ru] &
}
\end{split}
\end{equation}

Now we discuss base change maps for Fourier--Sato transforms.
Let $f \colon Z' \to Z$ be a continuous map between topological spaces and $E$ be a complex vector bundle on $Z$.
We let $E_{Z'}$ denote the base change of $E$ to $Z'$, and $f_{E} \colon E_{Z'} \to E$ and
$f_{E^{\vee}} \colon E_{Z'}^{\vee} \to E^{\vee}$ denote the base changes of $f$.
Using base change maps \eqref{eq:bc1}–\eqref{eq:bc6}, we can construct natural maps
\begin{align}
\FS_E \circ {f_E}_* &\to {f_{E^{\vee}}}_* \circ \FS_{E_{Z'}}, \\
{f_{E^\vee}^*} \circ \FS_E &\to \FS_{E_{Z'}} \circ {f_{E}^*}\label{eq:FSbc^*}, \\
{f_{E^{\vee}}}_! \circ \FS_{E_{Z'}} &\to \FS_E \circ {f_E}_! \label{eq:FSbc^!}, \\
\FS_{E_{Z'}} \circ {f_{E}^!} &\to {f_{E^\vee}^!} \circ \FS_E.
\end{align}
It is shown in \cite[Proposition 3.7.13]{KS13} that all these maps are isomorphic.
We call these maps the base change maps for the Fourier--Sato transform.
We have similar isomorphisms for $\FS'$, $\FSS$, and $\FSS'$, and these isomorphisms commute with the maps in \eqref{eq:FSs}.

Now let $Z$ be a topological space, $E_1$ and $E_2$ be a complex vector bundle over $Z$, and $g \colon E_1 \to E_2$ be a
morphism of vector bundles.
Denote by $^t g\colon E_2^\vee \to E_1^\vee $ the dual map of $g$.
We define $\dim g \coloneqq \rank E_1 - \rank E_2$.
We have an isomorphism
\begin{equation} \label{eq:vectconst}
  g^! \bQ_{E_2} \cong \bQ_{E_1}[2 \dim g]
\end{equation} defined by the following composition
\begin{equation*}
  g^! \bQ_{E_2} \cong g^! \pi_{E_2}^! \bQ_{Z}[-2 \rank E_2] \cong \pi_{E_1}^! \bQ_{Z} [-2 \rank E_2] \cong \bQ_{E_1}[2 \dim g]
\end{equation*}
where the first and third isomorphisms are defined by the orientations on $E_1$ and $E_2$ induced from the complex structures.
For $\cF \in \DR^+(E_1)$ and $\cG \in \DR^+(E_2)$ we will recall the construction of following four isomorphisms defined in \cite[Proposition 3.7.14]{KS13}:
\begin{align}
 \FS_{E_2}(g_! \cF) &\cong {}^t g^* \FS_{E_1}(\cF) \label{eq:FSbcc1}  \\
 \FS_{E_2}(g_* \cF) &\cong {}^t g^! \FS_{E_1}(\cF) [2 \dim g] \label{eq:FSbcc2} \\
 \FS_{E_1}(g^! \cG) &\cong {}^t g_* \FS_{E_2}(\cG) \label{eq:FSbcc3} \\
 \FS_{E_1}(g^* \cG) &\cong {}^t g_! \FS_{E_2}(\cG) [-2 \dim g] \label{eq:FSbcc4}.
\end{align}
Consider the following commutative diagram:
\begin{equation}\label{eq:diagFS}
\begin{split}
\xymatrix{
E_1^\vee & E_2 ^{\vee} \ar[l]_-{^t g} & \\
E_1 \oplus E_1 ^{\vee} \ar[u]^-{p_1'} \ar[rd]_-{p_1} & E_1 \oplus E_2 ^{\vee} \pbcorner \pocorner \ar[l]^-{\widetilde{^t g}} \ar[r]_-{\tilde{g}} \ar[u]_-{\widetilde{p_1'}} \ar[d]_-{\widetilde{p_2}} & E_2 \oplus E_2 ^{\vee} \ar[d]_-{p_2} \ar[lu]_-{p_2'} \\
& E_1 \ar[r]_-{g} & E_2.
}
\end{split}
\end{equation}
Here the upper left and lower right squares are Cartesian.
Now define closed subsets $P_1, P_1' \subset E_1 \oplus E_1^{\vee}$ and  $P_2, P_2' \subset E_2 \oplus E_2^{\vee}$ as before,
and closed subsets $\widetilde{P}, \widetilde{P'} \subset E_1 \oplus E_2 ^{\vee}$ by
\begin{align*}
\widetilde{P} &\coloneqq \{(v, w) \in E_1 \oplus E_2 ^{\vee} \mid \Re w(g(v)) \geq 0 \} \\
\widetilde{P'} &\coloneqq \{(v, w) \in E_1 \oplus E_2 ^{\vee} \mid \Re w(g(v)) \leq 0 \}.
\end{align*}
We let $\iota_{P_1} \colon P_1 \hookrightarrow E_1 \oplus E_1^{\vee}$ denote the inclusion map and define the maps $\iota_{P_1'}, \iota_{P_2}, \iota_{P_2'}, \iota_{\widetilde{P}}, \iota_{\widetilde{P'}}$ in the same way.
The isomorphism \eqref{eq:FSbcc1} is defined by the following composition
\begin{align*}
  \FS_{E_2}(g_! \cF) &= {p'_2}_! {\iota_{P_2'}}_* \iota_{P_2'}^* p_2^* g_! \cF \\
                     &\cong  {p'_2}_! {\iota_{P_2'}}_* \iota_{P_2'}^* \tilde{g}_! \widetilde{p_2}^* \cF \\
                     &\cong  {p'_2}_! \tilde{g}_! {\iota_{\widetilde{P'}}}_* \iota_{\widetilde{P'}}^*  \widetilde{p_2}^* \cF \\
                     &\cong \widetilde{p_1'}_! {\iota_{\widetilde{P'}}}_* \iota_{\widetilde{P'}}^* \widetilde{^t g}^* p_1^* \cF \\
                     &\cong \widetilde{p_1'}_! \widetilde{^t g}^* {\iota_{P_1'}}_* \iota_{P_1'}^*  p_1^* \cF \\
                     &\cong {}^t g^* {p_1'}_! {\iota_{P_1'}}_* \iota_{P_1'}^*  p_1^* \cF  \\
                     &= {}^t g^* \FS_{E_1}(\cF).
\end{align*}
Here all isomorphisms are defined by using the base change maps.
The isomorphism \eqref{eq:FSbcc2} is defined by the following composition
\begin{align*}
  \FS_{E_2}(g_* \cF) \cong \FS_{E_2}'(g_* \cF)
        &= {p'_2}_* {\iota_{P_2}}_! \iota_{P_2}^! p_2^* g_* \cF \\
        &\cong  {p'_2}_* {\iota_{P_2}}_! \iota_{P_2}^! \tilde{g}_* \widetilde{p_2}^* \cF  \\
        &\cong  {p'_2}_* \tilde{g}_* {\iota_{\widetilde{P}}}_! \iota_{\widetilde{P}}^!  \widetilde{p_2}^* \cF  \\
        &\cong \widetilde{p_1'}_* {\iota_{\widetilde{P}}}_! \iota_{\widetilde{P}}^! \widetilde{^t g}^* p_1^* \cF \\
        &\cong \widetilde{p_1'}_* {\iota_{\widetilde{P}}}_! \iota_{\widetilde{P}}^! \widetilde{^t g}^! p_1^* \cF[2 \dim g] \\
        &\cong \widetilde{p_1'}_* \widetilde{^t g}^! {\iota_{P_1}}_! \iota_{P_1}^!  p_1^* \cF[2 \dim g] \\
        &\cong {}^t g^! {p_1'}_* {\iota_{P_1}}_! \iota_{P_1}^!  p_1^* \cF  [2 \dim g]\\
        &= {}^t g^! \FS_{E_1}'(\cF) [2 \dim g] \cong {}^t g^! \FS_{E_1}(\cF) [2 \dim g].
\end{align*}
Here the first and final isomorphisms are defined in \eqref{eq:FSs}, the fifth isomorphism is defined by using \eqref{eq:vectconst} and \eqref{eq:smpurity} and other isomorphisms are defined using the base change maps.
The isomorphism \eqref{eq:FSbcc3} is defined by the following composition
\begin{align*}
  \FS_{E_1}(g^! \cG) &\cong \FS_{E_1} g^! \FS_{E_2^{\vee}} \FS_{E_2} (\cG^a)[2 \rank E_2] \\
                     &\cong \FS_{E_1} \FS_{E_1^{\vee}} {}^t g_* \FS_{E_2} (\cG^a)[2 \rank E_1] \\
                     &\cong  {}^t g_* \FS_{E_2} (\cG)
\end{align*}
where the first and third isomorphisms are defined using $\eta(-)$ in \eqref{eq:eta} and the
second isomorphism is defined using \eqref{eq:FSbcc2}.
Similarly, the isomorphism \eqref{eq:FSbcc4} is defined by the following composition
\begin{align*}
  \FS_{E_1}(g^* \cG) &\cong \FS_{E_1} g^* \FS_{E_2^{\vee}} \FS_{E_2} (\cG^a)[2 \rank E_2] \\
                     &\cong \FS_{E_1} \FS_{E_1^{\vee}} {}^t g_! \FS_{E_2} (\cG^a)[2 \rank E_2] \\
                     &\cong  {}^t g_! \FS_{E_2} (\cG)[-2 \dim g].
\end{align*}
Assume further that we are given a continuous map $f \colon Z' \to Z$.
Denote by $g_{Z'} \colon (E_1)_{Z'} \to (E_2)_{Z'} $ the base change of $g$.
Then for each choice of $a \in \{(-)^*,\ab (-)_*,\ab (-)^!,\ab (-)_!\}$, one can see that the isomorphisms
\eqref{eq:FSbcc1}–\eqref{eq:FSbcc4} commute with base change maps for $a$.
For example if we choose $a = (-)^*$ and \eqref{eq:FSbcc3}, we see that the following diagram commutes:
\begin{equation}\label{eq:FSbccFSbc}
\begin{split}
\xymatrix{
f_{E_1^\vee}^* \FS_{E_1}(g^! \cG) \ar[r]^-{\sim} \ar[d]_-{\simd} & f_{E_1^\vee}^* {}^t g_* \FS_{E_2} (\cG) \ar[d] \\
\FS_{(E_1)_Z}(f_{E_1}^* g^! \cG) \ar[d] &  {}^t {g_{Z'}}_* f_{E_2^\vee}^* \FS_{E_2} (\cG) \ar[d]_-{\simd} \\
\FS_{(E_1)_Z}(g_{Z'}^! f_{E_2}^* \cG) \ar[r]^-{\sim} & {}^t {g_{Z'}}_*  \FS_{(E_2)_Z} (f_{E_2}^* \cG) }.
\end{split}
\end{equation}

We can show that the maps \eqref{eq:FSbcc1} and \eqref{eq:FSbcc2} are associative in the following sense.
Let $E_3$ be another vector bundle over $Z$ and $h \colon E_2 \to E_3$ be morphism of vector bundles.
For $\cF \in \DR^+(E_1)$ ,
the following two diagrams commute:

\begin{equation}\label{eq:FSbcc1ass}
    \xymatrix@C=30pt{
    \FS_{E_3} ((h \circ g)_! \cF) \ar[r]^-{\sim} \ar[dd]^-{\simd}
    & \FS_{E_3} (h_! \circ g_! \cF) \ar[d]^-{\simd} \\
    {}
    & ({}^t h)^* \FS_{E_2} ( g_! \cF) \ar[d]^-{\simd} \\
    {({}^t g \circ {}^t h)^* \FS_{E_1}(\cF)} \ar[r]^-{\sim}
    & ({}^t h)^* ({}^t g)^* \FS_{E_1} ( \cF)
    }
\end{equation}
\begin{equation}\label{eq:FSbcc2ass}
                 \xymatrix@C=30pt{
    \FS_{E_3} ((h \circ g)_* \cF) \ar[r]^-{\sim} \ar[dd]^-{\simd}
    & \FS_{E_3} (h_* \circ g_* \cF) \ar[d]^-{\simd} \\
    {}
    & ({}^t h)^! \FS_{E_2} ( g_* \cF)[2 \dim h] \ar[d]^-{\simd} \\
    {({}^t g \circ {}^t h)^! \FS_{E_1}(\cF)}[2 \dim g + 2 \dim h] \ar[r]^-{\sim}
    & ({}^t h)^! ({}^t g)^! \FS_{E_1} ( \cF)[2 \dim g + 2 \dim h].
    }
\end{equation}
We omit the proof as it is a routine argument in sheaf theory.
We have similar statements for maps \eqref{eq:FSbcc3} and \eqref{eq:FSbcc4}.

Now we discuss the compatibility between Fourier--Sato transforms and natural transformations discussed in \S\ref{sect:nattrans}.
\begin{prop}\label{prop:FSpure}
  The following diagram commutes:
  \[
  \xymatrix{
  \FS_{E_2}(g_! \cF)\ar[d]^-{\simd} \ar[r] & \FS_{E_2}(g_* \cF) \ar[d]^-{\simd} \\
  {}^t g^*\FS_{E_1}(\cF) \ar[r] & {}^t g^!\FS_{E_1}(\cF)[-2 \dim ^t g].
  }
  \]
  Here the upper horizontal arrow is the map \eqref{eq:!*1} and the lower horizontal arrow is defined by using \eqref{eq:vectconst} and \eqref{eq:smpurity}.
\end{prop}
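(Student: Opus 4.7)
The plan is to unfold the two vertical isomorphisms \eqref{eq:FSbcc1} and \eqref{eq:FSbcc2} into their explicit six-step chains of base-change maps on the diagram \eqref{eq:diagFS}, and then to verify that the horizontal natural transformation $g_! \to g_*$ of \eqref{eq:!*1} commutes with each individual step. This reduces the global square to a local statement on $E_1 \oplus E_2^\vee$ about the interaction between the natural comparison $\FS \to \FS'$ and the purity map for $\widetilde{^t g}$.

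Concretely, I would stack the two chains side by side: \eqref{eq:FSbcc1} applied to $g_! \cF$ and \eqref{eq:FSbcc2} applied to $g_* \cF$. The two chains are parallel, and the only differences are that one uses ${\iota_{P_2'}}_* \iota_{P_2'}^*$ (and its translates along the Cartesian squares) while the other uses ${\iota_{P_2}}_! \iota_{P_2}^!$, and that \eqref{eq:FSbcc2} inserts a purity step $\widetilde{^t g}^* \to \widetilde{^t g}^![2\dim g]$ via \eqref{eq:vectconst} and \eqref{eq:smpurity}. By the general compatibility principles laid out in \S\ref{sect:nattrans}, together with the squares \eqref{eq:puritybcc}, the transformation $g_! \to g_*$ commutes with all four flavors of base change and with \eqref{eq:smpurity}, so every rectangle in the expanded diagram commutes.

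After this reduction, what remains is a local square on $E_1 \oplus E_2^\vee$: the composite obtained from the transition ${\iota_{\widetilde{P'}}}_* \iota_{\widetilde{P'}}^* \to {\iota_{\widetilde{P}}}_! \iota_{\widetilde{P}}^!$ together with $\widetilde{p_1'}_! \to \widetilde{p_1'}_*$ must agree with the one coming from purity. Unwinding the comparison chain preceding \eqref{eq:FSs} that produces $\FS \cong \FS'$, which factors through $p'_! {\iota_{P \cap P'}}_* \iota_{P \cap P'}^* p^*$, this becomes an assertion about the inclusion $P \cap P' \hookrightarrow E \oplus E^\vee$ of a real hyperplane bundle, where the maps \eqref{eq:!*2} and \eqref{eq:smpurity} agree essentially by inspection, since $P \cap P'$ is a smooth submanifold of real codimension one with a trivial relative orientation.

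The main obstacle I anticipate is sign and orientation bookkeeping: \eqref{eq:FSbcc2} is constructed through $\FS'$ rather than directly through $\FS$, so to compare it with \eqref{eq:FSbcc1} one must carefully track the complex orientations on $E_1$, $E_2$ and the relative orientation of $\widetilde{^t g}$ entering the purity map, in the spirit of the sign calculation underlying $\eta(\cF)$ in \eqref{eq:eta} and the $(-1)^{\rank E}$ appearing in \eqref{eq:FSeta}. Provided these orientations are matched consistently at each stage, the remainder is a routine but lengthy diagram chase using the compatibilities collected in \S\ref{sect:nattrans} and \S\ref{ssec:FS}.
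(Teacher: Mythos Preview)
Your overall strategy---unfold both vertical isomorphisms into their base-change chains on the diagram \eqref{eq:diagFS} and check commutativity step by step---is the same as the paper's. Where you diverge is in how you handle the comparison between the $\FS$-chain (built from $\iota_{P'}$) and the $\FS'$-chain (built from $\iota_P$). You propose to track the zigzag $\FS \cong \FS'$ of \eqref{eq:FSs} through all the base-change steps and then reduce to a ``local square'' about $P\cap P'$ as a real hyperplane bundle. That last step, as written, is vague: there is no direct natural transformation ${\iota_{\widetilde{P'}}}_*\iota_{\widetilde{P'}}^* \to {\iota_{\widetilde{P}}}_!\iota_{\widetilde{P}}^!$, and the appeal to ``trivial relative orientation'' does not clearly produce the map \eqref{eq:smpurity} for ${}^t g$ that appears on the bottom of the target square.

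The paper sidesteps this difficulty by introducing auxiliary functors
\[
\FSn_E \coloneqq p'_! {\iota_{P'}}_*\iota_{P'}^* {\iota_{P}}_!\iota_P^! p^*,\qquad
\FSn'_E \coloneqq p'_* {\iota_{P'}}_*\iota_{P'}^* {\iota_{P}}_!\iota_P^! p^*,
\]
which carry both kernels $P$ and $P'$ simultaneously. Then $\FSn_E \cong \FS_E$ via the counit ${\iota_P}_!\iota_P^! \to \id$, $\FSn'_E \cong \FS'_E$ via the unit $\id \to {\iota_{P'}}_*\iota_{P'}^*$, and the comparison $\FSn_E \to \FSn'_E$ is the single transformation $p'_! \to p'_*$. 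One then constructs analogues of \eqref{eq:FSbcc1} and \eqref{eq:FSbcc2} for $\FSn$ and $\FSn'$ by the same recipe; compatibility with the original isomorphisms holds because (co)units commute with base change (\S\ref{sect:nattrans}). This reduces the proposition to the commutativity of \eqref{eq:!*1} and \eqref{eq:smpurity} with base change and with \eqref{eq:puritybcc}, exactly the compatibilities you invoke---but now without having to unwind the four-step zigzag of \eqref{eq:FSs} at every stage, and without any sign or orientation bookkeeping. Your anticipated difficulty with signs simply does not arise in the paper's argument.
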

The proof of this proposition will be given in \S \ref{ssec:FSpure}.

\begin{prop}\label{prop:FSunit}
  Consider following diagrams
  \[
  \xymatrix{
  \FS_{E_2}(\cG) \ar[r] \ar@/_18pt/[rdd] & \FS_{E_2}(g_* g^* \cG) \ar[d]^-{\simd} \\
  & {}^t g^! \FS_{E_1}( g^* \cG)[2 \dim g] \ar[d]^-{\simd} \\
  & {}^t g^! \, {}^t g_! \FS_{E_2}( \cG)
  } \hspace{20pt}
  \xymatrix{
  \FS_{E_2}(g_! g^! \cG) \ar[d]^-{\simd} \ar[r]  & \FS_{E_2}( \cG) \\
  {}^t g^*\FS_{E_1}( g^! \cG) \ar[d]^-{\simd} &  \\
  {}^t g^* \, {}^t g_* \FS_{E_2}( \cG). \ar@/_18pt/[ruu] &
  }
  \]
  Then the left diagram commutes up to the sign $(-1)^{\rank E_1}$ and the right diagram commutes up to the sign $(-1)^{\rank E_2}$.
\end{prop}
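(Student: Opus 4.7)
The plan is to unwind both vertical compositions using the explicit constructions of the Fourier--Sato base-change isomorphisms \eqref{eq:FSbcc3} and \eqref{eq:FSbcc4} recalled above, which themselves pass through \eqref{eq:FSbcc1}, \eqref{eq:FSbcc2} and the involution isomorphism $\eta$ of \eqref{eq:eta}. Two ingredients conspire to produce the claims: (i) a sign-free compatibility of \eqref{eq:FSbcc1}, \eqref{eq:FSbcc2} with adjunction units/counits (a Beck--Chevalley statement for the Cartesian squares in \eqref{eq:diagFS}), and (ii) the sign discrepancy of $\eta$ recorded in \eqref{eq:FSeta} and \eqref{eq:FSetaa}.

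For the left diagram, the first step is to establish the intermediate compatibility that, under \eqref{eq:FSbcc2}, applying $\FS_{E_2}$ to the unit $\cG \to g_* g^* \cG$ of the $(g^*, g_*)$-adjunction corresponds without sign to the unit of the $({}^t g_!, {}^t g^!)$-adjunction on ${}^t g \colon E_2^\vee \to E_1^\vee$ applied to $\FS_{E_2}(\cG)$, after identifying $\FS_{E_1}(g^*\cG)[2 \dim g]$ with ${}^t g_! \FS_{E_2}(\cG)$. This is a routine six-functor diagram chase: the construction of \eqref{eq:FSbcc2} consists of base-change isomorphisms across the Cartesian squares of \eqref{eq:diagFS}, each of which intertwines the corresponding units/counits of the standard adjunctions, and the single application of the orientation isomorphism \eqref{eq:vectconst} is unit-compatible as it is an isomorphism.

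The second step is to compare the two identifications of $\FS_{E_1}(g^*\cG)$ with ${}^t g_!\FS_{E_2}(\cG)[-2 \dim g]$: the one implicit in step one, and the one used in the diagram via \eqref{eq:FSbcc4}. The latter is defined in the paper as a composite using $\eta$ twice: once at $E_2$ (to rewrite $\cG$ as $\FS_{E_2^\vee}\FS_{E_2}(\cG^a)[2\rank E_2]$) and once at $E_1^\vee$ (via $\FS_{E_1}\FS_{E_1^\vee}(-) \cong (-)^a[-2\rank E_1]$). By \eqref{eq:FSeta} and \eqref{eq:FSetaa}, toggling the chosen convention of $\eta$ at a vector bundle $E$ changes the resulting identification by $(-1)^{\rank E}$. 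A careful tracking shows that the $E_2$-instance cancels against the identification already employed in step one, while the $E_1$-instance survives once, yielding the overall sign $(-1)^{\rank E_1}$.

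The main obstacle is the bookkeeping in step two, especially in the presence of the $(-)^a$ involution and the shift-producing orientation isomorphism \eqref{eq:vectconst}: individual sign contributions are controlled by \eqref{eq:FSeta} and \eqref{eq:FSetaa}, but matching conventions across the composite requires care. The right diagram is proved by the mirror argument: unwind \eqref{eq:FSbcc3} via $\eta$, use the analogous sign-free compatibility of \eqref{eq:FSbcc1} with the counit $g_! g^! \to \id$, and track signs. By the symmetry which exchanges the roles of $E_1$ and $E_2$ (and of upper/lower star/shriek), the surviving sign is now $(-1)^{\rank E_2}$ rather than $(-1)^{\rank E_1}$.
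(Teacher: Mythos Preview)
Your two-step outline matches the paper's strategy, but step~1 has a real gap. You assert a sign-free compatibility of \eqref{eq:FSbcc2} with the unit $\cG \to g_* g^* \cG$ ``after identifying $\FS_{E_1}(g^*\cG)[2\dim g]$ with ${}^t g_! \FS_{E_2}(\cG)$'', calling this a routine chase through the base-change maps in \eqref{eq:diagFS}. But \eqref{eq:FSbcc2} only tells you how $\FS$ interacts with $g_*$; it says nothing about $g^*$, so it cannot by itself produce the identification $\FS_{E_1}(g^*\cG)[2\dim g] \cong {}^t g_! \FS_{E_2}(\cG)$ you need. You acknowledge this by distinguishing ``the identification implicit in step one'' from \eqref{eq:FSbcc4}, yet you never say what the implicit one \emph{is}, and no construction of it drops out of the Cartesian squares in \eqref{eq:diagFS} alone.

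The paper resolves this by replacing the $\eta$-based inverse $\FS_{E^\vee}\FS_E \cong (-)^a[-2\rank E]$ with the genuine adjunction $\FSS_{E^\vee}\FS_E \cong \id$: passing from one to the other costs exactly the sign in \eqref{eq:FSeta}. After this replacement the needed identification is the $\FSS$-analog of \eqref{eq:FSbcc2} for ${}^t g$, which \emph{is} built purely from base-change maps, and then every arrow in the resulting square is a composite of adjunction units (this is where the construction of \eqref{eq:bc5} in \cite[Proposition~3.1.9(ii)]{KS13} is invoked). This is the content of the lemma for $g^! g_!$ and its corollary for $g_* g^*$ preceding the proof of the proposition. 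Your step~2 then corresponds to the final paragraph of the paper's proof, where the comparison with \eqref{eq:FSbcc4} is made precise via Lemma~\ref{lem:FSeta_*} and two applications of \eqref{eq:FSetaa}; your conclusion that only a single $(-1)^{\rank E_1}$ survives is correct, but in the paper this is computed rather than asserted.
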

The proof of this proposition will be given in \S \ref{ssec:FSunit}.

%============================================================

\section{Specialization functor and specialization map}\label{sec:sp}

Let $X$ be a separated scheme of finite type over $\Spec \bC$ and $Z$ be a closed subscheme of $X$.
We let $C_{Z/X}$ denote the normal cone of $Z$ in $X$.
In this section, we introduce the specialization functor
\[
\Sp_{Z/X} : D^+(X) \to D^+(C_{Z/X})
\] following Verdier \cite{Ver81},
and define the specialization map for the Borel--Moore homology groups
\[
\Sp^{\BM}_{Z/X} \colon \HBM_*(X) \to \HBM_*(C_{Z/X})
\]
using the specialization functor.
Then we prove that the specialization map $\Sp^{\BM}_{Z/X}$ is compatible with the specialization map
of Chow groups defined in Fulton's book \cite[\S 5]{Ful84}.

\subsection{Specialization functors}

Let us recall the definition and basic properties of the specialization functor introduced by Verdier \cite{Ver81}.
Let $X$ be a separated scheme of finite type over $\Spec \bC$, and $Z$ be a closed subscheme.
Define schemes $M_{Z/X}$ and $M_{Z/X}^0$ by
\begin{align*}
M_{Z/X} &\coloneqq \Bl_{Z \times \{0\}}(X \times \bA^1) \\
M_{Z/X}^0 &\coloneqq M_{Z/X} \setminus \Bl_{Z}(X).
\end{align*}
Then we have the following commutative diagram:
\begin{equation}\label{eq:spdiag}
\begin{tikzcd}
Z \arrow[r, hookrightarrow] \arrow[d, hookrightarrow] & Z \times \bA^1 \arrow[r, hookleftarrow] \arrow[d, hookrightarrow, "i_{Z \times \bA^1}"] & Z\times (\bA^1 \setminus \{0\}) \arrow[d, hookrightarrow] &\\
C_{Z/X} \arrow[r, hookrightarrow, "i_{C_{Z/X}}"] \arrow[d] & M_{Z/X}^0 \arrow[r, hookleftarrow, "j"] \arrow[d, "p"] & X \times (\bA^1 \setminus \{0\}) \arrow[d] \arrow[r,  "\pr_1"] & X \\
\{ 0 \} \arrow[r, hookrightarrow] & \bA^1 \arrow[r, hookleftarrow] & (\bA^1 \setminus \{0\}) &
\end{tikzcd}
\end{equation}

\begin{dfn}[{{\cite[\S 8]{Ver81}}}]\label{def:specialization}
  The specialization functor
  \[
  \Sp_{Z/X} \colon D^+(X) \to \DR^+(C_{Z/X})
  \] is defined by the following formula:
    \[
    \Sp_{Z/X}(\cF) \coloneqq \psi_{p} j_! \pr_1^*(\cF).
    \]
  Here $\bR^+$-action on $C_{Z/X}$ is defined by the scaling of fibers.
\end{dfn}

The followings are basic properties of the specialization functor we use in this paper.

\begin{prop}[{\cite[\S9]{Ver81}}]\label{prop:spres}
  Let $X$ be a separated scheme of finite type over $\Spec \bC$ and $Z \subset X$ be a closed subscheme.
  For a constructible complex $\cF \in D_c^b(X)$, we have following isomorphisms
  \begin{align}
    \Sp_{Z/X}(\cF) |_Z &\cong \cF |_Z, \label{eq:spres*} \\
    \Sp_{Z/X}(\cF) |_Z^! &\cong \cF |_Z^! \label{eq:spres!} .
  \end{align}
%  \begin{enumerate}
%    \item[(i)] The specialization functor $\Sp_{Z/X}$ commutes with the Verdier duality functor:
%    \[
%    \bD_{C_{Z/X}} \circ \Sp_{Z/X} \cong \Sp_{Z/X} \circ \bD_{X}.
%    \]
%    \item[(i)] For a constructible complex $\cF \in \Dc^b(X)$, we have
%    \[
%    \Sp_{Z/X}(\cF) |_Z \cong \cF |_Z.
%    \]
%  \end{enumerate}
\end{prop}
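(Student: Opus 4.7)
The plan is to reduce the computation of $\Sp_{Z/X}(\cF)|_Z$ to the nearby cycle of a trivial family. The geometric input is the closed immersion $i_{Z \times \bA^1} \colon Z \times \bA^1 \hookrightarrow M_{Z/X}^0$ from \eqref{eq:spdiag}, along which $p$ pulls back to the projection $\pr_2 \colon Z \times \bA^1 \to \bA^1$ and whose restriction over $0 \in \bA^1$ recovers the zero section $\iota_0 \colon Z \hookrightarrow C_{Z/X}$. Base change \eqref{eq:bc4} applied to the Cartesian square
\[
\begin{tikzcd}
Z \times (\bA^1 \setminus \{0\}) \arrow[r, "\iota'"] \arrow[d, hookrightarrow, "j'"'] & X \times (\bA^1 \setminus \{0\}) \arrow[d, hookrightarrow, "j"] \\
Z \times \bA^1 \arrow[r, hookrightarrow, "i_{Z \times \bA^1}"'] & M_{Z/X}^0
\end{tikzcd}
\]
yields $i_{Z \times \bA^1}^*(j_!\pr_1^*\cF) \cong j'_!\pi^*(\cF|_Z)$, where $\pi \colon Z \times (\bA^1 \setminus \{0\}) \to Z$ is the projection. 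The nearby cycle base change \eqref{eq:nvbc3} for $i_{Z \times \bA^1}$, combined with restriction to the zero section, produces a natural morphism
\[
\Sp_{Z/X}(\cF)|_Z \longrightarrow \psi_{\pr_2}(j'_!\pi^*(\cF|_Z))|_Z.
\]
Since $\psi_{\pr_2}$ depends only on the restriction of its argument to $\{\Re > 0\} \subset Z \times \bA^1$, where $j'_!\pi^*(\cF|_Z)$ agrees with the pullback from $Z$ along the trivial projection $\tilde\pi \colon Z \times \bA^1 \to Z$, Proposition \ref{prop:nearbyid} identifies the right-hand side with $\cF|_Z$.

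The main obstacle is to verify that the natural morphism above is an isomorphism, since the base change \eqref{eq:nvbc3} is not automatically an isomorphism for the closed immersion $i_{Z \times \bA^1}$. The key point is that $\psi_p$ only sees the restriction of $j_!\pr_1^*\cF$ to the open locus $\{\Re p > 0\} \subset X \times (\bA^1 \setminus \{0\}) \subset M_{Z/X}^0$, and on this locus $i_{Z \times \bA^1}$ is the base change of the smooth projection $\pr_1 \colon X \times (\bA^1 \setminus \{0\}) \to X$ along $Z \hookrightarrow X$. The smooth base change together with the conic structure of $\Sp_{Z/X}(\cF)$ on $C_{Z/X}$ (via the homotopy invariance \eqref{eq:homotopyinv1}) then forces the map to be an isomorphism.

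The second isomorphism \eqref{eq:spres!} follows by the parallel argument with $(-)^*$ replaced by $(-)^!$: use base change \eqref{eq:nvbc4} in place of \eqref{eq:nvbc3}, the analogous identification of $i_{Z \times \bA^1}^!(j_!\pr_1^*\cF)$ as $j'_!\pi^*(\cF|_Z^!)$ up to shift (which follows from the smoothness of $\pr_1$ and the Gysin isomorphism $\pr_1^! \cong \pr_1^*[2]$), and the $(-)^!$-part of Proposition \ref{prop:nearbyid}. Alternatively, \eqref{eq:spres!} may be deduced from \eqref{eq:spres*} by combining \eqref{eq:vandual} with the interchange of $\bD_X$ and $\Sp_{Z/X}$ on constructible complexes.
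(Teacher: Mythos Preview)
Your overall strategy—restrict along $i_{Z\times\bA^1}\colon Z\times\bA^1\hookrightarrow M_{Z/X}^0$ and reduce to the nearby cycle of a trivial family—is exactly the paper's. The problem is the step you yourself flag as ``the main obstacle'': showing that the base change morphism
\[
\Sp_{Z/X}(\cF)|_Z \;\longrightarrow\; \psi_{\pr_2}\bigl(i_{Z\times\bA^1}^*\,j_!\pr_1^*\cF\bigr)
\]
coming from \eqref{eq:nvbc3} is an isomorphism. Your justification (``$\psi_p$ only sees $\{\Re p>0\}$, where $i_{Z\times\bA^1}$ is a base change of the smooth map $\pr_1$; smooth base change together with the conic structure then forces the map to be an isomorphism'') does not work. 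The functor $\psi_p(\cG)=i_{C_{Z/X}}^*(j_{>0})_*(\cG|_{>0})$ depends on how $M_{Z/X}^0$ degenerates near $C_{Z/X}$, not merely on the sheaf over $\{\Re p>0\}$; the smoothness of $\pr_1$ on that open locus tells you nothing about commuting $(j_{>0})_*$ past restriction to $Z\times\bA^1$. Homotopy invariance \eqref{eq:homotopyinv1} identifies $\iota_0^*$ with $\pi_{C_{Z/X},*}$ on conic sheaves, but that does not furnish the missing base change either.

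If you take the cone of your map, what you actually need is: whenever $\cF'|_Z=0$ one has $\Sp_{Z/X}(\cF')|_Z=0$ (apply this to $\cF'=j_{X\setminus Z,!}(\cF|_{X\setminus Z})$). This vanishing is precisely the lemma the paper isolates and proves first, by embedding $X$ locally into an affine space with $Z$ cut out linearly and invoking \cite[Theorem~4.2.3]{KS13}. With that in hand, the paper replaces $\cF$ by $\cF_Z=i_{Z,*}(\cF|_Z)$ and then uses the \emph{proper} base change \eqref{eq:nvbc1} for the closed immersion $i_{Z\times\bA^1}$—which \emph{is} automatic—rather than \eqref{eq:nvbc3}. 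The $!$-statement is handled symmetrically via $\Gamma_Z(\cF)$. Your argument is missing this vanishing lemma, which is the real content; the same gap also undermines your parallel $!$-argument, since the identification $i_{Z\times\bA^1}^!(j_!\pr_1^*\cF)\cong j'_!\pi^*(\cF|_Z^!)$ fails without it (there is no reason for $(Z\hookrightarrow M_{Z/X}^0)^!(j_!\pr_1^*\cF)$ to vanish a priori). Your Verdier-duality alternative for \eqref{eq:spres!} is fine once \eqref{eq:spres*} is established, but it does not rescue the first part.
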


\begin{proof}
    Firstly note that we have natural morphisms
    \[
    \Sp_{Z/X} (\cF)|_Z \to \Sp_{Z/X}(\cF_Z)|_Z, \ \ \Sp_{Z/X}(\Gamma_Z(\cF))|^!_Z \to \Sp_{Z/X}(\cF)|^!_Z.
    \]
    We claim that these maps are isomorphisms.
    To do this, it is enough to prove that for complexes $\cF \in D^b_c(X)$ with $\cF |_Z = 0$ we have $\Sp_{Z/X}(\cF) |_Z = 0$,
    and for $\cF \in D^b_c(X)$ with $\cF |_Z^! = 0$ we have $\Sp_{Z/X}(\cF) |_Z^! = 0$.
    By shrinking $X$ and embedding it into an affine space so that $Z$ is cut out by linear equations, we may assume $X$ and $Z$ are smooth.
    Then these claims follow from \cite[Theorem 4.2.3]{KS13}.
    
    Now consider the composition
    \begin{equation}\label{eq:sp*}
    \Sp_{Z/X}(\cF_Z) |_Z \to \psi_{p \circ i_{Z \times \bA^1}} ((Z \times \bA^1 \to Z)^* (\cF |_Z)) \to \cF |_Z
  \end{equation}
    where the first map is \eqref{eq:nvbc1} and the second map is \eqref{eq:v*}. It is clear that these maps are isomorphisms, hence we obtain the isomorphism \eqref{eq:spres*}.
    Similarly, we have isomorphisms
      \begin{equation}\label{eq:sp!}
    \Sp_{Z/X}(\Gamma_Z(\cF)) |^! _Z \cong \psi_{p \circ i_{Z \times \bA^1}} ((Z \times \bA^1 \to Z)^! (\cF |_Z^![-2])) \cong \cF |_Z^!
    \end{equation}
    which imply the isomorphism \eqref{eq:spres!}.

  %  For (i), first note that we have an isomorphism
  %  \begin{align}\label{eq:spalt}
  %      \Sp_{Z/X}(\cF) \cong \psi_{p} i_* \pr_1^*(\cF)[1].
  %  \end{align}
  %  since the nearby cycle functor does not depend on the value at the central fibre.
  %  Then the claim follows from the following composition of isomorphisms
  %  \begin{align*}
  %  \bD_{C_{Z/X}} \Sp_{Z/X} (\cF) &= \bD_{C_{Z/X}}(\psi_{p} i_! \pr_1^*(\cF)[1]) \\
%                                  &\cong \psi_{p} \bD_{M_{Z/X}^0}(i_! \pr_1^*(\cF)[1]) \\
%                                  &\cong \psi_{p} i_* \bD_{X \times (\bA^1 \setminus \{ 0 \})}( \pr_1^*(\cF)[1]) \\
  %                                &\cong \psi_{p} i_* \pr_1^!(\bD_X(\cF))[-1] \\
  %                                &\cong \psi_{p} i_* \pr_1^*(\bD_X(\cF))[1] \\
  %                                &\cong \Sp_{Z/X} (\bD_X(\cF)).
  %  \end{align*}
  %  Here the first isomorphism follows from the commutativity of the nearby cycle functor and the Verdier duality functor, the second isomorphism follows form the fact that the map $\pr_1$ is an oriented submersion of (real) relative dimension 2, and the last isomorphism follows from \eqref{eq:spalt}.
\end{proof}

\subsection{Specialization maps}\label{ssec:spmap}

Consider the natural map defined in \eqref{eq:purity}
\[
\Sp_{Z/X}(\bQ_X) |_Z \otimes \omega_{Z/ C_{Z/X}} \to \Sp_{Z/X}(\bQ_X) |_Z^!.
\]
Using the proposition above, this map is identified with a map
\begin{equation}\label{eq:spmapp}
\omega_{Z/ C_{Z/X}} \to \omega_{Z/ X}.
\end{equation}
The isomorphism \eqref{eq:homotopyinv2} implies $\omega_{Z/ C_{Z/X}} \cong (C_{Z/X} \to Z)_! \bQ_{C_{Z/X}}$ hence by adjunction we obtain an element
\[
  e_{Z/X} \in \mH^0(C_{Z/X} \to X).
\]
Define a map
\[
\Sp_{Z/X}^{\BM} \colon \HBM_*(X) \to \HBM_*(C_{Z/X})
\]
by composing $e_{Z/X}$ to elements in $\HBM_*(X) = \mH^{-*}(X \to \Spec \bC)$.
Now we compare this specialization map for Borel--Moore homology groups and the specialization map for Chow groups
\[
\Sp_{Z/X}^{\Chow} \colon A_*(X) \to A_*(C_{Z/X})
\]
defined in Fulton's book \cite[\S 5]{Ful84}.
\begin{thm}\label{thm:spcomm}
  The following diagram commutes:
  \[
  \xymatrix@C=50pt{
  A_*(X) \ar[r]^-{\Sp_{Z/X}^{\Chow}} \ar[d]^-{\cl_{X}} & A_*(C_{Z/X}) \ar[d]^-{\cl_{C_{Z/X}}} \\
  \HBM_*(X) \ar[r]^-{\Sp_{Z/X}^{\BM}}  & \HBM_*(C_{Z/X})
  }
  \]
  where $\cl_X$ and $\cl_{C_{Z/X}}$ are cycle maps.
\end{thm}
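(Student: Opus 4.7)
The plan is to reduce the theorem, via $\bQ$-linearity and naturality, to the statement $\Sp^{\BM}_{Z/X}([X]_{\BM}) = \cl([C_{Z/X}])$ for irreducible $X$, and then to identify the specialized class as the Borel--Moore Gysin pullback of the fundamental class of $M_{Z/X}^0$ along the principal Cartier divisor $C_{Z/X} \subset M_{Z/X}^0$ cut out by $p$.

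First I would verify that $\Sp^{\BM}_{Z/X}$ commutes with proper pushforward along a closed immersion $i \colon V \hookrightarrow X$. The induced closed immersion $I \colon M_{Z \cap V/V}^0 \hookrightarrow M_{Z/X}^0$ restricts over $0 \in \bA^1$ to $i_C \colon C_{Z \cap V/V} \hookrightarrow C_{Z/X}$ and sits in a Cartesian diagram with the projections to $\bA^1$. Applying proper base change for the nearby cycle functor (so that the map in \eqref{eq:nvbc1} is an isomorphism for the proper $I$), together with the naturality of the purity transform \eqref{eq:puritybcc} and of the adjunction isomorphism $\omega_{Z/C_{Z/X}} \cong (C_{Z/X} \to Z)_! \bQ_{C_{Z/X}}$ coming from \eqref{eq:homotopyinv2}, one obtains $\Sp^{\BM}_{Z/X} \circ i_* = (i_C)_* \circ \Sp^{\BM}_{Z \cap V/V}$. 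Since Fulton's classical argument gives the analogous identity for $\Sp^{\Chow}$, and $A_*(X)$ is generated by classes $i_*[V]^V$ with $V \subset X$ irreducible, we are reduced to the case $\alpha = [X]_{\BM}$ with $X$ irreducible.

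Next I would reinterpret $\Sp^{\BM}_{Z/X}([X]_{\BM})$ as a Gysin pullback on $M_{Z/X}^0$. Writing $j \colon X \times \bG_m \hookrightarrow M_{Z/X}^0$ for the dense open immersion and $i_0 \colon C_{Z/X} \hookrightarrow M_{Z/X}^0$ for the special fiber, I expect, by unwinding the definition of $e_{Z/X}$ using the isomorphisms \eqref{eq:spres*}--\eqref{eq:spres!} of Proposition \ref{prop:spres} and the distinguished triangle \eqref{eq:nvdist2} relating $\psi_p$ to $i_0^!$, that $\Sp^{\BM}_{Z/X}([X]_{\BM}) = i_0^! \, [M_{Z/X}^0]_{\BM}$, the Borel--Moore Gysin pullback along the principal Cartier divisor $C_{Z/X} = p^{-1}(0)$ applied to the fundamental class of $M_{Z/X}^0$. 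The standard identity $i_0^! [M_{Z/X}^0]_{\BM} = [C_{Z/X}]_{\BM}$ for Gysin pullback along a regular embedding then yields $\Sp^{\BM}_{Z/X}([X]_{\BM}) = \cl([C_{Z/X}])$, matching Fulton's prescription $\Sp^{\Chow}_{Z/X}([X]) = [C_{Z/X}]$.

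The hardest part of this plan is the Gysin identification in the previous paragraph. One must carefully verify that the purity and adjunction isomorphisms involved, namely the homotopy invariance \eqref{eq:homotopyinv2} over the conic space $C_{Z/X}$, the map \eqref{eq:!v} from the distinguished triangle for $\psi_p$, and the purity transform \eqref{eq:purity}, compose to match the standard Gysin map for the Cartier divisor $C_{Z/X}$ without spurious shifts or signs. Once that is established, the theorem follows from the well-known compatibility of the Borel--Moore Gysin pullback for a regular embedding with Fulton's refined Gysin pullback at the level of cycle classes.
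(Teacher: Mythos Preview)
Your proposal is correct and follows essentially the same route as the paper: reduce by proper pushforward to the fundamental class of an irreducible variety, then identify $\Sp^{\BM}$ with the nearby-cycle Gysin pullback $_\psi i_{C_{Z/X}}^!$ on the deformation space $M_{Z/X}^0$, and finally invoke compatibility of this Gysin map with Fulton's. The one substantive difference is that the paper does not treat the last compatibility as ``well-known'' but proves it directly (Proposition~\ref{prop:clGysin}): the nearby-cycle Gysin $_\psi i^!$ is not \emph{a priori} the standard Borel--Moore Gysin for a Cartier divisor, and the paper reduces via normalization and proper pushforward to the model case $X=\bA^n$, $u=z_1$, where the identification is the content of Proposition~\ref{prop:nearbyid}. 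This is exactly the sign/normalization check you flag as the hardest part, so your plan and the paper's are aligned; just be aware that the ``well-known compatibility'' you cite at the end is, in this nearby-cycle formulation, the nontrivial step.
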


This theorem will be proved in two steps:
firstly we construct the Gysin pullback for Borel--Moore homology to a principal divisor using the nearby cycle functor and compare it with the Gysin pullback for Chow groups (Proposition \ref{prop:clGysin}), and then reduce the theorem to this statement.

Let $X$ be a separated scheme of finite type over $\Spec \bC$ and $u \colon X \to \bA^1$ be a regular function.
Write $D \coloneqq u^{-1}(0)$ and we let $i \colon D \hookrightarrow X$ denote the inclusion map.
The composition of morphisms in \eqref{eq:nvdist1} and \eqref{eq:nvdist2} defines the following map
\[
(D \hookrightarrow X)^* \bQ_X \to \psi_u(\bQ_X) \to (D \hookrightarrow X)^! \bQ_X[2]
\]
hence an element $e_u \in \mH^2(D \to X)$.
The element $e_u$ defines a map
\[
 _\psi i^! \colon \HBM_*(X) \to \HBM_{* - 2}(D).
\]
Now let $f \colon X' \to X$ be a proper map and write $D' \coloneqq (u \circ f)^{-1} (0)$. 
We let $i' \colon D' \hookrightarrow X'$ denote the inclusion map.
\begin{lem}
The following diagram commutes:
\begin{align}\label{eq:Gysinpush}
  \begin{split}
  \xymatrix{
  \HBM_*(X') \ar[r]^-{_{\psi} {i'}^!} \ar[d]^{f_*}  &   \HBM_{*-2}(D') \ar[d]^{(f|_{D'})_*}  \\
  \HBM_*(X) \ar[r]^-{_{\psi} i^!} & \HBM_{*-2}(D).
  }
\end{split}
\end{align}
\end{lem}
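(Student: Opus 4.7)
Write $\bar f \coloneqq f|_{D'}$, and unpack the map $_\psi i^!$ as follows. An element $\alpha \in \HBM_*(X)$ is a morphism $\bQ_X[*] \to \omega_X$ in $D^+(X)$, and $_\psi i^!(\alpha)$ is the composition
\[
\bQ_D[*-2] \xrightarrow{e_u[*-2]} i^! \bQ_X[*] \xrightarrow{i^!(\alpha)} i^! \omega_X = \omega_D,
\]
where $e_u$ arises from splicing the connecting morphisms of the triangles \eqref{eq:nvdist1} and \eqref{eq:nvdist2}. Proper pushforward $f_* \colon \HBM_*(X') \to \HBM_*(X)$ sends $\alpha \colon \bQ_{X'} \to \omega_{X'}[-*]$ to the composition $\bQ_X \to f_* \bQ_{X'} \to f_* \omega_{X'}[-*] \to \omega_X[-*]$, where the first arrow is the unit and the last is the counit of $f_! \dashv f^!$ combined with $f_* = f_!$.

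The key claim is that under the proper base change isomorphisms $i^* f_* \cong \bar f_* (i')^*$ (from \eqref{eq:bc4}, using $f_* = f_!$) and $i^! f_* \cong \bar f_* (i')^!$ (from \eqref{eq:bc5}), together with the isomorphism $\psi_u f_* \cong \bar f_* \psi_{u \circ f}$ (the base change map \eqref{eq:nvbc1}, which is an isomorphism since $f$ is proper), the composition
\[
i^*(f_* \bQ_{X'}) \to \psi_u(f_* \bQ_{X'}) \to i^!(f_* \bQ_{X'})[2]
\]
is identified with $\bar f_*$ applied to
\[
(i')^* \bQ_{X'} \to \psi_{u \circ f}(\bQ_{X'}) \to (i')^! \bQ_{X'}[2].
\]
In other words, under the base change isomorphisms, the pullback of $e_u$ by $\bar f_*$ coincides with $\bar f_*(e_{u \circ f})$. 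This is exactly the statement that the two commutative diagrams displayed immediately after the triangles \eqref{eq:nvdist1} and \eqref{eq:nvdist2} (with the choice $a = (-)_*$) apply to $\cF = \bQ_{X'}$ and $q = f$.

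Granting this identification, the commutativity of \eqref{eq:Gysinpush} is a purely formal diagram chase: both routes produce, upon applying $R\Gamma(D,-)$, the same composition
\[
\bQ[*-2] \to R\Gamma(D, \bar f_* \bQ_{D'}[*-2]) \to R\Gamma(D, \bar f_* (i')^! \bQ_{X'}[*]) \to R\Gamma(D, \bar f_* \omega_{D'}) \to R\Gamma(D, \omega_D),
\]
where the middle arrow is induced by $\bar f_*(e_{u \circ f})$ and the last one uses the counit. The only nontrivial content is the identification of $e_u$ with the image of $e_{u \circ f}$ under base change, which is what the compatibility diagrams in Section \ref{sect:nattrans} provide. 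The main obstacle is therefore purely bookkeeping: keeping track of the base change isomorphisms and verifying that they patch together in the way claimed. No new ideas beyond the base-change compatibilities already established in the paper are required.
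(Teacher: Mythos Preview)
Your proposal is correct and follows essentially the same approach as the paper: both arguments reduce the claim to the compatibility of the maps in the distinguished triangles \eqref{eq:nvdist1} and \eqref{eq:nvdist2} with the base change maps for $q_* = f_*$ (the squares the paper labels (D) and (E)), with the remaining squares being formal. The only cosmetic difference is that the paper packages everything into a single commutative diagram in $D^b(D)$ built from $\bQ_X \to f_* \bQ_{X'}$ and then takes the Verdier dual of its outer square to pass to Borel--Moore homology, whereas you work directly with the description of $\HBM_*$ and proper pushforward in terms of $\omega$; the content is the same.
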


\begin{proof}
  Consider the following diagram in $D^b(D)$:
  \[
  \xymatrix{
  \bQ_{D} \ar[r]^-{\sim} \ar[dd] \ar@{}[ddr] | {\textstyle{\text{(A)}}}
  & i^* \bQ_{X} \ar[r] \ar[d] \ar@{}[dr] | {\textstyle{\text{(B)}}}
  & \psi_u(\bQ_X) \ar[r] \ar[d] \ar@{}[dr] | {\textstyle{\text{(C)}}}
  & i^! \bQ_{X}[2] \ar[d] \\
  {}
  & i^* f_* \bQ_{X'} \ar[r] \ar[d]^-{\simd} \ar@{}[dr] | {\textstyle{\text{(D)}}}
  & \psi_u(f_* \bQ_{X'}) \ar[r] \ar[d]^-{\simd} \ar@{}[dr] | {\textstyle{\text{(E)}}}
  & i^! f_* \bQ_{X'}[2] \ar[d]^-{\simd} \\
  (f|_{D'})_* \bQ_{D'} \ar[r]^-{\sim}
  & (f|_{D'})_* {i'}^* \bQ_{X'} \ar[r]
  &(f|_{D'})_* \psi_{u \circ f} (\bQ_{X'}) \ar[r]
  & (f|_{D'})_* {i'}^! \bQ_{X'}[2].
  }
  \]
  The commutativity of diagrams (A), (B), and (C) is obvious.
  The commutativity of the diagrams (D) and (E) follows from the fact that 
  the maps in \eqref{eq:nvdist1} and \eqref{eq:nvdist2} commute with the base change maps.
  By taking the Verdier-dual of the outer square of the above diagram, we obtain the desired claim.
\end{proof}

%Then the following diagram commutes:
%\begin{align}\label{eq:Gysinpush}
%  \begin{split}
%  \xymatrix{
%  \HBM_*(X') \ar[r]^-{_{\psi} {i'}^!} \ar[d]^{f_*}  &   %\HBM_{*-2}(D') \ar[d]^{(f|_{D'})_*}  \\
%  \HBM_*(X) \ar[r]^-{_{\psi} i^!} & \HBM_{*-2}(D).
%  }
%\end{split}
%\end{align}
%This is a direct consequence of the fact that maps in \eqref{eq:nvdist1} and \eqref{eq:nvdist2}
%commute with base change maps.

\begin{prop}\label{prop:clGysin}
  Assume that $D \subset X$ is a Cartier divisor (i.e $f^{\red}$ is not identically zero over any irreducible component of $X$).
  Then the following diagram commutes:
  \[
  \xymatrix@C=50pt{
  A_*(X) \ar[r]^-{i^!} \ar[d]^-{\cl_{X}} & A_*(D) \ar[d]^-{\cl_{D}} \\
  \HBM_*(X) \ar[r]^-{_{\psi} i^!}  & \HBM_*(D)
  }
  \]
  where $i^!$ is the usual Gysin pullback for Chow groups.
\end{prop}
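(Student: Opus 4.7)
The plan is to reduce the general statement, by a sequence of pushforward compatibilities, to an explicit local computation of the vanishing cycle map at a point of $\mathbb{A}^{1}$.

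\textbf{Step 1 (Reduction to the fundamental class on an irreducible scheme).} By linearity of both $i^{!}$ and $_{\psi}i^{!}$, it suffices to check the identity on classes $[V]$ for irreducible subvarieties $V\subset X$. Applying the pushforward compatibility \eqref{eq:Gysinpush} along the closed immersion $V\hookrightarrow X$, together with the analogous compatibility of $i^{!}$ with proper pushforward in the Chow group \cite[Theorem~6.2(a)]{Ful84} and the compatibility of cycle maps with proper pushforward, reduces the statement to the case $X=V$ irreducible, applied to $[X]$. The Cartier hypothesis then guarantees $X\not\subset D$, so $i^{!}[X]=[\mathrm{div}(u)]=\sum_{i}m_{i}[D_{i}]\in A_{*}(D)$.

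\textbf{Step 2 (Reduction to $X$ smooth).} Choose a resolution of singularities $f\colon X'\to X$, set $u'=u\circ f$, $D'=f^{-1}(D)$, and use $f_{*}[X']_{\BM}=[X]_{\BM}$, $f_{*}[X']=[X]$ (since $f$ is proper and birational). A second application of \eqref{eq:Gysinpush} and of the Chow pushforward--Gysin compatibility reduces the claim to the pair $(X',u')$, so we may henceforth assume $X$ is smooth and irreducible of dimension $n$.

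\textbf{Step 3 (Reduction to the local model on $\mathbb{A}^{1}$).} When $X$ is smooth and irreducible and $D$ is a Cartier divisor, $D$ is pure of dimension $n-1$, and thus $\HBM_{2(n-1)}(D)=\bigoplus_{i}\mathbb{Q}[D_{i}]_{\BM}$ is freely generated by the classes of the top--dimensional irreducible components of $D_{\mathrm{red}}$. Hence the coefficient of $[D_{i}]_{\BM}$ in $e_{u}\cap[X]_{\BM}$ is computed by restricting to any analytic neighborhood of a generic smooth point of $D_{i}$. At such a point, we may choose an analytic coordinate $v$ cutting out $D_{i}$, and $u=v^{m_{i}}\cdot(\text{unit})$ where $m_{i}=\mathrm{ord}_{D_{i}}(u)$ is exactly the multiplicity appearing in $\mathrm{div}(u)$. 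Using the base change compatibility of the vanishing cycle functor under the smooth projection away from the $v$--direction (and a K\"unneth argument), the computation reduces to the universal one--dimensional case $X=\mathbb{A}^{1}$, $u=v^{m}$, $D=\{0\}$.

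\textbf{Step 4 (Local computation and main obstacle).} In this universal case, the maps in the distinguished triangles \eqref{eq:nvdist1} and \eqref{eq:nvdist2} must be unwound. One has $\psi_{u}(\mathbb{Q}_{\mathbb{A}^{1}})|_{0}\cong\mathbb{Q}^{m}$ (cohomology of the Milnor fiber $\{v^{m}=\varepsilon\}$), the map $\mathbb{Q}\to\psi_{u}(\mathbb{Q}_{\mathbb{A}^{1}})|_{0}$ induced by \eqref{eq:nvdist1} is identified with the diagonal $\mathbb{Q}\hookrightarrow\mathbb{Q}^{m}$, and the map $\psi_{u}(\mathbb{Q}_{\mathbb{A}^{1}})|_{0}\to i^{!}\mathbb{Q}_{\mathbb{A}^{1}}[2]|_{0}\cong\mathbb{Q}$ induced by \eqref{eq:nvdist2} is identified with the summation (trace) map $\mathbb{Q}^{m}\to\mathbb{Q}$. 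Thus $e_{u}$ acts on $[\mathbb{A}^{1}]_{\BM}$ as multiplication by $m$, yielding $e_{v^{m}}\cap[\mathbb{A}^{1}]_{\BM}=m\,[\{0\}]_{\BM}$, which matches $\mathrm{cl}_{\{0\}}(m\,[\{0\}])=\mathrm{cl}_{\{0\}}(\mathrm{div}(v^{m}))$. The main obstacle is precisely this last identification of the maps in \eqref{eq:nvdist1} and \eqref{eq:nvdist2} with the diagonal and the trace; it is carried out by tracing through the definitions of $K_{\psi}$, $K_{\varphi}$ and the universal cover $\widetilde{\mathbb{C}^{*}}\to\mathbb{C}$ used in \S\ref{sec:app}, using that the $m$--sheeted cover $v\mapsto v^{m}$ lifts canonically through $p\colon\widetilde{\mathbb{C}^{*}}\to\mathbb{C}^{*}$.
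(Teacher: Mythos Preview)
Your overall strategy matches the paper's proof, but Step~1 has a genuine gap. After reducing to $X=V$ via \eqref{eq:Gysinpush}, the Cartier hypothesis on the \emph{original} $X$ does not transfer: if $V\subset D$ then $u|_V\equiv 0$, and your assertion ``the Cartier hypothesis then guarantees $X\not\subset D$'' is simply false post-reduction. The paper treats this case separately in one line: for $V\subset D$ one has $i^![V]=c_1(\cO_X(D)|_V)\cap[V]=0$ since $D=\mathrm{div}(u)$ is principal, and ${}_\psi i^!([V]_{\BM})=0$ by \eqref{eq:Gysinpush} applied to $V\hookrightarrow X$ (the nearby cycle functor for the zero function vanishes because $V_{>0}=\emptyset$). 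You must insert this before proceeding.

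Your Step~4 also diverges from the paper in an interesting way. After reaching the local model $X=\bA^n$, $u=z_1^d$, the paper does \emph{not} compute directly for general $d$; instead it applies \eqref{eq:Gysinpush} one more time to the degree-$d$ finite map $f=(z_1^d,\id_{\bA^{n-1}})$ (with $u=z_1$ on the target) to reduce to $d=1$, and then invokes Proposition~\ref{prop:nearbyid}. This is cleaner because it isolates all the honest analytic content into the multiplicity-one case, whose proof in \S\ref{ssec:nearbyid} already requires unwinding the complexes $K'_\psi,K'_\varphi$ via de~Rham resolutions and an application of Stokes' theorem. Your direct route for general $m$ is possible in principle, but the identification of the map $\psi_{v^m}(\bQ)|_0\to i^!\bQ[2]|_0$ with the trace is precisely where the substance lies, and ``tracing through the definitions'' significantly understates what is required --- you would essentially be redoing the proof of Proposition~\ref{prop:nearbyid} in a harder setting rather than leveraging it.
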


\begin{proof}
  We need to prove
  \[
  \cl_D \circ i^! ([V]) = { _{\psi}} i^! ([V]_{\BM})
  \]
  for each irreducible subvariety $V \subset X$.
  Firstly consider the case when $V$ is included in $D$.
  Then $i^! [V] = 0$ since $D$ is linearly equivalent to $0$,
  and the commutativity of \eqref{eq:Gysinpush} implies ${ _{\psi}} i^! ([V]_{\BM})= 0$.

  Now we may assume $V$ and $D$ intersect properly.
  Replacing $X$ by $V$ using the commutativity of \eqref{eq:Gysinpush},
  we may assume $X = V$.
  Further replacing $X$ by its normalization and using the commutativity of \eqref{eq:Gysinpush} again and shrinking $X$ if necessary, we may assume
  $X$ is smooth and $D$ is irreducible.
  In this case $ i^! ([V]) = d[D^{\red}]$ where $d$ is the multiplicity of $D$ in $X$.
  Now we want to compute
  \[
  { _{\psi}} i^!  ([X]_{\BM}) \in \HBM_{2 \dim X -2} (D) = \bZ \cdot [D^{\red}]_{\BM}.
  \]
  Since this class can be computed analytically locally, we may replace $X$ by $\bA^{n}$ and $u$ by $z_1^d$ where $z_1$ is the first projection.
  Using the commutativity of \eqref{eq:Gysinpush} for the map $f = (z_1^d, \id_{\bA^{n-1}})$, we may assume $d = 1$.
  In this case, the claim follows from Proposition \ref{prop:nearbyid}.
\end{proof}

\begin{proof}[Proof of Theorem \ref{thm:spcomm}]
We use notations from the diagram \eqref{eq:spdiag}.
We need to prove
\[
\cl_{C_{Z/X}} (\Sp^{\Chow}_{Z/X}([V])) = \Sp^{\BM}_{Z/X}([V]_{\BM})
\]
for each irreducible subvariety $V \subset X$ of dimension $d$.
We let $\widetilde{V} \subset M_{Z/X}^o$ denote the strict transform of $V \times \bA^1$.
Proposition \ref{prop:clGysin} claims the equality
\[
\cl_{C_{Z/X}} (\Sp^{\Chow}_{Z/X}([V])) = ({_{\psi}}i_{C_{Z/X}})^!([\widetilde{V}]_{\BM})
\]
hence what we need to prove is the equality
\[
\Sp^{\BM}_{Z/X}([V]_{\BM}) = ({_{\psi}}i_{C_{Z/X}})^!([\widetilde{V}]_{\BM}).
\]

Consider the following diagram:
\begin{equation}\label{eq:diag1}
\xymatrix@C=35pt{
 \bQ_{C_{Z/X}} \ar[r] \ar@{=}[d] & \psi_{p}(\bQ_{M_{Z/X}^o}) \ar[r]^-{\psi_p(- \cap [\widetilde{V}]_{\BM})} \ar[d]^-{\simd} & \psi_{p}(\omega_{M_{Z/X}^o})[-2d-2] \ar[r] \ar[d]^-{\simd} & \omega_{C_{Z/X}}[-2d] \ar@{=}[d]  \\
 \bQ_{C_{Z/X}} \ar[r] & \Sp_{Z/X}(\bQ_X) \ar[r]^-{\Sp_{Z/X}(- \cap [V]_{\BM})} & \Sp_{Z/X}(\omega_X)[-2d] \ar[r] & \omega_{C_{Z/X}}[-2d].
}
\end{equation}
Here the middle left and middle right vertical arrows are defined by isomorphisms $\pr_1^* \bQ_{X} \cong \bQ_{X \times (\bA^1  \setminus\{0 \})}$ and $\pr_1^* \omega_{X} \cong \omega_{X \times (\bA^1  \setminus\{0 \})}[-2]$ respectively,
upper left and upper right horizontal arrows are defined in \eqref{eq:nvdist1} and \eqref{eq:nvdist2} respectively, and
lower left and lower right horizontal arrows are defined so that left and right squares commute.
The commutativity of the middle diagram follows from the equality
\[
\pr_1^* [V] = [\widetilde{V} \cap (X \times  (\bA^1  \setminus\{0 \}))].
\]
The composition of upper horizontal arrows corresponds to the element
\[
({_{\psi}}i_{C_{Z/X}})^!([\widetilde{V}]_{\BM}) \in \HBM_{2d}(C_{Z/X})
\] by definition.
Therefore we need to prove that the composition of lower horizontal arrows corresponds to the element $\Sp^{\BM}_{Z/X}([V]_{\BM})$.
By the definition of $\Sp^{\BM}_{Z/X}$, this statement is equivalent to the commutativity of the outer rectangle of the following diagram
\[
\xymatrix@C=10pt{
 \omega_{Z/C_{Z/X}}  \ar[rr] \ar@{=}[d] & & \Sp_{Z/X}(\bQ_X) |^!_Z  \ar[rr]  & & \Sp_{Z/X}(\omega_X)[-2d]|_{Z}^! \ar[rr]^-{\sim} & & \ar@{=}[d] \omega_Z [-2d] \\
 \omega_{Z/C_{Z/X}} \ar[rrr]^-{\eqref{eq:spmapp}} & & & \omega_{Z/X} \ar[rrr]^-{(Z \hookrightarrow X)^!(- \cap [V]) } \ar@{-->}[lu]^-{\sim}_-{\eqref{eq:spres!}} & & & \omega_{Z}[-2d]
}
\]
where the upper horizontal arrows are obtained by applying the functor $(Z \hookrightarrow C_{Z/X})^!$ to the maps in the lower horizontal arrows of the diagram \eqref{eq:diag1}.
The commutativity of the right quadrilateral is obvious.
Note that the composition of \eqref{eq:spres!} and \eqref{eq:spmapp} is obtained by applying the functor $(Z \hookrightarrow C_{Z/X})^!$ to
the composition
\[
\bQ_{C_{Z/X}} \otimes (Z \hookrightarrow C_{Z/X})_* \omega_{Z/C_{Z/X}} \xrightarrow[\sim]{\eqref{eq:spres*}} \Sp_{Z/X}(\bQ_X) \otimes (Z \hookrightarrow C_{Z/X})_* \omega_{Z/C_{Z/X}} \to \Sp_{Z/X}(\bQ_X)
\]
where the second map is the $!$-counit map.
Then the commutativity of the left quadrilateral follows from the construction of the map \eqref{eq:spres*}.

\end{proof}

%=============================================================

\section{Fourier--Sato transform of the specialization}\label{sec:FS}

The aim of this section is to prove the following statement:

\begin{thm}\label{thm:vansp}
 Let $X$ be a separated scheme of finite type over $\Spec \bC$, $E$ be a vector bundle on $X$, and $s \in \Gamma(X, E)$ be a section.
 Write $Z = s^{-1}(0)$ and $E_Z = E |_{Z}$. Denote by $\pi_{E^\vee} \colon E^\vee \to X$ the projection and $\bar{s} \colon E^\vee \to \bA^1$ the regular function corresponding to $s$.
 We let $\iota_{C_{Z/X}} \colon C_{Z/X} \hookrightarrow E_Z$ denote the natural inclusion from the normal cone.
 Then for a constructible complex $\cF \in D_c^b(X)$, we have an isomorphism
 \begin{align}\label{eq:vansp}
 \FS_{E_Z^\vee}(\vphi_{\bar{s}}(\pi_{E^\vee}^!\cF)|_{E_Z^\vee})  \cong \iota_{C_{Z/X},*} \Sp_{Z/X}(\cF).
\end{align}

\end{thm}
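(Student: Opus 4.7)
The plan is to prove the theorem via a deformation-to-the-normal-cone argument carried out uniformly in a family. Set $M \coloneqq M^0_{Z/X}$ and let $p\colon M \to \bA^1$ and $\tilde p\colon M \to X$ be the natural maps from diagram \eqref{eq:spdiag}. Since $s$ vanishes along $Z$, the section $\tilde p^*s$ admits a canonical rescaling $\sigma \coloneqq t^{-1}\tilde p^*s$ that extends regularly to a global section of $\tilde p^*E$ on all of $M$: on the open locus $\{t \ne 0\}$ it is a positive rescaling of $s$, while on the special fiber at $t=0$ it corresponds, under the identification $\tilde p^*E|_{C_{Z/X}} = \pi_C^*E_Z$, to the natural inclusion $\iota_{C_{Z/X}}\colon C_{Z/X} \hookrightarrow E_Z$, where $\pi_C\colon C_{Z/X}\to Z$ is the projection. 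Let $\bar\sigma\colon \tilde p^*E^\vee \to \bA^1$ be the associated fiberwise linear function and $\pi_M\colon \tilde p^*E^\vee \to M$ the structural projection.

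Next, form the complex $\cG \coloneqq \vphi_{\bar\sigma}(\pi_M^!\tilde p^*\cF)$ on $\bar\sigma^{-1}(0) \subset \tilde p^*E^\vee$ and apply the fiberwise Fourier--Sato transform along $\tilde p^*E^\vee\to M$. By the base change properties of the Fourier--Sato transform and of the vanishing cycle functor recorded in Section \ref{sec:app}, this construction commutes with restriction to any fiber of $p$. Over the generic locus $\{t \ne 0\}$ one recovers the Fourier--Sato transform of the left-hand side of \eqref{eq:vansp}, using that the positive rescaling $\bar s \mapsto \bar s/t$ induces a canonical isomorphism on vanishing cycles. The central step is then to show that over the special fiber $\{t=0\}$ one gets exactly $\iota_{C_{Z/X},*}\Sp_{Z/X}(\cF)$. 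Propagating the identification from the special fiber to the generic fiber via nearby cycles along $p$—which commute with $\vphi$ and $\FS$ through the base change maps of Section \ref{sec:app}—then produces the desired isomorphism.

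The hard part will be this special-fiber identification. On $\{t = 0\}$ the function $\bar\sigma$ becomes the tautological pairing $(c,\xi) \mapsto \xi(\iota_{C_{Z/X}}(c))$ on $\pi_C^*E_Z^\vee$, so the needed identity is a fiberwise-linear Fourier--Sato statement: the Fourier--Sato transform of $\vphi$ of a fiberwise linear function should exchange the vanishing cycles with the pushforward along the zero locus of the associated section. In the absolute case $\cF = \bQ_X$ this is essentially Davison's dimensional reduction theorem—which the present paper explicitly re-proves—so no shortcut is available. I expect the argument to reduce the statement, via Proposition \ref{prop:nearbyid} together with the commutation isomorphisms of Propositions \ref{prop:FSpure} and \ref{prop:FSunit}, to a direct comparison between the Fourier--Sato transform of the pairing's vanishing cycles and the pushforward from the cone $C_{Z/X}$. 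Keeping careful track of signs and shifts—especially the $(-1)^{\rank E}$ factor arising in Proposition \ref{prop:FSunit}—will be the main bookkeeping hurdle.
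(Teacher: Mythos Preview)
Your family approach over $M^0_{Z/X}$ is genuinely different from the paper's, but it contains a circularity that you have not broken. After you commute $\psi_p$ past $\FS$ and $\varphi_{\bar\sigma}$, the ``special-fiber identification'' you are left with is precisely the statement of Theorem~\ref{thm:vansp} for the data $(\text{base}=C_{Z/X},\ \text{bundle}=\pi_C^*E_Z,\ \text{section}=\iota_{C_{Z/X}})$, applied to the conic complex $\Sp_{Z/X}(\cF)$: the zero locus of that tautological section is again $Z$, its normal cone is $C_{Z/C_{Z/X}}\cong C_{Z/X}$, and the resulting inclusion into $E_Z$ is $\iota_{C_{Z/X}}$ once more. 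So you have reduced the theorem to itself. The propositions you invoke (\ref{prop:nearbyid}, \ref{prop:FSpure}, \ref{prop:FSunit}) record compatibilities of $\FS$ with purity and with unit/counit maps; none of them computes $\FS$ of the vanishing cycles of a bilinear pairing, which is exactly what remains. Note also that commuting $\psi_p$ with $\varphi_{\bar\sigma}$ is \emph{not} among the base-change maps of Section~\ref{sec:app}: those maps \eqref{eq:nvbc1}--\eqref{eq:nvbc4} compare $\psi_u$ (or $\varphi_u$) along a morphism $q$ for a \emph{single} function $u$, whereas here you need to interchange nearby cycles for $p$ with vanishing cycles for $\bar\sigma$, which is a separate (and more delicate) fact.

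The paper avoids this loop by a direct computation that never takes $\varphi$ twice. It rewrites $\varphi_{\bar s}(\pi_{E^\vee}^!\cF)|_{E_Z^\vee}$ via the half-line model \eqref{eq:revan}, applies $\FS$ and the isomorphism \eqref{eq:FSbcc3} to reduce to $\FS_{X\times\bA^1}$ of the single explicit complex $i_{X\times\ell_{\le 0},*}\,p_{\ell_{\le 0}}^!\cF$, and then computes this Fourier--Sato transform by hand (Lemma~\ref{lem:FShalf}): it is $j_{X\times\bA^1_{>0},*}\,p_{\bA^1_{>0}}^*\cF$. After the fiberwise inversion $\tau$ and a stalkwise argument (Lemma~\ref{lem:isoun}), the resulting complex is identified with $\iota_{C_{Z/X},*}\Sp_{Z/X}(\cF)$ through the embedding $M^0_{Z/X}\hookrightarrow E\times\bA^1$ (Lemma~\ref{lem:spv}). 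The deformation space enters only at this last step, as geometry relating $\tilde s_\tau$ to the specialization, not as a parameter along which a family version of the theorem is being run.
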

As a consequence, we will give a new proof of Davison's dimensional reduction theorem \cite[Theorem A.1]{Dav17} and prove our main result for local models for quasi-smooth derived schemes.

\subsection{Proof of Theorem \ref{thm:vansp}}

Firstly, we rewrite the complex $\vphi_{\bar{s}}(\pi_{E^\vee}^!\cF)|_{E_Z^\vee}$ so that its Fourier--Sato transform can be easily computed.
Let $\ell_{\leq 0} \subset \bC$ denote the closed half line consisting of the non-positive real numbers.
Consider the following commutative diagram:
\begin{align}\label{eq:diag3.1}
  \begin{split}
\xymatrix@C=40pt{
E_{Z}^\vee \ar@{^{(}->}[r]^-{i_{E_Z^\vee}} \ar[d]^-{\pi_{E^\vee_Z}} & E^\vee \ar[d]^-{\pi_{E^\vee}} \ar[r]^-{\hat{s}} & X \times \bA^1 \ar[ld]^(.3){p_{\bA^1}} & X \times \ell_{\leq 0} \ar[lld]^-{p_{\ell_{ \leq 0}}} \ar@{_{(}->}[l]_-{i_{X \times \ell_{\leq 0}}} \\
 Z \ar@{^{(}->}[r]^-{i_Z} & X & &
}
\end{split}
\end{align}
where we define $\hat{s} \coloneqq  (\pi_{E^\vee}, \bar{s})$, $i_{E_Z^\vee}$, $i_Z$ and $i_{X \times \ell_{\leq 0}}$ are natural inclusions, and $\pi_{E^\vee_Z}$, $\pi_{E^\vee}$, $p_{\bA^1}$ and $p_{\ell_{ \leq 0}}$
are natural projections.
Consider the following complex
\[
\vphi_{\bar{s}}^{\ell_{\leq 0}}(\pi_{E^\vee}^!\cF) \coloneqq (\bar{s}^{-1}(0) \hookrightarrow \bar{s}^{-1}(\ell_{\leq 0}))^*(\bar{s}^{-1}(\ell_{\leq 0}) \hookrightarrow E^\vee)^!\pi_{E^\vee}^!\cF.
\]
As we have seen in \eqref{eq:vanindep}, we have a natural isomorphism
\begin{equation*}
\vphi_{\bar{s}}^{\ell_{\leq 0}}(\pi_{E^\vee}^!\cF) \xrightarrow[]{\cong} \vphi_{\bar{s}}(\pi_{E^\vee}^!\cF).
\end{equation*}
Therefore we have
\begin{align}\label{eq:revan}
  \vphi_{\bar{s}}(\pi_{E^\vee}^!\cF) |_{E_Z^\vee} \cong
    (\hat{s}^!  i_{X \times \ell_{\leq 0}, *} p_{\ell_ {\leq 0}}^! \cF) |_{E_Z^\vee}.
\end{align}
Applying the functor $\FS_{E^\vee_Z}$ to both sides, we obtain
\begin{align}\label{eq:FSrevan}
  \begin{split}
    \FS_{E^\vee_Z}(\vphi_{\bar{s}}(\pi_{E^\vee}^!\cF) |_{E_Z^\vee})
    & \cong \FS_{E^\vee_Z}( (\hat{s}^!  i_{X \times \ell_{\leq 0}, *}  p_{\ell_ {\leq 0}}^! \cF) |_{E_Z^\vee}) \\
    & \cong \FS_{E^\vee}( \hat{s}^!  i_{X \times \ell_{\leq 0}, *}  p_{\ell_ {\leq 0}}^! \cF) |_{E_Z} \\
    & \cong (\tilde{s}_* \FS_{X \times \bA^1}(  i_{X \times \ell_{\leq 0}, *}  p_{\ell_ {\leq 0}}^! \cF)) |_{E_Z}
 \end{split}
\end{align}
where $\tilde{s}$ is the dual map of $\hat{s}$.
The second and third isomorphism are defined in \eqref{eq:FSbc^*} and \eqref{eq:FSbcc3} respectively.

Consider the following diagram:
\begin{align*}
  \xymatrix@C=40pt{
  E_Z \ar@{^{(}->}[r]^-{i_{E_Z}} \ar[d]^-{\pi_{E_Z}} & E \ar[d]^-{\pi_E}  & X \times \bA^1 \ar[l]_-{\tilde{s}} \ar[ld]^(.3){p_{\bA^1}} & X \times {\bA^1_{>0}} \ar[l]_-{j_{X \times \bA^1_{>0}}} \ar[lld]^-{p_{\bA^1_{>0}}} \\
 Z \ar@{^{(}->}[r]^-{i_Z}  & X & &
  }
\end{align*}
where $\bA^1_{>0} \subset \bA^1$ is the open subset consisting of points whose real part is positive, $i_{E_Z}$ and $j_{X \times \bA^1_{>0}}$ are natural inclusions and $\pi_E$, $\pi_{E_Z}$, $p_{\bA^1}$ and $p_{\bA^1_{>0}}$ are natural projections.

\begin{lem}\label{lem:FShalf}
  There exists an isomorphism
  \begin{equation}\label{eq:FShalf}
  \FS_{X \times \bA^1}(  i_{X \times \ell_{\leq 0}, *}  p_{\ell_ {\leq 0}}^! \cF))
  \cong j_{X \times \bA^1_{>0}, *}  p_{\bA^1_ {>0}}^* \cF.
\end{equation}
\end{lem}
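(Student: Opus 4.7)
The plan is to reduce the lemma to the case $X = \pt$, $\cF = \bQ$, where I can identify both sides of \eqref{eq:FShalf} with the constant sheaf on the closed right half-plane $\bA^1_{\geq 0} \subset \bC$ by direct computation.

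First I would observe that both complexes in \eqref{eq:FShalf} are external products over $X$. K\"unneth formulas arising from base change along $\pi_X \colon X \to \pt$ give
\[
i_{X \times \ell_{\leq 0},*}\, p_{\ell_{\leq 0}}^!\, \cF \;\cong\; \cF \boxtimes (i_{\ell_{\leq 0},*}\, \omega_{\ell_{\leq 0}}),
\quad
j_{X \times \bA^1_{>0}, *}\, p_{\bA^1_{>0}}^*\, \cF \;\cong\; \cF \boxtimes (j_{\bA^1_{>0}, *}\, \bQ_{\bA^1_{>0}}).
\]
Combining the Fourier--Sato base change \eqref{eq:FSbc^*} along $\pi_X$ with the projection formula $\FS_{X \times \bA^1}(\cG \otimes \pi_X^* \cF) \cong \FS_{X \times \bA^1}(\cG) \otimes \pi_X^* \cF$, which follows directly from the projection formulas for $p'_!$ and for the closed immersion $\iota_{P',*} = \iota_{P',!}$, yields $\FS_{X \times \bA^1}(\cF \boxtimes \cK) \cong \cF \boxtimes \FS_\bC(\cK)$. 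This reduces the lemma to the statement $\FS_\bC(i_{\ell_{\leq 0},*}\, \omega_{\ell_{\leq 0}}) \cong j_{\bA^1_{>0},*}\, \bQ_{\bA^1_{>0}}$ on $\bC$.

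Next I would identify both sides of this reduced equation with $(\bA^1_{\geq 0} \hookrightarrow \bC)_*\, \bQ_{\bA^1_{\geq 0}}$. For the right-hand side, the natural map $\bQ_{\bA^1_{\geq 0}} \to (\bA^1_{>0} \hookrightarrow \bA^1_{\geq 0})_*\, \bQ_{\bA^1_{>0}}$ is an isomorphism by a stalk calculation: the intersection of a small neighbourhood of any point on the imaginary axis with $\bA^1_{>0}$ is a contractible half-disk. For the left-hand side, I first identify $\omega_{\ell_{\leq 0}} \cong j_{\bR_{<0},!}\, \bQ_{\bR_{<0}}[1]$, using that the stalk of $i_{\ell_{\leq 0}}^! \bQ_\bC$ at the boundary point $0$ vanishes while it restricts to $\bQ[-1]$ on the smooth real $1$-manifold $\bR_{<0}$. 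Writing the input as $k_!\, \bQ_{\bR_{<0}}[1]$ with $k \colon \bR_{<0} \hookrightarrow \bC$ the locally closed embedding and unwinding the definition $\FS_\bC = p'_! \iota_{P',*} \iota_{P'}^* p^*$, repeated use of proper base change and the identity $\iota_{P',*} = \iota_{P',!}$ identifies $\iota_{P',*} \iota_{P'}^* p^*(k_!\bQ[1])$ with the $!$-extension of $\bQ[1]$ from $\bR_{<0} \times \bA^1_{\geq 0}$ to $\bC \times \bC$, the product being cut out of $\bR_{<0} \times \bC$ by the condition $\Re(vw) \leq 0$ with $v < 0$. The pushforward $p'_!$ then computes the compactly supported cohomology of the fibres $\bR_{<0}$, namely $H^*_c(\bR_{<0}, \bQ)[1] = \bQ$ in degree $0$, producing the desired sheaf on $\bA^1_{\geq 0}$.

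The main obstacle will be to produce a canonical isomorphism rather than an ad hoc stalk-wise match. I plan to do this by assembling the base change isomorphisms used at each step (for $p^*$, $\iota_{P'}^*$, $\iota_{P',*} = \iota_{P',!}$, and $p'_!$) into a single chain of canonical morphisms between the two sides; the resulting composite can then be verified to be an isomorphism by the stalk computation above. An alternative would be to invoke general results from \cite[\S 3.7]{KS13} on the Fourier--Sato transform of constant sheaves supported on closed convex cones, applied to $\ell_{\leq 0}$ and its polar $\bA^1_{\geq 0}$ under the pairing $\Re(vw)$.
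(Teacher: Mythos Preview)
Your argument is correct but follows a genuinely different route from the paper's. You reduce to $X=\pt$ at the outset via a K\"unneth/projection formula, then identify both sides with $(\bA^1_{\geq 0}\hookrightarrow\bC)_*\bQ_{\bA^1_{\geq 0}}$ by direct computation (using $\omega_{\ell_{\leq 0}}\cong j_{\bR_{<0},!}\bQ[1]$ and pushing the Fourier--Sato kernel through). The paper instead writes $\cG$ for the left-hand side and constructs the isomorphism \emph{as the adjunction unit} $\cG\to j_{X\times\bA^1_{>0},*}(\cG|_{X\times\bA^1_{>0}})$: it shows this unit is invertible by proving $\cG|^!_{X\times\{\xi\}}=0$ for $\xi\notin\bA^1_{>0}$ (here reducing to $X=\pt$ and switching to $\FS'$ so that the costalk becomes $R\Gamma(H_\xi,(i_{\ell_{\leq 0},*}\omega_{\ell_{\leq 0}})|^!_{H_\xi})$, which vanishes since $\ell_{\leq 0}\subset H_\xi$), and then identifies the restriction to $X\times\bA^1_{>0}$ with $p_{\bA^1_{>0}}^*\cF$ by base change using $\ell_{\leq 0}\subset H'_\xi$.

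What each approach buys: your computation is shorter and more concrete, and your alternative via the polar-cone formula from \cite[\S 3.7]{KS13} is the cleanest route to an abstract isomorphism. The paper's approach, however, hands you a specific natural map (the unit) whose compatibility with the other base change maps is transparent; this matters because \eqref{eq:FShalf} is one of seven steps in the composite \eqref{eq:compvansp} and must later be slotted into the commutative diagrams of \S\ref{ssec:dimredcomp} (notably \eqref{eq:largerectangle}). You correctly flag this as the main obstacle in your proposal; if you pursue your route, you would still need to check that your assembled chain of base change isomorphisms agrees with the paper's unit map (or at least satisfies the same diagrammatic compatibilities), which is roughly the same amount of bookkeeping the paper does directly.
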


\begin{proof}
%  Let $ P' \subset X \times \bA^1 \times \bA^1$ denote the closed subset defined by
%  \[
%  P' \coloneqq \{ (x, t, \xi) \in X \times \bA^1 \times \bA^1 \mid \Re (t \cdot \xi) \leq 0 \}
%  \]
%  and denote by $\iota_{P'}$ the closed immersion $P' \hookrightarrow X \times \bA^1 \times \bA^1$.
%  By definition, the left-hand side in the claim is equal to
%  \[
%  (P' \xrightarrow{\pr_{13} \circ \iota_{P'}} X\times  \bA^1)_!(P' \xrightarrow{\pr_{12} \circ \iota_{P'}} X\times  \bA^1)^*
%  i_{X \times \ell_{\leq 0}, *}  p_{\ell_ {\leq 0}}^! \cF
%  \]
%  which will be denoted by $\cG$.
% Here $\pr_{12}$ (resp. $\pr_{13}$) denotes the projection onto the first and second factors (resp. the first and third factors).

Let $\cG$ be the complex at the left-hand side of the claim.
For a point $\xi \in \bA^1 \setminus \bA^1_{>0}$,
we claim the vanishing $\cG |^!_{X \times \{ \xi \}} = 0$.
Using the isomorphism \eqref{eq:FSbc^!}, we may assume that $X$ is a point which will be denoted by $\pt$ and $\cF$ is a constant sheaf $\bQ_{\pt}$.
Now recall that we have seen in \eqref{eq:FSs} that there exists an isomorphism of functors $\FS_{\bA^1} \cong \FS'_{\bA^1}$ where the functor  $\FS'_{ \bA^1}$ is defined in \eqref{eq:FSdef}.
We define closed subsets $H_{\xi}, H_{\xi}'  \subset \bA^1$ by
\begin{align*}
H_{\xi} \coloneqq \{ t \in \bA^1 \mid \Re (t \cdot \xi) \geq 0 \}, \ \ 
H_{\xi}' \coloneqq \{ t \in \bA^1 \mid \Re (t \cdot \xi) \leq 0 \}.
\end{align*}
By the base change theorem, we have
\[
    \FS'_{ \bA^1}(  i_{ \ell_{\leq 0}, *}  p_{\ell_ {\leq 0}}^! \bQ_{\pt})|^!_{\xi} 
    \cong R \Gamma(H_{\xi}, (i_{ \ell_{\leq 0}, *}  p_{\ell_ {\leq 0}}^! \bQ_{\pt})|_{H_{\xi}}^!)
\]
where $i_{ \ell_{\leq 0}} \colon  \ell_{\leq 0} \hookrightarrow \bA^1$ is the natural inclusion.
The assumption on $\xi$ implies that $H_{\xi}$ contains $\ell_{\leq 0}$ and it is easy to see that the complex at the right-hand side vanishes.
Therefore the unit map
\begin{align}\label{eq:unitisom}
\cG \to j_{X \times \bA^1_{>0}, *} (\cG |_{X \times \bA^1_{>0}})
\end{align}
is isomorphic.

For $\xi \in \bA^1_{> 0}$, the half line $l_{\leq 0}$ is contained in $H_{\xi}'$.
This implies the isomorphism
\[
\cG |_{X \times \bA^1_{>0}} \cong ({\pr_{13}}_! \pr_{12}^* i_{X \times \ell_{\leq 0}, *} p_{ \ell_{\leq 0}}^! \cF) |_{X \times \bA^1_{>0}}.
\]
By the base change theorem, we see that the right-hand side is isomorphic to $p_{\bA^1 > 0}^* \cF$. This isomorphism and the isomorphism \eqref{eq:unitisom} imply the desired result.
\end{proof}

Combining \eqref{eq:FSrevan} and Lemma \ref{lem:FShalf}, the complex at the left-hand side of \eqref{eq:vansp} is isomorphic to the complex
\begin{equation}\label{eq:inter1}
(\tilde{s}_* j_{X \times \bA^1_{>0}, *}  p_{\bA^1_{>0}}^* \cF)|_{E_Z}.
\end{equation}
Consider the following diagram
\begin{equation}
  \begin{split}
  \xymatrix@C=45pt{
X & \ar[l]_-{p_{\bA^1_{>0}}}  X\times \bA^1_{>0} \ar[r]^-{(s \circ p_{\bA^1_{>0}}) \cdot (\tau \circ \pr_2)} \ar[d]_-{((s \circ p_{\bA^1_{>0}}) \cdot (\tau \circ \pr_2), j_{\bA^1_{>0}} \circ \pr_2)} & E \ar@{^{(}->}[ld]^-{0_{E/E \times \bA^1}}  & E_Z \ar@{_{(}->}[l]_-{i_{E_Z}} \\
& E \times \bA^1 & &
  }
\end{split}
\end{equation}
where $\tau \colon \bA^1_{>0} \xrightarrow{\sim} \bA^1_{>0}$ is the involution taking the inverse,
$j_{\bA^1_{>0}} \colon \bA^1_{>0} \hookrightarrow \bA^1$ is the natural open inclusion, $0_{E/E \times \bA^1} \colon E \hookrightarrow E \times \bA^1$ is the zero section,
and $\pr_2 \colon X \times \bA^1_{>0} \to \bA^1_{>0}$ is the projection to the second factor.
To simplify the notation, we write
 \begin{align*}
 \tilde{s}_{\tau} &\coloneqq (s \circ p_{\bA^1_{>0}}) \cdot (\tau \circ \pr_2) \colon X\times \bA^1_{>0} \to E \\
 \tilde{s}_{\tau, p} &\coloneqq (\tilde{s}_{\tau}, j_{\bA^1_{>0}} \circ \pr_2) \colon X\times \bA^1_{>0} \to E \times \bA^1
\end{align*}
Note that there exists a natural isomorphism
\begin{align}\label{eq:nattau}
(\tilde{s}_* j_{X \times \bA^1_{>0}, *}  p_{\bA^1_{>0}}^* \cF)|_{E_Z} \cong  (\tilde{s}_{\tau, *}  p_{\bA^1_{>0}}^*\cF) |_{E_Z}.
\end{align}
Let $\pi_{E \times \bA^1/E} \colon E\times \bA^1 \to E$ be the projection.
Then the natural transformation
\[
\pi_{E \times \bA^1/E, *} 
\to  \pi_{E \times \bA^1/E, *} 0_{E/E \times \bA^1, *} 0_{E/E \times \bA^1}^* 
  \xrightarrow{\sim} 0_{E/E \times \bA^1}^*
\]
induces a map of complexes on $E_Z$
\begin{align}\label{eq:inter3}
 (\tilde{s}_{\tau, *}  p_{\bA^1_{>0}}^* \cF)|_{E_Z}
\to   (0_{E/E \times \bA^1}^*  \tilde{s}_{\tau, p, *}  p_{\bA^1_{>0}}^* \cF)|_{E_Z} .
\end{align}

\begin{lem}\label{lem:isoun}
  The map \eqref{eq:inter3} is an isomorphism.
\end{lem}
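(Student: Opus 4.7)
The map \eqref{eq:inter3} is obtained by applying the unit transformation $\id \to 0_{E/E \times \bA^1, *} \circ 0_{E/E \times \bA^1}^*$ inside the pushforward $\pi_{E \times \bA^1/E, *}$, using the identity $\pi_{E \times \bA^1/E} \circ 0_{E/E \times \bA^1} = \id_E$. I would prove the isomorphism by a pointwise stalk calculation on $E_Z$.

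The crucial simplification is the tautological identity $\tilde{s}_{\tau, p}^{-1}(U \times \bA^1) = \tilde{s}_\tau^{-1}(U)$ for any open $U \subset E$, together with $\tilde{s}_{\tau, p}^{-1}(U \times (-\epsilon, \epsilon)) = \tilde{s}_\tau^{-1}(U) \cap (X \times (0, \epsilon))$. Consequently, for $e \in E_Z$ both stalks compute derived global sections of $p_{\bA^1_{>0}}^*\cF$ over subsets of $X \times \bA^1_{>0}$: the LHS over $\tilde{s}_\tau^{-1}(U)$, the RHS over $\tilde{s}_\tau^{-1}(U) \cap (X \times (0, \epsilon))$, and the map \eqref{eq:inter3} is induced by the open inclusion. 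I would then show that for $e = (z, v) \in E_Z$ with $z \in Z$, after trivializing $E$ analytically near $z$ and choosing $U = U_X \times U_V$ with $U_X \subset X$ a sufficiently small neighborhood of $z$ and $U_V \subset E_z$ a sufficiently small ball around $v$, the set $\tilde{s}_\tau^{-1}(U)$ is already contained in $X \times (0, \epsilon)$, so the restriction is a literal equality.

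This containment follows from an elementary estimate. Since $s(z) = 0$, shrinking $U_X$ makes $|s(x)|$ uniformly small on $U_X$; the constraint $s(x)/t \in U_V$ then confines $t$ to an interval of size $O(\sup_{U_X} |s|)$. For $v \neq 0$ and $U_V$ of radius $\epsilon_V < |v|/2$, the bound $|s(x)/t - v| < \epsilon_V$ forces $t \in (|s(x)|/(|v| + \epsilon_V),\, |s(x)|/(|v| - \epsilon_V))$, which is small once $|s(x)|$ is small; the case $v = 0$ is handled by the bound $t > |s(x)|/\epsilon_V$ combined with an upper cut-off $t < \epsilon$. The main obstacle is ensuring that these estimates are uniform over $x \in U_X$, which is resolved by the analytic trivialization and a careful choice of $U_X, U_V, \epsilon$; once this is in place, the comparison map is the identity on the common derived global sections, and the lemma follows.
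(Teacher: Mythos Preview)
Your stalkwise plan and the identification of the two stalks as colimits of $\mH^i(\tilde{s}_\tau^{-1}(U),\,p_{\bA^1_{>0}}^*\cF)$ versus $\mH^i(\tilde{s}_\tau^{-1}(U)\cap(X\times D),\,p_{\bA^1_{>0}}^*\cF)$ match the paper exactly, and your estimate in the case $v\neq 0$ is a correct analytic variant of the paper's cleaner properness argument (the paper extends $\tilde{s}_\tau$ over $X\times(\bP^1\setminus D)$, notes that its image is closed and misses $(z,v)$, and removes it from $V$).

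The gap is at $v=0$. Your claim that ``$\tilde{s}_\tau^{-1}(U)$ is already contained in $X\times(0,\epsilon)$'' is false there: with $U_V$ a small ball around $0$, the condition $s(x)/t\in U_V$ gives only the \emph{lower} bound $|t|>|s(x)|/\epsilon_V$, and indeed for any $x\in U_X\cap Z$ one has $s(x)=0$ so $(x,t)\in\tilde{s}_\tau^{-1}(U)$ for \emph{every} $t\in\bA^1_{>0}$. No choice of $U_X,U_V$ confines $t$ to a bounded set, so your ``upper cut-off $t<\epsilon$'' cannot be imposed and the cofinality argument collapses. The paper handles this case by a genuinely different mechanism: for convex $V$ and $D$, each $\bR^+$-orbit intersects $\tilde{s}_\tau^{-1}(V)$ (and likewise $\tilde{s}_\tau^{-1}(V)\cap(X\times D)$) in a contractible set, so by \cite[Corollary~3.7.3]{KS13} one may push down along $p_{\bA^1_{>0}}$ and both sides compute $\mH^i(p_{\bA^1_{>0}}(\tilde{s}_\tau^{-1}(V)),\cF)$, hence the common stalk $\cF_{z}$. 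You need this conic/contractibility step (or an equivalent homotopy argument) to close the $v=0$ case.
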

\begin{proof}
  We let $\cL$ (resp. $\cR$) denote the complex at the left-hand side (resp. right-hand side) of the statement.
  We need to prove the induced map
  \begin{align}\label{eq:lrp}
  \mH^i (\cL_p) \to \mH^i (\cR_p)
\end{align}
  is isomorphic for each $i \in \bZ$ and $p \in E_Z$. We have isomorphisms
  \begin{align*}
  \mH^i (\cL_p) &\cong \varinjlim_{p \in V \subset E} \mH^i(\tilde{s}_{\tau}^{-1}(V), p_{\bA^1_{>0}}^* \cF) \\
  \mH^i (\cR_p) &\cong \varinjlim_{{\substack{p \in V \subset E \\ 0 \in D \subset \bA^1}}} \mH^i(\tilde{s}_{\tau}^{-1}(V) \cap (X \times D), p_{\bA^1_{>0}}^* \cF)
\end{align*}
where $V$ ranges over all open subsets of $E$ containing $p$ and $D$ ranges over all open subsets of $\bA^1$ containing the origin.

  Firstly assume that $p$ is not contained in the zero section.
  We will show that for each choice of $(V, D)$ as above, there exists an open subset $V' \subset E$ containing $p$ such that the inclusion
  \[
  \tilde{s}_{\tau}^{-1}(V') \subset \tilde{s}_{\tau}^{-1}(V) \cap (X \times D)
  \]
  holds, which implies that the map \eqref{eq:lrp} is an isomorphism.
  To see this, note that the map $\tilde{s}_{\tau} |_{X \times (\bA^1_{>0} \setminus D)}$ naturally extends to a map
  \[
  \tilde{s}_{\tau, \bP^1 \setminus D} \colon X \times (\bP^1 \setminus D) \to E.
  \]
  which is proper.
  It is clear that $p$ is not contained in the image of $\tilde{s}_{\tau, \bP^1 \setminus D}$ since we assumed $p \in E_Z \setminus Z$.
  Then we can take $V'$ as $V \setminus \mathrm{Im}( \tilde{s}_{\tau, \bP^1 \setminus D})$.

  Now suppose that $p$ is contained in the zero section.
  Let $V$ be a convex open neighborhood of $p$ in $E$ and $D$ be a convex open neighborhood of $0$ in $\bA^1$.
  For each point $q \in X \times \bA^1$, it is clear that $(\bR^+ \cdot q) \cap  \tilde{s}_{\tau}^{-1}(V)$ is contractible. Therefore
    \cite[Corollary 3.7.3]{KS13} implies an isomorphism
  \begin{align*}
    \mH^i(\tilde{s}_{\tau}^{-1}(V), p_{\bA^1_{>0}}^* \cF) \cong \mH^i(p_{\bA^1_{>0}}(\tilde{s}_{\tau}^{-1}(V)),  \cF).
  \end{align*}
  Similarly, we have an isomorphism
  \[
  \mH^i(\tilde{s}_{\tau}^{-1}(V) \cap (X \times D), p_{\bA^1_{>0}}^* \cF) \cong \mH^i(p_{\bA^1_{>0}}(\tilde{s}_{\tau}^{-1}(V) \cap (X \times D)),  \cF).
  \]
  Therefore we have isomorphisms
  \[
  \mH^i (\cL_p) \cong \mH^i (\cR_p) \cong \cF_p
  \]
  which implies the claim.
\end{proof}

\begin{lem}\label{lem:spv}
  There exists an isomorphism
  \begin{equation}\label{eq:spv}
  (0_{E/E \times \bA^1}^* \tilde{s}_{\tau, p, *}  p_{\bA^1_{>0}}^* \cF )  |_{E_Z} \cong \iota_{C_{Z/X}, *}  \Sp_{Z/X}(\cF).
\end{equation}
\end{lem}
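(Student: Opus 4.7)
The plan is to factor the map $\tilde{s}_{\tau, p}$ through the deformation to the normal cone $M^0_{Z/X}$ and then exploit a Cartesian square together with proper base change.

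First I would construct a canonical $\bA^1$-morphism $\Phi \colon M^0_{Z/X} \to E \times \bA^1$, determined by the requirement that its restriction to the open subscheme $X \times (\bA^1 \setminus \{0\}) \subset M^0_{Z/X}$ equals $(x,t) \mapsto (s(x)/t, t)$. Using the standard description $M^0_{Z/X} = \Spec_{X \times \bA^1}(\mathcal{O}_{X \times \bA^1}[I_Z/t])$, where $I_Z \subset \mathcal{O}_X$ is the ideal of $Z$ and $I_Z/t \subset \mathcal{O}_{X \times \bA^1}[1/t]$ denotes the subsheaf of fractions $f/t$ with $f \in I_Z$, the morphism $\Phi$ corresponds to the $\mathcal{O}_{X \times \bA^1}$-algebra map $\Sym(E^\vee) \otimes \mathcal{O}_{X \times \bA^1} \to \mathcal{O}_{X \times \bA^1}[I_Z/t]$ sending $e^* \in E^\vee$ to $e^*(s)/t$. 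Because the sections $\{e^*(s) : e^* \in E^\vee\}$ generate $I_Z$ (as $Z = s^{-1}(0)$), this algebra map is surjective, so $\Phi$ is a closed embedding, hence proper. Moreover, $\Phi$ restricted to the special fiber $C_{Z/X}$ recovers the given inclusion $\iota_C \colon C_{Z/X} \hookrightarrow E_Z \hookrightarrow E$, and the preimage of $E \times \{0\}$ under $\Phi$ is exactly $C_{Z/X}$.

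Consequently, the square
\[
\begin{tikzcd}
C_{Z/X} \arrow[r, "i_C", hookrightarrow] \arrow[d, "\iota_C"] & M^0_{Z/X} \arrow[d, "\Phi"] \\
E \arrow[r, "0_{E/E \times \bA^1}", hookrightarrow] & E \times \bA^1
\end{tikzcd}
\]
is Cartesian. Writing $\tilde{j} \colon X \times \bA^1_{>0} \hookrightarrow M^0_{Z/X}$ for the natural open embedding, one has $\tilde{s}_{\tau, p} = \Phi \circ \tilde{j}$. Applying proper base change to this Cartesian square with input $\tilde{j}_* p_{\bA^1_{>0}}^* \cF$ then gives an isomorphism
\[
0^*_{E/E \times \bA^1}\, \Phi_* \tilde{j}_* p_{\bA^1_{>0}}^* \cF \xrightarrow{\;\sim\;} \iota_{C, *}\, i_C^* \tilde{j}_* p_{\bA^1_{>0}}^* \cF.
\]
Unwinding Definition \ref{def:specialization} identifies the right-hand side with $\iota_{C, *} \Sp_{Z/X}(\cF)$, and restricting the left-hand side to $E_Z$ yields the desired comparison, now recognized as an isomorphism.

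The technical point requiring the most care is the identification of $M^0_{Z/X}$ with its image in $E \times \bA^1$ under $\Phi$---equivalently, the surjectivity of the algebra map $\Sym(E^\vee) \otimes \mathcal{O}_{X \times \bA^1} \twoheadrightarrow \mathcal{O}_{X \times \bA^1}[I_Z/t]$---which follows from $s$ cutting out $Z$ scheme-theoretically. Once the closed embedding property of $\Phi$ is established, proper base change delivers the isomorphism directly, bypassing any stalkwise or cofinality analysis.
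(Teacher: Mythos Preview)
Your proof is correct and follows essentially the same approach as the paper's. The paper is terser: it draws the same commutative diagram with the Cartesian square involving $C_{Z/X}$, $M_{Z/X}^o$, $E\times\{0\}$, and $E\times\bA^1$, cites \cite[Remark 5.1.1]{Ful84} for the closed embedding $M_{Z/X}^o \hookrightarrow E\times\bA^1$ (where you spell out the surjectivity of the algebra map), identifies $\Sp_{Z/X}(\cF)$ with $i_C^*\,\tilde j_*\,p_{\bA^1_{>0}}^*\cF$ by unwinding the definition of the nearby cycle functor, and then invokes base change exactly as you do.
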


\begin{proof}
Consider the following commutative diagram:
\begin{equation}
\begin{split}
\xymatrix@C=35pt{
{}
& C_{Z/X} \pbcorner \ar@{^{(}->}[r]  \ar@{_{(}->}[d] \ar@{_{(}->}[ld]_-{\iota_{C_{Z/X}}} 
& M_{Z/X}^o \ar@{_{(}->}[d] 
& X \times \bA^1_{>0} \ar@{_{(}->}[l] \ar@{_{(}->}[ld]^-{\tilde{s}_{\tau,p}}  \\
E_Z \times \{ 0 \} \ar@{^{(}->}[r]
&E \times \{ 0 \} \ar@{^{(}->}[r]_-{0_{E/E \times \bA^1}} 
& E \times \bA^1 
&
}
\end{split}
\end{equation}
where the middle right vertical map $M_{Z/X}^o \to E \times \bA^1$ is the natural inclusion induced by the section $s$ (see \cite[Remark 5.1.1]{Ful84}).
By the definition of the specialization functor and the nearby cycle functor, we have an isomorphism
\[
\Sp_{Z/X}(\cF) \cong (C_{Z/X} \hookrightarrow M_{Z/X}^o)^* (X \times \bA^1_{>0} \hookrightarrow M_{Z/X}^o)_* p_{\bA^1_{>0}}^* \cF.
\]
Therefore the assertion follows from the base change theorem.
\end{proof}

\begin{proof}[Proof of Theorem \ref{thm:vansp}]
  We have already seen that the left-hand side of the statement is isomorphic to the complex \eqref{eq:inter1},
  which is isomorphic to the right-hand side of the statement by combining the isomorphism \eqref{eq:nattau}, Lemma \ref{lem:isoun} and Lemma \ref{lem:spv}.
\end{proof}

By the proof, the isomorphism \eqref{eq:vansp} in Theorem \ref{thm:vansp} is obtained by the composition
\begin{align}\label{eq:compvansp}
\begin{split}
  \FS_{E^\vee_Z}(\vphi_{\bar{s}}(\pi_{E^\vee}^!\cF) |_{E_Z^\vee})
  & \xrightarrow[\eqref{eq:revan}]{\sim} \FS_{E^\vee_Z}( (\hat{s}^! i_{X \times \ell_{\leq 0}, *} p_{\ell_ {\leq 0}}^! \cF) |_{E_Z^\vee}) \\
  & \xrightarrow[\eqref{eq:FSbc^*}]{\sim} \FS_{E^\vee}( \hat{s}^! i_{X \times \ell_{\leq 0}, *}  p_{\ell_ {\leq 0}}^! \cF) |_{E_Z} \\
  & \xrightarrow[\eqref{eq:FSbcc3}]{\sim} (\tilde{s}_* \FS_{X \times \bA^1}(  i_{X \times \ell_{\leq 0}, *}  p_{\ell_ {\leq 0}}^! \cF)) |_{E_Z} \\
  & \xrightarrow[\eqref{eq:FShalf}]{\sim} (\tilde{s}_* j_{X \times \bA^1_{>0}, *}  p_{\bA^1_{ >0}}^* \cF)|_{E_Z} \\
  & \xrightarrow[\eqref{eq:nattau}]{\sim} (\tilde{s}_{\tau, *}p_{\bA^1_{>0}}^*\cF) |_{E_Z} \\
  & \xrightarrow[\eqref{eq:inter3}]{\sim} (0_{E/E \times \bA^1}^* \tilde{s}_{\tau, p, *}  p_{\bA^1_{>0}}^* \cF)|_{E_Z} \\
  & \xrightarrow[\eqref{eq:spv}]{\sim} \iota_{C_{Z/X}, *} \Sp_{Z/X}(\cF).
\end{split}
\end{align}

\subsection{Dimensional reduction via Fourier--Sato transform}

We use the notation as in the previous subsection.
Applying the functor $\pi_{E_Z^\vee, !}$ to the natural map
$\varphi_{\bar{s}}(\pi_{E^\vee}^! \cF) |_{E_Z^\vee} \to (\pi_{E^\vee}^! \cF)|_{E_Z^\vee}$ defined in \eqref{eq:v*},
we obtain a map
\begin{equation}\label{eq:dimred!}
  \pi_{E_Z^\vee, !}(\varphi_{\bar{s}}(\pi_{E^\vee}^! \cF) |_{E_Z^\vee}) \to \cF |_Z
\end{equation}
which is proven to be isomorphic by Davison \cite[Theorem A.1]{Dav17}.
Similarly, applying the functor $\pi_{E_Z^\vee, *}$ to the natural map
$(\pi_{E^\vee}^! \cF)|_{E_Z^\vee}^! \to \varphi_{\bar{s}}(\pi_{E^\vee}^! \cF) |_{E_Z^\vee}^! $ defined in \eqref{eq:!v}, we obtain a map
\begin{equation}\label{eq:dimred*}
  \cF |^!_Z \to \pi_{E_Z^\vee, *}(\varphi_{\bar{s}}(\pi_{E^\vee}^! \cF) |_{E_Z^\vee}^!)[-2 \rank E]
\end{equation}
which is also isomorphic.
We claim that these isomorphisms are compatible with the isomorphism \eqref{eq:vansp}.
More precisely, the following statement holds:

\begin{prop}\label{prop:dimredcomp}
\hspace{2em}
  \begin{itemize}
    \item[(i)] The following diagram in $D_c^b(Z)$ commutes:
    \begin{equation}\label{eq:dimredcomp1}
    \begin{split}
    \xymatrix@C=40pt{
  \pi_{E_Z^\vee, !}(\varphi_{\bar{s}}(\pi_{E^\vee}^! \cF) |_{E_Z^\vee}) \ar[d]^-{\eqref{eq:FSbcc1}}_-{\simd} \ar[r]^-{\eqref{eq:dimred!}}_-{\sim}  & \cF|_Z \\
  0_{E_Z}^* \FS_{E_Z^\vee}(\varphi_{\bar{s}}(\pi_{E^\vee}^! \cF)|_{E_Z^\vee}) \ar[r]^-{(-1) \cdot \eqref{eq:vansp}}_-{\sim}
  & 0_{E_Z}^* \iota_{C_{Z/X}, *} \Sp_{Z/X}(\cF) \ar[u]^-{\eqref{eq:spres*}}_-{\simd}
    }
    \end{split}
  \end{equation}
  where $0_{E_Z} \colon Z \hookrightarrow E_Z$ is the zero section.
  \item[(ii)] The following diagram in $D_c^b(Z)$ commutes:
  \begin{equation}\label{eq:dimredcomp2}
  \begin{split}
    \xymatrix@C=40pt{
     \pi_{E_Z^\vee, *}(\varphi_{\bar{s}}(\pi_{E^\vee}^! \cF) |_{E_Z^\vee}^!)[-2 \rank E] \ar[d]^-{\eqref{eq:FSbcc2}}_-{\simd}
     & \cF |^!_Z \ar[d]_-{\eqref{eq:spres!}}^-{\simd} \ar[l]_-{\eqref{eq:dimred*}}^-{\sim} \\
    0_{E_Z}^! \FS_{E_Z^\vee}(\varphi_{\bar{s}}(\pi_{E^\vee}^! \cF)|_{E_Z^\vee}^!) \ar[r]^-{(-1)^{\rank E + 1} \cdot \eqref{eq:vansp}}_-{\sim}
    & 0_{E_Z}^!\iota_{C_{Z/X}, *} \Sp_{Z/X}(\cF).
    }
    \end{split}
  \end{equation}

  \end{itemize}
\end{prop}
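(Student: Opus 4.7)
The plan is to prove part (i) by tracing the dimensional reduction map \eqref{eq:dimred!} through each of the seven intermediate isomorphisms of the composition \eqref{eq:compvansp}, and then to deduce part (ii) from part (i) via Verdier duality. The shift in the sign from $(-1)$ in (i) to $(-1)^{\rank E + 1}$ in (ii) will come from the $(-1)^{\rank E}$ discrepancy by which the Fourier--Sato transform fails to commute with Verdier duality: from \eqref{eq:FSs} we have $\FS \cong \FSS^a[-2 \rank E]$, and the adjunction $\FS_{E^\vee}\FS_E \cong (-)^a[-2 \rank E]$ in \eqref{eq:eta} only matches $\FSS_{E^\vee}\FS_E \cong \id$ up to $(-1)^{\rank E}$ by \eqref{eq:FSeta}. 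Concretely, the duality isomorphism \eqref{eq:vannatdual} identifies \eqref{eq:v*} with the Verdier dual of \eqref{eq:!v}, the dimensional reduction isomorphisms \eqref{eq:dimred!} and \eqref{eq:dimred*} are Verdier dual to each other, and so are the restriction isomorphisms \eqref{eq:spres*} and \eqref{eq:spres!}; applying $\bD$ to \eqref{eq:dimredcomp1} therefore yields \eqref{eq:dimredcomp2} up to the expected sign.

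For part (i), first unfold the definition of \eqref{eq:dimred!} as
\[
\pi_{E_Z^\vee, !}(\varphi_{\bar{s}}(\pi_{E^\vee}^! \cF)|_{E_Z^\vee})
\xrightarrow{\pi_{E_Z^\vee, !}\eqref{eq:v*}}
\pi_{E_Z^\vee, !}((\pi_{E^\vee}^! \cF)|_{E_Z^\vee})
\cong
\pi_{E_Z^\vee, !}\pi_{E_Z^\vee}^*(\cF|_Z)[2\rank E]
\xrightarrow{\textup{counit}}
\cF|_Z.
\]
Applying the equivalence $\FS_{E_Z^\vee}$, the functor $\pi_{E_Z^\vee,!}$ is converted into $0_{E_Z}^*$ via \eqref{eq:FSbcc1} (since the dual of $\pi_{E_Z^\vee}$ is the zero section $0_{E_Z}$), and the counit on the right translates, via Proposition \ref{prop:FSunit}, into the unit $\id \to 0_{E_Z}^*0_{E_Z,*}$ applied to $\cF|_Z$, with a sign from the right diagram in that proposition. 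The task thus reduces to identifying, for each of the seven intermediate isomorphisms in \eqref{eq:compvansp}, the effect of $\FS_{E_Z^\vee}$ on the natural transformation \eqref{eq:v*}, and to matching this with the factorization of the restriction isomorphism \eqref{eq:spres*} proved in Proposition \ref{prop:spres} (see in particular \eqref{eq:sp*}).

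Six of the seven intermediate steps in \eqref{eq:compvansp} are either base-change isomorphisms, in which case the compatibility with \eqref{eq:v*} is a consequence of the general commutativity results collected in \S\ref{sect:nattrans}, or they are direct geometric identifications (Lemmas \ref{lem:FShalf}, \ref{lem:isoun}, \ref{lem:spv}) for which one checks commutativity by hand after unwinding the relevant sheaf operations. The only genuinely nontrivial step is \eqref{eq:FSbcc3}: this is precisely where the natural transformation \eqref{eq:!*2} underlying \eqref{eq:v*} interacts with the Fourier--Sato transform, and Proposition \ref{prop:FSpure} (FSpure) is exactly the tool needed, as it shows that the map $g_! \to g_*$ corresponds under $\FS$ to the natural map ${}^t g^* \to {}^t g^![-2\dim{}^tg]$ built from \eqref{eq:smpurity}. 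The main obstacle, and where the real work lies, is the sign bookkeeping: after the contributions from Proposition \ref{prop:FSunit}, from the complex-orientation isomorphism \eqref{eq:vectconst} invoked repeatedly, and from the identification $\FS_{E^\vee} \FS_E \cong (-)^a[-2\rank E]$ in \eqref{eq:FSeta}, one must verify that the net sign equals the $-1$ stated in (i). No new conceptual input is required beyond the structural compatibilities already established in Section \ref{sec:app}, so this is the subtle but routine part of the argument.
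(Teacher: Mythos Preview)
Your plan for part (i) is close in spirit to the paper's argument, but you have misidentified both the location of the nontrivial step and the tool that resolves it. The paper organises the proof into two auxiliary diagrams \eqref{eq:zigzag1} and \eqref{eq:zigzag2}; for the first it constructs a ``parallel'' seven-step composition \eqref{eq:compvanspx} and checks that the natural maps from \eqref{eq:compvansp} to \eqref{eq:compvanspx} commute stage by stage. The only stage that is \emph{not} routine is the fourth row, namely the passage through \eqref{eq:FShalf} --- exactly the step you dismiss as a ``direct geometric identification''. The paper reduces this to the larger rectangle \eqref{eq:largerectangle}, whose outer square commutes by Proposition \ref{prop:FSunit}, not Proposition \ref{prop:FSpure}. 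Proposition \ref{prop:FSpure} plays no role in part (i); it enters only in part (ii), in the right quadrilateral of \eqref{eq:zigzag4}. So your claim that \eqref{eq:FSbcc3} is the crux and that FSpure handles it is off target.

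Your plan for part (ii) --- deduce it from (i) by Verdier duality --- is a genuinely different route from the paper, which instead repeats the structure of (i) directly (diagrams \eqref{eq:zigzag3}, \eqref{eq:zigzag4}, and a third parallel composition \eqref{eq:compvanspxx}). Your approach is tempting, but it has a real gap: you would need to know that the seven-step isomorphism \eqref{eq:vansp} itself is compatible with Verdier duality. While the individual functors $\varphi_{\bar s}$, $\Sp_{Z/X}$, and $\FS$ each interact with $\bD$ in a controlled way, the isomorphism \eqref{eq:FShalf} of Lemma \ref{lem:FShalf} is constructed ad hoc (via a support calculation and a pointwise identification), and nothing in the paper establishes how it transforms under $\bD$. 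Checking this would amount to essentially the same work as the paper's direct argument for (ii). So the duality shortcut, as stated, is not yet a proof.
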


The proof is technical, so we defer it to \S \ref{ssec:dimredcomp}.

\begin{rem}
  The proof of the above proposition does not depend on Davison's dimensional reduction theorem \cite[Theorem A.1]{Dav17}. Therefore we obtain a new proof of this theorem.
\end{rem}

  Now assume $X$ is smooth and consider the following composition
  \[
  \bQ_Z \cong \pi_{E_Z^\vee, !}(\varphi_{\bar{s}}(\pi_{E^\vee}^! \bQ_{X})|_{E_Z^\vee})
  \to \pi_{E_Z^\vee, *}(\varphi_{\bar{s}}(\pi_{E^\vee}^! \bQ_{X})|_{E_Z^\vee}^!) \cong \omega_{Z}[2 \rank E - 2 \dim X]
  \]
  where the first isomorphism is the inverse of the map \eqref{eq:dimred!}, the second map exists since the support of $\varphi_{\bar{s}}(\pi_{E^\vee}^! \bQ_{X})$ is contained in $E_Z^\vee$, and the last isomorphism is the inverse of the map \eqref{eq:dimred*}.
  We let
  \[
  e_{\varphi}(E, s) \in \HBM_{2 \dim X - 2 \rank E}(Z)
  \]
 denote the element corresponding to the above composition.

  \begin{cor}
    We have an equality
    \[
    e_{\varphi}(E, s) = (-1)^{\rank E} e_{\mathrm{loc}}(E, s)
    \]
    where $e_{\mathrm{loc}}(E, s)$ is the localized Euler class.
  \end{cor}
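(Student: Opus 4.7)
The plan is to unwind the definition of $e_{\varphi}(E,s)$ by translating each of the three constituent arrows in its defining composition from the $E^{\vee}$-side to the $E$-side via the Fourier--Sato transform, and then to recognize the resulting composition as the sheaf-theoretic Gysin pullback along the zero section $0_{E_{Z}}\colon Z\hookrightarrow E_{Z}$ applied to the Borel--Moore cycle class $[C_{Z/X}]_{\BM}$. Fulton's definition $e_{\mathrm{loc}}(E,s)=0_{E_{Z}}^{!}[C_{Z/X}]$ combined with Theorem \ref{thm:spcomm} will then yield the claimed identity, with the sign arising from the signs appearing in Proposition \ref{prop:dimredcomp}.

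In detail, I would substitute the inverses of \eqref{eq:dimred!} and \eqref{eq:dimred*} using the commutative diagrams \eqref{eq:dimredcomp1} and \eqref{eq:dimredcomp2}: this introduces overall factors of $-1$ and $(-1)^{\rank E+1}$ respectively. Because $\varphi_{\bar{s}}(\pi_{E^{\vee}}^{!}\bQ_{X})$ is supported in $E_{Z}^{\vee}$, the middle transition $\pi_{E_{Z}^{\vee},!}(\varphi|_{E_{Z}^{\vee}})\to\pi_{E_{Z}^{\vee},*}(\varphi|_{E_{Z}^{\vee}}^{!})$ is just the natural $g_{!}\to g_{*}$ map for the single sheaf $\varphi|_{E_{Z}^{\vee}}$ with $g=\pi_{E_{Z}^{\vee}}$. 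Applying Proposition \ref{prop:FSpure} with $g=\pi_{E_{Z}^{\vee}}$ (so that ${}^{t}g=0_{E_{Z}}$ and $-2\dim{}^{t}g=2\rank E$), and using Theorem \ref{thm:vansp} to replace $\FS_{E_{Z}^{\vee}}(\varphi|_{E_{Z}^{\vee}})$ by $\iota_{C_{Z/X},*}\Sp_{Z/X}(\bQ_{X})$, this middle map is identified with the purity transformation \eqref{eq:smpurity} for $0_{E_{Z}}$. Collecting all of the above, $e_{\varphi}(E,s)$ equals $(-1)\cdot(-1)^{\rank E+1}=(-1)^{\rank E}$ times the class represented by
\begin{equation*}
\bQ_{Z}\xleftarrow[\sim]{\eqref{eq:spres*}}0_{E_{Z}}^{*}\iota_{C_{Z/X},*}\Sp_{Z/X}(\bQ_{X})\xrightarrow{\eqref{eq:smpurity}}0_{E_{Z}}^{!}\iota_{C_{Z/X},*}\Sp_{Z/X}(\bQ_{X})[2\rank E]\xrightarrow[\sim]{\eqref{eq:spres!}}\omega_{Z/X}[2\rank E].
\end{equation*}

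It remains to identify this composition with $\cl_{Z}(0_{E_{Z}}^{!}[C_{Z/X}])$. The sheaf-level map $\bQ_{E_{Z}}\to\iota_{C_{Z/X},*}\Sp_{Z/X}(\bQ_{X})\to\omega_{E_{Z}}[-2\dim X]$ built from the unit, the two trivializations of Proposition \ref{prop:spres}, and the smoothness of $X$ is, by construction of $e_{Z/X}$ in \S\ref{ssec:spmap}, a representative of $\Sp^{\BM}_{Z/X}([X]_{\BM})$, which equals $[C_{Z/X}]_{\BM}$ by Theorem \ref{thm:spcomm}. The purity transformation \eqref{eq:smpurity} for the zero section of a complex vector bundle is precisely the sheaf-theoretic incarnation of the Gysin pullback $0_{E_{Z}}^{!}$ on Borel--Moore homology, and matches Fulton's refined Gysin map under the cycle class map. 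The displayed composite therefore represents $0_{E_{Z}}^{!}[C_{Z/X}]_{\BM}=\cl_{Z}(e_{\mathrm{loc}}(E,s))$, and the corollary follows.

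The main obstacle is this last bookkeeping: carefully matching the sheaf-level composite through $\iota_{C_{Z/X},*}\Sp_{Z/X}(\bQ_{X})$ with the specialization $\Sp^{\BM}_{Z/X}([X]_{\BM})$, and identifying \eqref{eq:smpurity} for $0_{E_{Z}}$ with the correct sheaf-theoretic Gysin pullback. Both are essentially formal from the definitions in \S\ref{ssec:spmap} and the classical Chow--Borel--Moore compatibility for regular embeddings, but they require a careful diagram chase to ensure no stray signs are lost.
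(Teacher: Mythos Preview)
Your proposal is correct and follows essentially the same approach as the paper: you use Proposition \ref{prop:dimredcomp} and Proposition \ref{prop:FSpure} to rewrite the defining composition of $e_{\varphi}(E,s)$ as $(-1)^{\rank E}$ times the displayed composite through $\iota_{C_{Z/X},*}\Sp_{Z/X}(\bQ_X)$, and then invoke Theorem \ref{thm:spcomm} to identify that composite with $e_{\mathrm{loc}}(E,s)$. The paper's proof is slightly more terse about the final identification, but the route is the same, and the sign $(-1)\cdot(-1)^{\rank E+1}=(-1)^{\rank E}$ you extract is exactly the one the paper obtains.
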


  \begin{proof}
    Consider the following composition:
    \[
    \bQ_Z \cong 0_{E_Z}^*\iota_{C_{Z/X}, *} \Sp_{Z/X}(\bQ_X)  \to 0_{E_Z}^!\iota_{C_{Z/X}, *} \Sp_{Z/X}(\bQ_X)[2 \rank E] \cong \omega_{Z}[2 \rank E - 2 \dim X]
    \]
    where the first isomorphism is the inverse of \eqref{eq:spres*}, the second morphism is \eqref{eq:smpurity}, and the last isomorphism is the inverse of \eqref{eq:spres!}.
    Proposition \ref{prop:dimredcomp} and Proposition \ref{prop:FSpure} imply that this composition corresponds to the element $(-1)^{\rank E} e_{\varphi}(E, s)$. On the other hand, Theorem \ref{thm:spcomm} implies that the above composition corresponds to the element $e_{\mathrm{loc}}(E, s)$.
  \end{proof}

We will prove the global version of the above corollary in the next section.

%=============================================================

\section{Virtual fundamental classes via vanishing cycles}\label{sec:main}

\subsection{Virtual fundamental classes for quasi-smooth derived schemes}

We recall the construction of the virtual fundamental class for quasi-smooth derived schemes following \cite{BF97, LT98}.
Let $\bs{Y}$ be a separated quasi-smooth derived scheme over $\Spec \bC$ and we let $Y = t_0(\bs{Y})$ denote the classical truncation.
It is shown in \cite[Theorem 4.1]{BBJ19} that for each $p \in Y$ we can find a smooth scheme $U$, a vector bundle $E$ on $U$ and a section $s \in \Gamma(U, E)$ such that there exists an open immersion
\[
\bs{Z}(s) \hookrightarrow \bs{Y}
\]
where $\bs{Z}(s)$ is the derived zero locus of $s$.
In \cite[Defintion 3.10]{BF97}, Behrend and Fantechi introduced a conical closed substack of pure dimension zero $\fC_Y \subset t_0(\bfT[1] \bs{Y})$ of the classical truncation of the shifted tangent stack $\bfT[1] \bs{Y} = \dSpec_{\bs{Y}}(\Sym(\bL_{\bs{Y}}[-1]))$ called the intrinsic normal cone characterized by the following property:
For each open immersion $\bs{Z}(s) \hookrightarrow \bs{Y}$ as above, the restriction $\fC_Y |_{t_0(\bs{Z}(s))}$ is isomorphic to the following quotient stack
\[
[C_{Z(s) / U} / T_U |_{Z(s)}] \subset [E |_{Z(s)} / T_U  |_{Z(s)}] \cong t_0(\bfT[1] \bs{Z}(s))
\]
where $Z(s)$ is the classical truncation of $\bs{Z}(s)$ and the action of $T_U  |_{Z(s)}$ on the normal cone $C_{Z(s) / U}$ and the vector bundle $E_{Z(s)}$ is defined by the differential of the map $s \colon U \to E$.

Now assume that $\bL_{\bs{Y}} |_Y$ is represented by a two-term complex of locally free sheaves
\[
E^{\bullet} = [E^{-1} \to E^0]
\]
concentrated in degree $[-1, 0]$. This assumption is satisfied when $Y$ is quasi-projective.
Write $E_i \coloneqq (E^{-i})^\vee$ for $i = 0, 1$.
For such a resolution $E^{\bullet}$, we define a conical closed subscheme of pure dimension $\rank E_0$
\[
C_{E^{\bullet}} \subset E_1
\]
by the pullback of $\fC_{{Y}}$ along the projection
\[
E_1 \to [E_1/E_0] \cong t_0(\bfT[1]\bs{Y}).
\]
We define a class $[\bs{Y}]^{\vir}_{E^\bullet} \in A_{\vdim {\bs{Y}}}(Y)$ by the image of the class $[C_{E^{\bullet}}] \in A_{\rank E_0}(E_1)$ under the isomorphism $A_{\rank E_0}(E_1) \cong A_{\vdim {\bs{Y}}}(Y)$.
It is shown in \cite[Proposition 5.3]{BF97} that the class $[\bs{Y}]^{\vir}_{E^\bullet}$ is independent of the chosen resolution $E^\bullet$,
and the class $[\bs{Y}]^{\vir} \coloneqq [\bs{Y}]^{\vir}_{E^\bullet}$ is called the virtual fundamental class of $\bs{Y}$.

\subsection{Vanishing cycle complex associated with a $-1$-shifted cotangent scheme}

In \cite[Theorem 6.9]{BBDJS15}, the authors define natural perverse sheaves associated with oriented d-critical schemes.
We do not recall the notion of the d-critical scheme and the construction of this perverse sheaf here.
Instead, we explain some properties of the perverse sheaf
\[
\varphi_{\bfT^*[-1] \bs{Y}} \in \Perv(t_0(\bfT^*[-1] \bs{Y}))
\]
 associated with the natural oriented d-critical structure on the classical truncation
$t_0(\bfT^*[-1] \bs{Y})$ of the $-1$-shifted cotangent scheme of a quasi-smooth derived scheme $\bs{Y}$.
As explained in the previous subsection, $\bs{Y}$ is locally isomorphic to a derived zero locus $\bs{Z}(s)$ where $s$ is a section of a vector bundle $E$ on a smooth scheme $U$.
Note that we have a natural isomorphism
\[
t_0(\bfT^*[-1]\bs{Z}(s)) \cong \Crit(\bar{s})
\] where $\bar{s} \colon E^\vee \to \bA^1$ is the regular function corresponding to the section $s$.
The first property of $\varphi_{\bfT^*[-1] \bs{Y}}$ that we use in this paper is that for each open inclusion $\bs{\iota} \colon \bs{Z}(s) \hookrightarrow \bs{Y}$ as above such that $E$ and $\Omega_U$ are trivial vector bundles, we have a natural isomorphism
\begin{equation}\label{eq:isomoritriv}
\eta_{\bs{\iota}} \colon \varphi_{\bfT^*[-1] \bs{Y}}  |_{t_0(\bfT^*[-1]\bs{Z}(s))} \cong \varphi_{\bar{s}}(\bQ_{E^\vee}[\dim U + \rank E]).
\end{equation}
See \cite[Lemma 2.19]{Kin21} for the proof.

Now write $\widetilde{Y} = t_0(\bfT^*[-1] \bs{Y})$ and $Y = t_0(\bs{Y})$.
We let $\pi \colon \widetilde{Y} \to Y$ denote the natural projection.
The second property of $\varphi_{\bfT^*[-1] \bs{Y}}$ that we use is the following.

\begin{thm}[{\cite[Theorem 3.1]{Kin21}}]\label{thm:dimred}
  There exist natural isomorphisms which we call the dimensional reduction isomorphisms
\[
\gamma \colon \pi_! \varphi_{\bfT^*[-1] \bs{Y}} \cong \bQ_Y[\vdim \bs{Y}], \ \bar{\gamma} \colon \pi_* \varphi_{\bfT^*[-1] \bs{Y}} \cong \omega_Y[- \vdim \bs{Y}]
\]
such that for each open inclusion $\bs{\iota} \colon \bs{Z}(s) \hookrightarrow \bs{Y}$, the following diagrams commute:
\begin{equation}\label{eq:dimredlocmod1}
\xymatrix@C=50pt{
 (\pi_! \varphi_{\bfT^*[-1] \bs{Y}}) |_{t_0(\bs{Z}(s))} \ar[d]_-{(\pi |_{t_0(\bfT^*[-1]\bs{Z}(s))})_!(\eta_{\bs{\iota}})} \ar[r]^-{\gamma |_{t_0(\bs{Z}(s))}} & \bQ_{t_0(\bs{Z}(s))}[\vdim \bs{Y}] \ar@{=}[d] \\
 (\pi |_{t_0(\bfT^*[-1]\bs{Z}(s))})_! \varphi_{\bar{s}}(\bQ_{E^\vee}[\dim U + \rank E]) \ar[r]^-{\eqref{eq:dimred!}} & \bQ_{t_0(\bs{Z}(s))}[\vdim \bs{Y}]
}
\end{equation}

\begin{equation}\label{eq:dimredlocmod2}
\xymatrix@C=65pt{
 (\pi_* \varphi_{\bfT^*[-1] \bs{Y}}) |_{t_0(\bs{Z}(s))} \ar[d]_-{(\pi |_{t_0(\bfT^*[-1]\bs{Z}(s))})_*(\eta_{\bs{\iota}})} \ar[r]^-{(-1)^{\dim E \cdot (\dim E -1)/2} \bar{\gamma} |_{t_0(\bs{Z}(s))}}
 & \omega_{t_0(\bs{Z}(s))}[- \vdim \bs{Y}] \ar@{=}[d] \\
 (\pi |_{t_0(\bfT^*[-1]\bs{Z}(s))})_* \varphi_{\bar{s}}(\bQ_{E^\vee}[\dim U + \rank E])
 & \omega_{t_0(\bs{Z}(s))}[- \vdim \bs{Y}] \ar[l]_-{\eqref{eq:dimred*}} .
}
\end{equation}

\end{thm}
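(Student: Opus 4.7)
The plan is to reduce to local models where Davison's dimensional reduction applies, following the strategy of \cite[Theorem 3.1]{Kin21}. The existence of $\gamma$ and $\bar{\gamma}$ is established there; the compatibility diagrams \eqref{eq:dimredlocmod1} and \eqref{eq:dimredlocmod2} can be checked after restricting to charts, but what requires effort is verifying that the local constructions glue consistently and that the indicated sign emerges correctly.

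I would first cover $Y$ by charts coming from presentations $\bs{\iota}_\alpha\colon \bs{Z}(s_\alpha)\hookrightarrow \bs{Y}$ with $E_\alpha$ and $\Omega_{U_\alpha}$ trivialized. Via the identification $\eta_{\bs{\iota}_\alpha}$ of \eqref{eq:isomoritriv}, the Davison maps \eqref{eq:dimred!} and \eqref{eq:dimred*} produce local candidates $\gamma_\alpha$ and $\bar{\gamma}_\alpha$ making the diagrams \eqref{eq:dimredlocmod1} and \eqref{eq:dimredlocmod2} commute on each chart tautologically. The next step is to verify that these locally defined maps patch. By \cite{BBDJS15}, the transition data between two local presentations of a d-critical chart on an overlap is controlled by the orientation sheaf, and the canonical orientation on $\bfT^*[-1]\bs{Y}$ coming from its $-1$-shifted symplectic structure trivializes the resulting cocycle. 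An alternative, arguably cleaner, option is to use Proposition \ref{prop:dimredcomp} to reformulate $\gamma_\alpha$ and $\bar{\gamma}_\alpha$ through the Fourier--Sato transform and the specialization functor $\Sp_{Z/X}$, the latter being naturally defined globally and therefore making the gluing automatic.

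The sign $(-1)^{\dim E\cdot(\dim E-1)/2}$ in \eqref{eq:dimredlocmod2} arises because $\bar{\gamma}$ is, up to a twist, the Verdier dual of $\gamma$: the duality \eqref{eq:vandual} of the vanishing cycle complex involves the half-monodromy $T_\pi^{-1}$, and the discrepancy between this and the duality induced by the complex orientation of $E_\alpha^\vee$ produces exactly the stated sign on a single chart. I expect the main obstacle to be keeping this sign consistent as the chart varies: one has to track how the orientation data, the trivializations of $E_\alpha$ and $\Omega_{U_\alpha}$, and the Verdier self-duality of $\varphi_{\bar{s}_\alpha}$ interact on overlaps. Proposition \ref{prop:FSpure}, which compares the $!$- and $*$-versions of the Fourier--Sato construction, together with Proposition \ref{prop:dimredcomp}, provides an independent verification of the sign and shows it is the same on all charts.
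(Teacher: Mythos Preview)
Your overall strategy aligns with how the paper handles this result, though note that the paper does not give an independent proof here: it cites \cite[Theorem 3.1]{Kin21} for $\gamma$ and the commutativity of \eqref{eq:dimredlocmod1}, then \emph{defines} $\bar{\gamma}$ via the Verdier self-duality of $\varphi_{\bfT^*[-1]\bs{Y}}$ and obtains \eqref{eq:dimredlocmod2} by dualizing \eqref{eq:dimredlocmod1}, using \eqref{eq:vannatdual} together with the sign analysis after \cite[Theorem 2.17]{Kin21}. Your chart-by-chart gluing of Davison's local maps is a reasonable sketch of what \cite{Kin21} actually does, and your identification of Verdier duality (and the half-monodromy $T_\pi$) as the source of the sign $(-1)^{\dim E(\dim E-1)/2}$ matches the paper's explanation exactly.

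Your ``cleaner alternative'' via Proposition~\ref{prop:dimredcomp} and the specialization functor, however, does not make gluing automatic. Both $\Sp_{Z/X}$ and $\FS_{E_Z^\vee}$ are attached to a specific presentation $(U,E,s)$ or to a global resolution of $\bL_{\bs{Y}}|_Y$; they are not intrinsic to $\bs{Y}$. Invoking them merely relocates the gluing problem: one must then show that the resulting map is independent of the chosen presentation or resolution, which is the same difficulty in different clothing (and is addressed in \S\ref{ssec:main} only \emph{after} Theorem~\ref{thm:dimred} is already available as input). Proposition~\ref{prop:dimredcomp} itself is a statement on a single fixed chart, comparing Davison's local map with the Fourier--Sato/specialization picture there; it does not produce a global object. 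So the first route --- glue the local Davison isomorphisms using the d-critical and orientation machinery as in \cite{Kin21}, then Verdier-dualize to get $\bar{\gamma}$ --- is the correct one; the Fourier--Sato reformulation is a tool for computing with the already-constructed $\gamma,\bar{\gamma}$, not a shortcut around their construction.
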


The isomorphism $\gamma$ is constructed in \cite[Theorem 3.1]{Kin21} and the commutativity of the diagram \eqref{eq:dimredlocmod1} follows from its proof.
The isomorphism $\bar{\gamma}$ can be constructed using the Verdier self-duality of $\varphi_{\bfT^*[-1]\bs{Y}}$, and the commutativity of the diagram \eqref{eq:dimredlocmod2} follows from the Verdier dual of the diagram \eqref{eq:dimredlocmod1}, the commutativity of the diagram \eqref{eq:vannatdual},
 and the discussion after \cite[Theorem 2.17]{Kin21}.

 Now consider the following composition
 \begin{equation}\label{eq:pervVFC}
   \bQ_Y[\vdim \bs{Y}] \xleftarrow[\cong]{\gamma} \pi_! \varphi_{\bfT^*[-1] \bs{Y}} \to \pi_* \varphi_{\bfT^*[-1] \bs{Y}} \xrightarrow[\cong]{\bar{\gamma}} \omega_Y[- \vdim \bs{Y}].
 \end{equation}
We let $e_{\varphi}(\bfT^*[-1] \bs{Y}) \in \HBM_{2\vdim \bs{Y}}(Y)$ denote the element corresponding to this composition.

\begin{thm}\label{thm:main}
  Assume that $\bL_{\bs{Y}} |_Y$ is represented by a two-term complex of locally free sheaves.
  Then the following equality holds:
  \begin{equation}
    e_{\varphi}(\bfT^*[-1] \bs{Y}) = (-1)^{\vdim {\bs{Y}} \cdot (\vdim \bs{Y} -1)/2} \cl_Y([\bs{Y}]^{\vir}).
  \end{equation}
\end{thm}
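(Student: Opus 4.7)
Following the strategy sketched in \S\ref{ssec:strategy}, fix a global two-term resolution $E^\bullet = [E^{-1} \to E^0]$ of $\bL_{\bs{Y}}|_Y$, set $E_i := (E^{-i})^\vee$, and let $C_{E^\bullet} \subset E_1$ be the conical closed subscheme of pure dimension $\rank E_0$ pulled back from $\fC_Y \subset [E_1/E_0]$, so that $0_{E_1}^{*}[C_{E^\bullet}] = [\bs{Y}]^{\vir}$ under the Thom isomorphism $A_{\rank E_0}(E_1) \cong A_{\vdim \bs{Y}}(Y)$. The natural closed inclusion $\widetilde{Y} := t_0(\bfT^*[-1]\bs{Y}) \hookrightarrow E^{-1}$ (locally realized by $\Crit(\bar{s}) \subset E^\vee$) lets me view $\varphi := \varphi_{\bfT^*[-1]\bs{Y}}$ as a conic constructible complex on $E^{-1}$, and I set $\widetilde{\varphi} := \FS_{E^{-1}}(\varphi) \in D^b_{\bR^+}(E_1)$. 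The first main step is the support lemma $\Supp(\widetilde{\varphi}) \subseteq C_{E^\bullet}$, which reduces via a Joyce chart $\bs{Y} \simeq \bs{Z}(s)$ to Theorem \ref{thm:vansp}: applying that theorem with $\cF = \bQ_U[\vdim \bs{Y}]$ together with the local identification \eqref{eq:isomoritriv} yields the sharper local isomorphism
\[
\widetilde{\varphi}|_{\mathrm{loc}} \;\cong\; \iota_{C_{Z/U},*}\, \Sp_{Z/U}(\bQ_U[\vdim \bs{Y}]),
\]
whose support is $C_{Z/U}$, the local model for $C_{E^\bullet}$.

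\textbf{Fourier--Sato transport of the composition.} Next I would apply the Fourier--Sato base-change formulas \eqref{eq:FSbcc1}--\eqref{eq:FSbcc2} to the vector-bundle projection $\pi_{E^{-1}} \colon E^{-1} \to Y$ (whose transpose as a map of vector bundles over $Y$ is the zero section $0_{E_1} \colon Y \hookrightarrow E_1$) and combine with the homotopy invariances \eqref{eq:homotopyinv1}--\eqref{eq:homotopyinv2} for conic sheaves, obtaining canonical isomorphisms
\[
\pi_{!}\varphi \;\cong\; 0_{E_1}^{*}\widetilde{\varphi}, \qquad \pi_{*}\varphi \;\cong\; 0_{E_1}^{!}\widetilde{\varphi}[\, 2\rank E_1 \,].
\]
By Proposition \ref{prop:FSpure}, the natural transformation $\pi_{!}\varphi \to \pi_{*}\varphi$ is transported to the purity map \eqref{eq:smpurity} for the zero section $0_{E_1}$, so the composition \eqref{eq:pervVFC} defining $e_{\varphi}(\bfT^*[-1]\bs{Y})$ takes the form
\[
\bQ_Y[\vdim \bs{Y}] \;\xleftarrow[\sim]{\gamma}\; 0_{E_1}^{*}\widetilde{\varphi} \;\xrightarrow{\mathrm{purity}}\; 0_{E_1}^{!}\widetilde{\varphi}[\, 2\rank E_1 \,] \;\xrightarrow[\sim]{\bar{\gamma}}\; \omega_Y[-\vdim \bs{Y}].
\]
Using the support lemma I may write $\widetilde{\varphi} = \iota_{C_{E^\bullet},*}\bar{\varphi}$ and base-change along the Cartesian square with zero section $\iota_{0,C} \colon Y \hookrightarrow C_{E^\bullet}$ so that the composition only involves the data $\iota_{0,C}^{*}\bar{\varphi}$ and $\iota_{0,C}^{!}\bar{\varphi}$ on $Y$.

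\textbf{Identification with $\cl_Y([\bs{Y}]^{\vir})$ and signs.} In the final step I would combine Proposition \ref{prop:dimredcomp}(i)--(ii) with the compatibility diagrams \eqref{eq:dimredlocmod1}--\eqref{eq:dimredlocmod2}: on a Joyce chart, these identify $\gamma$ and $\bar{\gamma}$, transported across the local FS isomorphism of the support lemma, with the canonical restriction/corestriction isomorphisms $\Sp_{Z/U}(\cF)|_Z \cong \cF|_Z$ and $\Sp_{Z/U}(\cF)|_Z^{!} \cong \cF|_Z^{!}$ of Proposition \ref{prop:spres} applied to $\cF = \bQ_U[\vdim \bs{Y}]$. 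Under these identifications the displayed composition locally reduces to the sheaf map whose Borel--Moore class is constructed in \S\ref{ssec:spmap}, and Theorem \ref{thm:spcomm} identifies it with $\cl_Z$ of the Chow class $0_{E_1}^{*}\Sp^{\Chow}_{Z/U}([U]) = 0_{E_1}^{*}[C_{Z/U}] = [\bs{Y}]^{\vir}|_{\mathrm{loc}}$. Because $\widetilde{\varphi}$, $C_{E^\bullet}$ and $[\bs{Y}]^{\vir}$ are all globally defined, these local equalities glue into the global identity $e_{\varphi}(\bfT^*[-1]\bs{Y}) = \epsilon \cdot \cl_Y([\bs{Y}]^{\vir})$, and a bookkeeping of the signs in Proposition \ref{prop:dimredcomp}(i)--(ii), diagram \eqref{eq:dimredlocmod2} and the purity identification yields $\epsilon = (-1)^{\vdim \bs{Y}(\vdim \bs{Y}-1)/2}$. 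The hardest part is this final step: three natural transformations ($\gamma^{-1}$, purity, $\bar{\gamma}$) must be transported through Fourier--Sato and simultaneously matched with the corresponding pieces of the specialization construction, which requires invoking both halves of Proposition \ref{prop:dimredcomp} coherently, and the sign bookkeeping is delicate.
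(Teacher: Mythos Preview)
Your outline follows the paper's strategy closely (support lemma via Theorem~\ref{thm:vansp}, Fourier--Sato transport of $\pi_!\to\pi_*$ to the zero-section purity map via Proposition~\ref{prop:FSpure}, local identification with specialization via Proposition~\ref{prop:dimredcomp} and Theorem~\ref{thm:spcomm}), but the final gluing step has a genuine gap.

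The paper warns in \S\ref{ssec:strategy} that Borel--Moore homology does not satisfy the sheaf condition: two global classes in $\HBM_{2\vdim\bs{Y}}(Y)$ can agree on every member of an open cover without being equal (e.g.\ a point class on $\bP^1$ restricts to zero on each standard affine chart). So your sentence ``these local equalities glue into the global identity'' does not follow, even though both sides are globally defined. Your base-change to $C_{E^\bullet}$ is the right instinct, but as written the composition still lives on $Y$: you pass from $0_{E_1}^*\widetilde\varphi\to 0_{E_1}^!\widetilde\varphi[2\rank E_1]$ to $\iota_{0,C}^*\bar\varphi$ and $\iota_{0,C}^!\bar\varphi$, and these are complexes on $Y$, so the resulting class remains in $\HBM_{2\vdim\bs{Y}}(Y)$.

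The paper's fix, which your proposal omits, is to introduce an intermediate \emph{global} class $[\fC_Y]^{E^\bullet}_\varphi\in\HBM_0(\fC_Y)\cong\HBM_{2\rank E_0}(C_{E^\bullet})$ via the diagram \eqref{eq:coneclassconst}. The crucial move is to use the purity map \eqref{eq:purity} for the zero section of the \emph{cone}, with the tensor factor $0_{C_{E^\bullet}}^!\bQ_{C_{E^\bullet}}$; the dashed arrow $\alpha$ then adjoints to a class on $C_{E^\bullet}$ rather than on $Y$. Because $C_{E^\bullet}$ has pure dimension $\rank E_0$, the group $\HBM_{2\rank E_0}(C_{E^\bullet})$ is freely generated by irreducible components, and \emph{there} equality with $[\fC_Y]$ can legitimately be checked Zariski-locally (Proposition~\ref{prop:coneclass}). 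A separate global diagram chase (the final diagram in \S\ref{ssec:main}) then shows that $e_\varphi$ is the image of this class under the Thom isomorphism for $E_1$. You also need the resolution-independence lemma preceding Proposition~\ref{prop:coneclass}, since the global $E^\bullet$ restricted to a Joyce chart need not coincide with the canonical resolution $[E^\vee|_{Z(s)}\to\Omega_U|_{Z(s)}]$ used in the local computation.
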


In other words, the above theorem gives a new construction of the virtual fundamental class in the Borel--Moore homology under a mild assumption that is satisfied when $Y$ is quasi-projective.

\subsection{Proof of Theorem \ref{thm:main}}\label{ssec:main}

Take a global resolution $\bL_{\bs{Y}} |_Y \cong E^{\bullet} = [E^{-1} \to E^0]$ and let
$\iota_{E^{\bullet}} \colon t_0(\bfT^*[-1]\bs{Y}) \hookrightarrow E^{-1}$ be the natural inclusion.
The following lemma is the key ingredient in the proof of Theorem \ref{thm:main}.
\begin{lem}\label{lem:keying}
The support of the complex $\FS_{E^{-1}}(\iota_{E^{\bullet}, *} \varphi_{\bfT^*[-1] \bs{Y}})$ is contained in the cone $C_{E^{\bullet}}$.
\end{lem}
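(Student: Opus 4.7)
The plan is to exploit that the support of a constructible complex is analytic-local, so the claim only needs to be checked around each point $p\in Y$. By \cite{BBJ19}, I would fix an open inclusion $\bs{\iota}\colon\bs{Z}(s)\hookrightarrow\bs{Y}$ with $s\in\Gamma(U,F)$ a section of a vector bundle on a smooth scheme $U$, with $p$ in the image of $Z:=Z(s)$, and shrink so that $F$, $\Omega_U$, $E^{-1}|_Z$ and $E^0|_Z$ are all trivial. Over $Z$, both $E^\bullet|_Z$ and the local-model complex $[F^\vee|_Z\xrightarrow{ds^\vee}\Omega_U|_Z]$ are two-term locally free resolutions of $\bL_{\bs Y}|_Z$. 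After enlarging both by acyclic trivial summands as needed, a standard lifting will produce a chain map of complexes of vector bundles whose degree-$(-1)$ component $\psi\colon F^\vee|_Z\hookrightarrow E^{-1}|_Z$ is an injective bundle morphism through which the critical-locus embedding $\Crit(\bar s)=t_0(\bfT^*[-1]\bs Z(s))\hookrightarrow E^{-1}|_Z$ factors. The transpose ${}^t\psi\colon E_1|_Z\twoheadrightarrow F|_Z$, together with the corresponding degree-$0$ map, would then implement the Behrend--Fantechi identification $C_{E^\bullet}|_Z=({}^t\psi)^{-1}(C_{Z/U})$ --- essentially the assertion that the intrinsic normal cone $\fC_Y$ is independent of the chosen resolution.

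Since $\psi$ is a closed immersion, $\psi_!=\psi_*$. Combining \eqref{eq:isomoritriv} with the Fourier--Sato base change formula \eqref{eq:FSbcc1} for $\psi$ would give, up to a cohomological shift,
\[
\FS_{E^{-1}|_Z}\bigl(\iota_{E^\bullet,*}\varphi_{\bfT^*[-1]\bs Y}|_{E^{-1}|_Z}\bigr)\;\cong\;({}^t\psi)^*\,\FS_{F^\vee|_Z}\bigl(\varphi_{\bar s}(\bQ_{F^\vee})|_{F^\vee|_Z}\bigr).
\]
Theorem \ref{thm:vansp} applied with $\cF=\bQ_U$ identifies the inner Fourier--Sato transform with $\iota_{C_{Z/U},*}\Sp_{Z/U}(\bQ_U)$, whose support is $C_{Z/U}\subset F|_Z$. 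Pulling back by ${}^t\psi$ would place the support of the left-hand side inside $({}^t\psi)^{-1}(C_{Z/U})=C_{E^\bullet}|_Z$, yielding the claim near $p$ and hence globally.

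The main difficulty will be the bundle comparison step of the first paragraph: constructing $\psi$ with injectivity in degree $-1$ and with a compatible degree-$0$ map, and verifying the identity $({}^t\psi)^{-1}(C_{Z/U})=C_{E^\bullet}|_Z$. The injectivity will require the acyclic-enlargement trick, while the normal-cone identity follows from the standard Behrend--Fantechi argument that $\fC_Y$ is resolution-independent; though essentially standard, both will have to be threaded carefully so that the Fourier--Sato base change formula \eqref{eq:FSbcc1} applies in the stated form.
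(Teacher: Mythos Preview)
Your proposal is correct and follows essentially the same route as the paper: reduce locally to a derived zero locus $\bs{Z}(s)$, compare the given resolution $E^\bullet$ with the canonical one $[F^\vee|_Z\to\Omega_U|_Z]$ via a bundle injection, use \eqref{eq:FSbcc1} to transport the Fourier--Sato transform across the comparison, and invoke Theorem~\ref{thm:vansp} together with \eqref{eq:isomoritriv}. The only organizational difference is that the paper first isolates the resolution-independence of the statement as a separate step (via \cite[Proposition~5.3]{BF97} and \eqref{eq:FSbcc1}) and then works locally with the canonical resolution alone, whereas you weave the two together; your ``acyclic-enlargement trick'' is exactly the content of that reduction.
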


\begin{proof}
  Take another global resolution $\bL_{\bs{Y}}|_Y \cong F^{\bullet}$ by a two-term complex of locally free sheaves.
  We use the same notation as above for $F^{\bullet}$.
  We claim that the statements for $E^{\bullet}$ and $F^{\bullet}$ are equivalent.
  Arguing as the proof of \cite[Proposition 5.3]{BF97}, we may assume that there exists a strict monomorphism of complexes $F^{\bullet} \hookrightarrow E^{\bullet}$.
  Let $q \colon E_{1} \to F_1$ be the dual morphism to the map $F^{-1} \hookrightarrow E^{-1}$ which is smooth surjective by assumption.
  Then we have an equality $C_{E^{\bullet}} = q^{-1}(C_{F^{\bullet}})$. Thus the desired equivalence follows from the isomorphism \eqref{eq:FSbcc1}.

Now we return back to the proof of the lemma.
Since the statement can be checked locally, we may assume $\bs{Y} = \bs{Z}(s)$ for some section $s \in \Gamma(U, E)$ of a vector bundle $E$ on a smooth scheme $U$.
Further, we may take the canonical resolution $[E^\vee |_{t_0(\bs{Z}(s))} \to \Omega_U|_{t_0(\bs{Z}(s))}] \simeq \bL_{\bs{Z}(s)} |_{t_0(\bs{Z}(s))}$ as the global resolution $E^{\bullet}$. 
Then the claim follows from the isomorphism \eqref{eq:isomoritriv} and Theorem \ref{thm:vansp}.
\end{proof}

Let $0_{C_{E^{\bullet}}} \colon Y \hookrightarrow C_{E^{\bullet}}$ and $0_{E_1} \colon Y \hookrightarrow E_1$ be the zero sections and $i_{{C_{E^{\bullet}}}} \colon C_{E^{\bullet}} \hookrightarrow E_1$ be the natural inclusion.
Consider the following diagram in $D^b_c(Y)$
\begin{equation}\label{eq:coneclassconst}
\xymatrix{
0_{C_{E^{\bullet}}}^*(\FS_{E^{-1}}(\iota_{E^{\bullet}, *} \varphi_{\bfT^*[-1] \bs{Y}})) \otimes 0_{C_{E^{\bullet}}}^! \bQ_{C_{E^{\bullet}}} \ar[r]^-{\eqref{eq:purity}}
& 0_{C_{E^{\bullet}}}^!(\FS_{E^{-1}}(\iota_{E^{\bullet}, *} \varphi_{\bfT^*[-1] \bs{Y}}))  \\
\pi_!\varphi_{\bfT^*[-1] \bs{Y}} \otimes 0_{C_{E^{\bullet}}}^! \bQ_{C_{E^{\bullet}}} \ar[u]_-{\eqref{eq:FSbcc1}}^-{\simd} \ar[d]^-{\gamma \otimes \id}_-{\simd}
& \pi_*\varphi_{\bfT^*[-1] \bs{Y}}[-2 \rank E_1] \ar[u]_-{\eqref{eq:FSbcc2}}^-{\simd} \ar[d]^-{\bar{\gamma} }_-{\simd} \\
0_{C_{E^{\bullet}}}^! \bQ_{C_{E^{\bullet}}}[\vdim \bs{Y}] \ar@{-->}[r]^-{\alpha}
& \omega_{Y}[-\vdim \bs{Y} - 2 \rank E_1]
}
\end{equation}
where the bottom horizontal arrow $\alpha$ is defined so that the diagram becomes commutative.
Let $\pi_{C_{E^{\bullet}}} \colon C_{E^{\bullet}} \to Y$ be the natural projection.
We have a natural isomorphism ${\pi_{C_{E^{\bullet}}}}_! \omega_{C_{E^{\bullet}}} \cong 0_{C_{E^{\bullet}}}^! \omega_{C_{E^{\bullet}}} $ constructed in \eqref{eq:homotopyinv2}.
Therefore the map $\alpha$ induces a map
\[
\bQ_{C_{E^{\bullet}}}[\vdim \bs{Y}] \to \omega_{C_{E^{\bullet}}}[-\vdim \bs{Y} - 2 \rank E_1]
\]
or equivalently an element in $ \HBM_{2 \rank E_0}(C_{E^{\bullet}}) \cong \HBM_0(\fC_Y) $ which is denoted by $[\fC_Y]^{E^{\bullet}}_{ \varphi}$.

\begin{lem}
  The class $[\fC_Y]^{E^{\bullet}}_{ \varphi}$ is independent of the choice of the resolution $E^{\bullet}$.
\end{lem}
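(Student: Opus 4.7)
The plan is to follow the standard reduction from \cite[Proposition 5.3]{BF97}: given two resolutions $E^{\bullet}$ and $F^{\bullet}$ of $\bL_{\bs{Y}}|_Y$, I may assume that there is a strict monomorphism $F^{\bullet} \hookrightarrow E^{\bullet}$. This produces a closed embedding of vector bundles $\iota \colon F^{-1} \hookrightarrow E^{-1}$ over $Y$ whose dual $q \colon E_1 \to F_1$ is smooth surjective of relative dimension $d \coloneqq \rank E_1 - \rank F_1$, and satisfies $C_{E^{\bullet}} = q^{-1}(C_{F^{\bullet}})$. Writing $q_C \colon C_{E^{\bullet}} \to C_{F^{\bullet}}$ for the restriction, the identifications $\HBM_{2\rank E_0}(C_{E^{\bullet}}) \cong \HBM_0(\fC_Y) \cong \HBM_{2\rank F_0}(C_{F^{\bullet}})$ are realized by smooth pullback along $q_C$, so it suffices to establish the equality $q_C^{*}[\fC_Y]^{F^{\bullet}}_{\varphi} = [\fC_Y]^{E^{\bullet}}_{\varphi}$.

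Since $\iota_{E^{\bullet}} = \iota \circ \iota_{F^{\bullet}}$ and $\iota$ is a closed immersion, the Fourier--Sato base change \eqref{eq:FSbcc1} together with its associativity \eqref{eq:FSbcc1ass} supplies a canonical isomorphism
\[
\FS_{E^{-1}}(\iota_{E^{\bullet},*}\varphi_{\bfT^*[-1]\bs{Y}}) \cong q^{*}\FS_{F^{-1}}(\iota_{F^{\bullet},*}\varphi_{\bfT^*[-1]\bs{Y}}).
\]
Applying $0_{C_{E^{\bullet}}}^{*}$ and using $q \circ 0_{E_1} = 0_{F_1}$, I obtain an identification of the left-hand column of \eqref{eq:coneclassconst} for $E^{\bullet}$ with that for $F^{\bullet}$; applying $0_{C_{E^{\bullet}}}^{!}$ and invoking the smoothness isomorphism $q^{!} \cong q^{*}[2d]$ coming from the complex orientation, I obtain an analogous identification of the right-hand columns, shifted by $[-2d]$. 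The same shift appears through $0_{C_{E^{\bullet}}}^{!}\bQ_{C_{E^{\bullet}}} \cong 0_{C_{F^{\bullet}}}^{!}\bQ_{C_{F^{\bullet}}}[-2d]$ arising from the smoothness of $q_C$. I then plan to check that under these identifications the whole diagram \eqref{eq:coneclassconst} for $E^{\bullet}$ is obtained from that for $F^{\bullet}$ by applying $q_C^{!}$, which forces $\alpha_{E^{\bullet}}$ to correspond to $q_C^{!}\alpha_{F^{\bullet}}$. Translated to Borel--Moore homology this is precisely the desired equality $q_C^{*}[\fC_Y]^{F^{\bullet}}_{\varphi} = [\fC_Y]^{E^{\bullet}}_{\varphi}$.

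The hard part is essentially bookkeeping. The morphisms $\gamma$, $\bar{\gamma}$, and the purity natural transformation \eqref{eq:purity} sitting on top of \eqref{eq:coneclassconst} are individually compatible with smooth pullback along $q_C$; however, weaving these compatibilities together with the Fourier--Sato associativity \eqref{eq:FSbcc1ass}, \eqref{eq:FSbcc2ass} and with the interaction of Fourier--Sato base change with ordinary base change \eqref{eq:FSbccFSbc} into a single large commutative diagram is the main technical burden. It amounts to a routine, if lengthy, verification using the formalism developed in Section \ref{sec:app}, after which the lemma follows immediately.
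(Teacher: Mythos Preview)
Your approach is essentially the paper's: reduce to a strict monomorphism $F^\bullet \hookrightarrow E^\bullet$, then build a large diagram with inner rectangle \eqref{eq:coneclassconst} for $F^\bullet$ and outer rectangle that for $E^\bullet$, and verify the connecting trapezoids commute using the Fourier--Sato associativity \eqref{eq:FSbcc1ass} and \eqref{eq:FSbcc2ass}. One correction to your bookkeeping: the compatibility of the top horizontal arrow (the purity map \eqref{eq:purity}) is not governed by \eqref{eq:FSbccFSbc}---that diagram concerns change of \emph{base} $Z' \to Z$, whereas here both bundles live over the same $Y$---but by Proposition~\ref{prop:FSpure}, which asserts that under the identifications \eqref{eq:FSbcc1} and \eqref{eq:FSbcc2} the map $\iota_! \to \iota_*$ (an equality, since $\iota$ is closed) corresponds to the purity map $q^* \to q^![-2d]$. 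With that substitution your outline is exactly the paper's proof.
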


\begin{proof}
Arguing as the proof of Lemma \ref{lem:keying}, we need to prove the equality 
$[\fC_Y]^{E^{\bullet}}_{ \varphi} = [\fC_Y]^{F^{\bullet}}_{ \varphi}$
for resolutions $E^{\bullet}$ and $F^{\bullet}$ of the complex $\bL_{\bs{Z}(s)} |_{Z(s)}$ such that there exists a strict monomorphism $F^{\bullet} \hookrightarrow E^{\bullet}$.
We keep the notation from the proof of Lemma \ref{lem:keying} and
  let $q_C \colon C_{E^{\bullet}} \to C_{F^{\bullet}}$
  be the restriction of $q$.
   Consider the following diagram in $D^b_c(Y)$:
  \[
  \begin{tikzcd}[scale cd=0.7]
  0_{C_{E^{\bullet}}}^*(\FS_{E^{-1}}(\iota_{E^{\bullet}, *} \varphi_{\bfT^*[-1] \bs{Y}})) \otimes 0_{C_{E^{\bullet}}}^! \bQ_{C_{E^{\bullet}}} \arrow[rrr] \arrow[ddd, shift right =10ex, "\simd"] \arrow[rd, "\sim", "\text{(A)}"']
  &[-130pt]
  &[-10pt]
  &[-85pt] 0_{C_{E^{\bullet}}}^!(\FS_{E^{-1}}(\iota_{E^{\bullet}, *} \varphi_{\bfT^*[-1] \bs{Y}})) \arrow[ld, "\sim"',"\text{(B)}"] \arrow[ddd, shift left =5ex, "\simd"] \\
  &0_{C_{F^{\bullet}}}^*(\FS_{F^{-1}}({\iota_{F^{\bullet}, *}} \varphi_{\bfT^*[-1] \bs{Y}})) \otimes 0_{C_{F^{\bullet}}}^! \bQ_{C_{F^{\bullet}}}[-2 \dim q] \arrow[r] \arrow[d, "\simd"]
  & 0_{C_{F^{\bullet}}}^!(\FS_{F^{-1}}({\iota_{F^{\bullet}, *}} \varphi_{\bfT^*[-1] \bs{Y}}))[-2 \dim q] \arrow[d, "\simd"]
  & \\
  & 0_{C_{F^{\bullet}}}^! \bQ_{C_{F^{\bullet}}}[\vdim \bs{Y} - 2 \dim q] \arrow[ld,"\sim"',"\text{(C)}", start anchor={[xshift=0pt]},
  end anchor={[xshift=-30pt]}] \arrow[r, dashed]
  & \omega_{Y}[-\vdim \bs{Y} - 2 \rank E_1] \arrow[rd, equal]
  & \\
0_{C_{E^{\bullet}}}^! \bQ_{C_{E^{\bullet}}}[\vdim \bs{Y}] \quad \quad \quad \quad \quad \quad \quad \quad \ar[rrr,dashed, start anchor={[xshift=-55pt]},
end anchor={[xshift=0pt]}]
  &
  &
  & \omega_{Y}[-\vdim \bs{Y} - 2 \rank E_1].
\end{tikzcd}
  \]
  where the inner and outer rectangles are the diagram \eqref{eq:coneclassconst} for $F^{\bullet}$ and $E^{\bullet}$ respectively, the map (A) is defined using the isomorphism \eqref{eq:FSbcc1} and the natural isomorphism $q_C^! \bQ_{C_{F^{\bullet}}}[-2 \dim q] \cong \bQ_{C_{E^{\bullet}}}$,
  the map (B) is defined using the isomorphism \eqref{eq:FSbcc2}, and the map (C) is defined by the natural isomorphism  $q_C^! \bQ_{C_{F^{\bullet}}}[-2 \dim q] \cong \bQ_{C_{E^{\bullet}}}$ again.
  The commutativity of the left (resp. right) trapezoid follows from the commutative diagram \eqref{eq:FSbcc1ass} (resp. the diagram \eqref{eq:FSbcc2ass}).  The commutativity of the upper trapezoid follows from Proposition \ref{prop:FSpure}.
  Therefore we obtain the commutativity of the lower trapezoid, which implies the lemma.
  %Then the claim follows immediately from Proposition \ref{prop:FSpure}.
\end{proof}

\begin{prop}\label{prop:coneclass}
  The class $[\fC_Y]^{E^{\bullet}}_{ \varphi}$ is equal to $(-1)^{\vdim \bs{Y} \cdot (\vdim \bs{Y} -1)/2}[\fC_Y]$ where $[\fC_Y]$ is the fundamental class of $\fC_Y$.
\end{prop}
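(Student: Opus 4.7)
The strategy is to reduce to the local model, convert the Fourier--Sato side of diagram \eqref{eq:coneclassconst} into specialization data via Theorem \ref{thm:vansp} and Proposition \ref{prop:dimredcomp}, and then invoke Theorem \ref{thm:spcomm} to identify this with Fulton's specialization.

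\emph{Step 1 (Reduction to local).} The class $[\fC_Y]^{E^\bullet}_\varphi$ is obtained by adjunction from the morphism $\alpha$ of constructible complexes in \eqref{eq:coneclassconst}, so equality of two such classes can be tested on a Zariski cover of $Y$. The cycle class $[\fC_Y]$ is likewise local. By the preceding lemma I may then assume $\bs{Y} = \bs{Z}(s)$ for a section $s$ of a vector bundle $E \to U$ on a smooth scheme, and choose the canonical resolution $E^\bullet = [E^\vee|_Y \to \Omega_U|_Y]$. In this model $C_{E^\bullet} = C_{Z/U} \subset E|_Y$ and $[\fC_Y] = [C_{Z/U}]$.

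\emph{Step 2 (Fourier--Sato $=$ specialization).} Using the local identification \eqref{eq:isomoritriv} together with $\pi_{E^\vee}^!\bQ_U \cong \bQ_{E^\vee}[2\rank E]$ gives $\varphi_{\bfT^*[-1]\bs{Y}} \cong \varphi_{\bar{s}}(\pi_{E^\vee}^!\bQ_U)[\vdim \bs{Y}]$, so Theorem \ref{thm:vansp} applied to $\cF=\bQ_U$ yields
\[
\FS_{E^{-1}}(\iota_{E^\bullet,*}\varphi_{\bfT^*[-1]\bs{Y}}) \cong \iota_{C_{Z/U},*}\Sp_{Z/U}(\bQ_U)[\vdim \bs{Y}].
\]

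\emph{Step 3 (Translation of the diagram).} Apply Proposition \ref{prop:dimredcomp}(i,ii) combined with the compatibility diagrams \eqref{eq:dimredlocmod1}, \eqref{eq:dimredlocmod2} of Theorem \ref{thm:dimred} to rewrite the vertical columns of \eqref{eq:coneclassconst} using Proposition \ref{prop:spres}'s isomorphisms $\Sp_{Z/U}(\bQ_U)|_Z \cong \bQ_Z$ and $\Sp_{Z/U}(\bQ_U)|^!_Z \cong \omega_{Z/U}[\vdim \bs{Y}]$. Proposition \ref{prop:FSpure} converts the purity map across the top row into the purity map $(-)\otimes \omega_{Z/C_{Z/U}} \to (-)^!$ applied to $\Sp_{Z/U}(\bQ_U)$ at $Z$, i.e.\ into the canonical map \eqref{eq:spmapp} defining the class $e_{Z/U} \in \mH^0(C_{Z/U} \to U)$ of Section \ref{ssec:spmap}. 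Under these identifications the bottom arrow $\alpha$ of \eqref{eq:coneclassconst} corresponds, up to a sign, to the map whose adjoint is $e_{Z/U}$, so the adjoint of $\alpha$ represents $\pm\, \Sp^{\BM}_{Z/U}([U]_{\BM})$.

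\emph{Step 4 (Conclusion).} By Theorem \ref{thm:spcomm},
\[
\Sp^{\BM}_{Z/U}([U]_{\BM}) = \cl_{C_{Z/U}}\!\bigl(\Sp^{\Chow}_{Z/U}([U])\bigr) = \cl_{C_{Z/U}}([C_{Z/U}]) = \cl_{C_{E^\bullet}}([\fC_Y]),
\]
the middle equality being Fulton's definition of $\Sp^{\Chow}$. Combining with Step 3 proves the equality of Proposition \ref{prop:coneclass} up to a global sign.

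\emph{The main obstacle} is the sign in Step 3. It is the product of (i) the sign $(-1)^{\rank E(\rank E -1)/2}$ appearing in \eqref{eq:dimredlocmod2} for $\bar{\gamma}$, (ii) the signs $-1$ and $(-1)^{\rank E+1}$ from Proposition \ref{prop:dimredcomp}(i) and (ii), and (iii) the Koszul sign that enters when Proposition \ref{prop:FSpure} is used to compare the two purity maps. Using $\dim U = \vdim \bs{Y} + \rank E$, these contributions must be shown to collapse to $(-1)^{\vdim \bs{Y}(\vdim \bs{Y}-1)/2}$; this is the delicate bookkeeping step and where the bulk of the work in the proof will lie.
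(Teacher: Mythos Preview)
Your approach is essentially identical to the paper's: localize, take the canonical resolution, apply Theorem \ref{thm:vansp} to identify $\FS_{E^{-1}}(\iota_{E^\bullet,*}\varphi_{\bfT^*[-1]\bs{Y}})$ with $\iota_{C_{Z/U},*}\Sp_{Z/U}(\bQ_U)[\vdim\bs{Y}]$, translate the columns of \eqref{eq:coneclassconst} via Proposition \ref{prop:dimredcomp} and Theorem \ref{thm:dimred}, and finish with Theorem \ref{thm:spcomm}. Two small corrections to your sign analysis: the top purity arrow of \eqref{eq:coneclassconst} needs no Proposition \ref{prop:FSpure} --- it transports directly to the purity map for $\Sp_{Z/U}(\bQ_U)$ under the isomorphism of Theorem \ref{thm:vansp} --- and the sign from \eqref{eq:dimredlocmod2} is $(-1)^{\dim E(\dim E-1)/2}$ with $\dim E = \dim U + \rank E$, not $(-1)^{\rank E(\rank E-1)/2}$; with this correction the three signs $(-1)$, $(-1)^{\rank E+1}$, $(-1)^{\dim E(\dim E-1)/2}$ combine to the asserted $(-1)^{\vdim\bs{Y}(\vdim\bs{Y}-1)/2}$.
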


\begin{proof}
  Since the statement can be checked locally on $\bs{Y}$, we may assume $\bs{Y} = \bs{Z}(s)$ where $s$ is a section of a vector bundle $E$ on a smooth scheme $U$. We write $Z(s) \coloneqq t_0(\bs{Z}(s))$.
  Using the previous lemma, we may take the canonical resolution $[E^\vee |_{Z(s)} \to \Omega_U|_{Z(s)}] \simeq \bL_{\bs{Z}(s)} |_{Z(s)}$ as the global resolution $E^{\bullet}$.
  In this case, the cone $C_{E^\bullet}$ is identified with the normal cone $C_{Z(s) / U} \subset E$ and the embedding
  $\iota_{E^{\bullet}} \colon t_0(\bfT^*[-1]\bs{Z}(s)) \hookrightarrow E^\vee$ is identified with the natural inclusion $\Crit(\bar{s}) \hookrightarrow E^\vee$.
  Therefore the isomorphism \eqref{eq:isomoritriv} and Theorem \ref{thm:vansp} implies an isomorphism
  \[
  \FS_{E^\vee}{ (\iota_{E^{\bullet}, *}} \varphi_{\bfT^*[-1] \bs{Y}})) \cong \Sp_{Z(s) / U}(\bQ_U[\vdim \bs{Z}(s)]).
  \]
  Further, Proposition \ref{prop:dimredcomp} and the commutativity of the diagrams \eqref{eq:dimredlocmod1} and \eqref{eq:dimredlocmod2} imply that the outer square of the diagram \eqref{eq:coneclassconst} is identified with the following diagram up to the shift by $\vdim \bs{Z}(s)$:
  \begin{equation}
   \def\objectstyle{\scriptstyle}
  \def\labelstyle{\scriptscriptstyle}
    \xymatrix@C=70pt{
    0_{C_{Z(s)/U}}^*(\Sp_{Z(s) / U}(\bQ_U)) \otimes 0_{C_{Z(s)/U}}^! \bQ_{C_{Z(s)/U}} \ar[r]^(.57){(-1)^{\dim E \cdot (\dim E -1)/2}\eqref{eq:purity}}   \ar[d]^-{(-1) \cdot\eqref{eq:spres*}} _-{\simd}
    & 0_{C_{Z(s)/U}}^! (\Sp_{Z(s) / U}(\bQ_U)) \ar[d]^-{(-1)^{\rank E + 1} \cdot \eqref{eq:spres!}} _-{\simd} \\
     0_{C_{Z(s)/U}}^! \bQ_{C_{Z(s)/U}} \ar@{-->}[r]^-{\alpha[- \vdim \bs{Z}(s)]}
    & \omega_{Z(s)}[-2 \rank E]
    }
  \end{equation}
  where $0_{C_{Z(s)/U}} \colon Z(s) \hookrightarrow C_{Z(s)/U}$ is the zero section.
  Therefore Theorem \ref{thm:spcomm} and the construction of the map $\Sp_{Z(s)/U}^{\BM}$ in \S \ref{ssec:spmap} implies an equality
  \begin{align*}
  (C_{Z(s)/U} \to \fC_Y)^! [\fC_Y]^{E^{\bullet}}_{ \varphi} &= (-1)^{\vdim \bs{Y} \cdot (\vdim \bs{Y} -1)/2}\Sp_{Z(s)/U}^{\Chow}([U]) \\
   &= (-1)^{\vdim \bs{Y} \cdot (\vdim \bs{Y} -1)/2} [C_{Z(s)/U}]
\end{align*}
  which implies the proposition.
\end{proof}

\begin{proof}[Proof of Theorem \ref{thm:main}]
Consider the following diagram:
\[
  \def\objectstyle{\scriptstyle}
\def\labelstyle{\scriptscriptstyle}
\xymatrix@C=30pt{
0_{C_{E^{\bullet}}}^*(\FS_{E^{-1}}(\iota_{E^{\bullet}, *} \varphi_{\bfT^*[-1] \bs{Y}})) \otimes 0_{C_{E^{\bullet}}}^! \bQ_{C_{E^{\bullet}}} \ar[r]^-{\eqref{eq:purity}} \ar@/_110pt/[dddd]
& 0_{C_{E^{\bullet}}}^! \FS_{E^{-1}}(\iota_{E^{\bullet}, *} \varphi_{\bfT^*[-1] \bs{Y}})\ar@/^90pt/[dddd]  \\
0_{E_1}^*(\FS_{E^{-1}}(\iota_{E^{\bullet}, *} \varphi_{\bfT^*[-1] \bs{Y}})) \otimes 0_{E_1}^! \bQ_{E_1} \ar[r]^-{\eqref{eq:purity}} \ar[u]
& 0_{E_1}^! \FS_{E^{-1}}(\iota_{E^{\bullet}, *} \varphi_{\bfT^*[-1] \bs{Y}}) \ar[u]^-{\simd} \\
\pi_!\varphi_{\bfT^*[-1] \bs{Y}} \otimes 0_{E_1}^! \bQ_{E_1} \ar[u]_-{\eqref{eq:FSbcc1}}^-{\simd} \ar[d]^-{\gamma \otimes \id}_-{\simd}
& \pi_*\varphi_{\bfT^*[-1] \bs{Y}}[-2 \rank E_1] \ar[u]_-{\eqref{eq:FSbcc2}}^-{\simd} \ar[d]^-{\bar{\gamma}}_-{\simd} \\
0_{E_1}^! \bQ_{E_1}[\vdim \bs{Y}] \ar[r]^-{e_{\varphi}(\bfT^*[-1]\bs{X})[-2 \rank E_1]} \ar[d]
& \omega_{Y}[-\vdim \bs{Y} - 2 \rank E_1] \ar@{=}[d] \\
 0_{C_{E^{\bullet}}}^! \bQ_{C_{E^{\bullet}}}[\vdim \bs{Y}] \ar[r]^-{\alpha}
& \omega_{Y}[-\vdim \bs{Y} - 2 \rank E_1]
}
\]
where the outer square is equal to the outer square of \eqref{eq:coneclassconst}.
The commutativity of the upper rectangle and the left and right subdiagrams is obvious, and the commutativity of the middle rectangle follows from Proposition \ref{prop:FSpure}. This implies the commutativity of the bottom rectangle hence we conclude that the theorem holds using Proposition \ref{prop:coneclass}.
\end{proof}

%============================================================

\section{DT4 virtual classes and categorifications}\label{sec:future}

In this section, we discuss some possible generalizations of the sheaf theoretic construction of the virtual fundamental class we have seen in Theorem \ref{thm:main}.
In \S \ref{ssec:DT4} we formulate a conjecture closely related to the dimensional reduction theorem \cite[Theorem 3.1]{Kin21} and propose a sheaf theoretic construction of the DT4 invariant assuming this conjecture. We will see that this construction recovers the square root Euler class up to sign in a special case, which gives evidence of this conjecture.
In \S \ref{ssec:categorify}, we will discuss a categorification of Theorem \ref{thm:main} and its relation with the categorical Donaldson--Thomas invariant introduced by Toda \cite{Tod19}.

\subsection{DT4 invariants}\label{ssec:DT4}

Let $\bs{Y}$ be a $-2$-shifted symplectic derived scheme over $\Spec \bC$ (e.g. the fine moduli space of coherent sheaves on a complex Calabi--Yau fourfold). We refer the reader to \cite{PTVV13} for the definition of the shifted symplectic structure.
The $-2$-shifted symplectic structure defines a symmetry of the cotangent complex $\bL_{\bs{Y}}^{\vee} \cong \bL_{\bs{Y}}[-2]$.
Further assume that $\bs{Y}$ is equipped with a choice of orientation.  
Based on a previous work by Cao--Leung \cite{CL14} and Borisov--Joyce \cite{BJ17}, Oh and Thomas in \cite{OT20} defined a deformation-invariant class
\[
[\bs{Y}]_{\vir}^{\DT4} \in A_{\vdim{\bs{Y}}/2}(Y)[\tfrac{1}{2}]
\]
called the DT4 virtual class under the assumption that $Y = t_0(\bs{Y})$ is quasi-projective.
This virtual class can be used to define Donaldson--Thomas type invariants for Calabi--Yau fourfolds.

We now propose a sheaf theoretic approach to the DT4 virtual class.
We define a quadratic function $q$ on the $-1$-shifted cotangent stack $\widetilde{Y} = t_0(\bfT^*[-1]\bs{Y})$ by the following composition
\[
q \colon \widetilde{Y} \to t_0(\Tot_{\bs{Y}}(\bL_{\bs{Y}}[-1] \oplus \bL_{\bs{Y}}[-1])) \to \bA^1
\]
where the first map is the diagonal embedding over $Y$ and the latter map is defined using the $-2$-shifted symplectic form on $\bs{Y}$.
Let $\varphi_{\bfT^*[-1]\bs{Y}}$ be the perverse sheaf introduced in \cite[Theorem 4.8]{BBBBJ15} associated with the canonical oriented $-1$-shifted symplectic structure on $\bfT^*[-1]\bs{Y}$.
We denote by $\pi \colon \widetilde{Y} \to Y$ the natural projection and $0_{Y} \colon Y \to \widetilde{Y}$ the zero section.

\begin{conj}\label{conj:dimred-2}
\hspace{2em}
  \begin{itemize}
    \item[(i)] There exist natural isomorphisms
    \[
\gamma \colon 0_{Y}^! \varphi_{\bfT^*[-1] \bs{Y}} \cong \bQ_Y[\vdim \bs{Y}], \ \bar{\gamma} \colon 0_Y^* \varphi_{\bfT^*[-1] \bs{Y}} \cong \omega_Y[- \vdim \bs{Y}].
    \]
    \item[(ii)] There exists a natural isomorphism
    \[
    \delta \colon 0_{Y}^! \varphi_q(\varphi_{\bfT^*[-1] \bs{Y}}) \cong \omega_{Y}
    \]
    which depends on the chosen orientation.
  \end{itemize}
\end{conj}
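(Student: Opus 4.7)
The plan is to reduce Conjecture \ref{conj:dimred-2} to a local computation by means of Darboux-type local models for $-2$-shifted symplectic derived schemes, in direct analogy with the strategy behind Theorem \ref{thm:dimred}. A key preliminary observation is that $\bfT^*[-1]\bs Y$ is automatically a quasi-smooth $-1$-shifted symplectic derived scheme (its $-1$-shifted symplectic form forces the cotangent complex into amplitude $[-1,0]$), so the perverse sheaf $\varphi_{\bfT^*[-1]\bs Y}$ falls within the framework of \cite{BBDJS15}.

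For part (i), I would first argue that $\varphi_{\bfT^*[-1]\bs Y}$ is $\bR^+$-monodromic with respect to the canonical scaling action on $\widetilde Y$ over $Y$ coming from scaling the cotangent direction. Granting this, the conic-sheaf formalism of \S\ref{ssec:FS} should identify $0_Y^! \varphi_{\bfT^*[-1]\bs Y}$ with $\pi_! \varphi_{\bfT^*[-1]\bs Y}$ and $0_Y^* \varphi_{\bfT^*[-1]\bs Y}$ with $\pi_* \varphi_{\bfT^*[-1]\bs Y}$. The remaining task is then to generalize $\gamma$ and $\bar \gamma$ of Theorem \ref{thm:dimred} from quasi-smooth to $-2$-shifted symplectic targets. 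I would proceed by taking a local Darboux model (available via a degree-shifted analog of \cite[Theorem 4.1]{BBJ19}, in the spirit of Bouaziz--Grojnowski and Brav--Dyckerhoff) in which $\bs Y$ is presented explicitly and $\bfT^*[-1]\bs Y$ becomes the derived critical locus of a regular function $\bar w$ on a smooth ambient scheme, so that the local versions of $\gamma$ and $\bar \gamma$ are built directly from the isomorphisms \eqref{eq:dimred!} and \eqref{eq:dimred*}; one then glues using the orientation.

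For part (ii), my approach relies on a Thom--Sebastiani type decomposition for vanishing cycles. In a local Darboux chart, the quadratic function $q$ coincides, up to a non-vanishing factor, with the quadratic part of $\bar w$ in the directions transverse to $Y$. Consequently $\varphi_q(\varphi_{\bfT^*[-1]\bs Y})$ locally splits as an external tensor product of $\varphi_{\bfT^*[-1]\bs Y}$ with the vanishing cycle sheaf of a non-degenerate quadratic form, the latter being a shift of the constant sheaf with a sign governed by the discriminant. Restricting to the zero section and combining with part (i) produces the local version of $\delta$; the orientation on $\bs Y$ supplies precisely the square root of the discriminant needed to trivialize the sign ambiguity.

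The hardest part will be the gluing step for part (ii), where the sign ambiguity in the local $\delta$'s is exactly what the orientation should cancel, and verifying this requires careful bookkeeping of orientation data through both Thom--Sebastiani and changes of Darboux chart, in the style of \cite{BBBBJ15}. A secondary, more foundational obstacle is that a Darboux theorem for $-2$-shifted symplectic derived schemes in the form convenient for this argument does not appear in the literature at the level of generality needed; some extension of the constructions in \cite{BBJ19, BBBBJ15} to symplectic degree $-2$ would have to be carried out as part of the proof.
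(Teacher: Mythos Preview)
This statement is explicitly a \emph{conjecture} in the paper; there is no proof to compare against. The paper only establishes the special case (Proposition~\ref{prop:DT4}) where $U = t_0(\bs{Y})$ is smooth and $\bL_{\bs{Y}}|_U \cong [T_U \xrightarrow{0} E \xrightarrow{0} \Omega_U]$, in which case $t_0(\bfT^*[-1]\bs{Y}) \cong [E/T_U]$ is a trivial gerbe over a vector bundle and the d-critical structure is trivial, so $\varphi_{\bfT^*[-1]\bs{Y}}$ is a shifted constant sheaf and both parts reduce to direct computation.

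Your proposal contains a genuine error at the outset. You assert that $\bfT^*[-1]\bs{Y}$ is automatically a quasi-smooth derived scheme because its $-1$-shifted symplectic form forces the cotangent complex into amplitude $[-1,0]$. This is false: when $\bs{Y}$ is $-2$-shifted symplectic, $\bL_{\bs{Y}}$ has amplitude $[-2,0]$, and $\bfT^*[-1]\bs{Y}$ is a derived Artin stack, not a quasi-smooth scheme. The paper makes this explicit in the remarks immediately following the conjecture: the zero section $0_Y$ is \emph{not} an embedding in general, which is precisely why there is no natural map $0_Y^! \varphi_{\bfT^*[-1]\bs{Y}} \to 0_Y^* \varphi_{\bfT^*[-1]\bs{Y}}$, and why part~(i) does not follow from Theorem~\ref{thm:dimred}. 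This is also why the paper invokes \cite[Theorem~4.8]{BBBBJ15} (the Artin-stack version) rather than \cite{BBDJS15} for the construction of $\varphi_{\bfT^*[-1]\bs{Y}}$.

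This error propagates: your appeal to the conic-sheaf identifications \eqref{eq:homotopyinv1}--\eqref{eq:homotopyinv2} to reduce $0_Y^!$ and $0_Y^*$ to $\pi_!$ and $\pi_*$ requires $\widetilde{Y} \to Y$ to behave like a vector bundle with $0_Y$ a closed immersion, which fails here. Likewise, in a local Darboux chart for a $-2$-shifted symplectic scheme (these do exist in \cite{BBJ19} for all $k<0$), $\bfT^*[-1]\bs{Y}$ does not become the derived critical locus of a function on a smooth scheme; the ambient is itself stacky. Your Thom--Sebastiani idea for part~(ii) is reasonable in spirit and is indeed what underlies the paper's verification in the special case, but making it work in general would require first confronting the stacky nature of $\widetilde{Y}$, which your outline sidesteps.
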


\begin{rem}
%  \begin{enumerate}
   Note that the first statement does not follow from Theorem \ref{thm:dimred} since $\bs{Y}$ is not quasi-smooth now.
    We believe that this statement can be generalized to an arbitrary derived Artin $1$-stack whose cotangent complex is perfect of amplitude $[-2, 1]$ (or an arbitrary locally finitely presented derived Artin stack once the work \cite{BBBBJ15} has generalized to higher Artin stacks).
%  \end{enumerate}
\end{rem}

\begin{rem}
    We do not have a natural map $0_Y^! \varphi_{\bfT^*[-1] \bs{Y}} \to 0_Y^*  \varphi_{\bfT^*[-1] \bs{Y}}$ in general since $0_Y$ is not an embedding when $\bs{Y}$ is not quasi-smooth.
\end{rem}

Roughly speaking, the DT4 virtual class can be regarded as a ``square root’’ of the virtual fundamental class though we do not know whether we can define virtual fundamental classes for non-quasi-smooth derived schemes.
Therefore we expect that there exists a some Verdier-self-dual complex that sits between $\bQ_Y[\vdim \bs{Y}]$ and $\omega_Y[-\vdim \bs{Y}]$ which recovers DT4 virtual class. Our proposal is that the complex is $\varphi_q(\varphi_{\bfT^*[-1] \bs{Y}})$ though it lives in $\widetilde{Y}$ rather than $Y$.

  Now assume that Conjecture \ref{conj:dimred-2} is true. Consider the following composition
  \begin{align*}
    \bQ_Y[\vdim \bs{Y}]
    \xrightarrow[\cong]{\gamma^{-1}} 0_{Y}^! \varphi_{\bfT^*[-1] \bs{Y}}
    \to 0_Y ^! \varphi_q (\varphi_{\bfT^*[-1] \bs{Y}})
    \xrightarrow[\cong]{\delta} \omega_Y
  \end{align*}
  where the second map is constructed using the natural transform
  $(q^{-1}(0) \hookrightarrow \widetilde{Y})^! \to \varphi_q$.
  Let
  $\sqrt{e_{\varphi}(\bfT^*[-1]\bs{Y})} \in \HBM_{\vdim{\bs{Y}}}(Y)$ be the element corresponding to the above composition.

  \begin{conj}\label{conj:DT4}
  Assume that $Y$ is quasi-projective. Then there exists a universal sign $\epsilon_{\vdim{\bs{Y}}} \in \{-1, 1\}$ that only depends on $\vdim{\bs{Y}}$ such that we have an equality
  \[
  \epsilon_{\vdim{\bs{Y}}} \cdot \sqrt{e_{\varphi}(\bfT^*[-1]\bs{Y})} = \cl_Y([\bs{Y}]_{\vir}^{\DT4})
  \]
  where $\cl_Y \colon A_{\vdim{\bs{Y}}/2}(Y)[\tfrac{1}{2}] \to \HBM_{\vdim \bs{Y}}(Y)$ is the cycle class map (recall that the Borel--Moore homology is taken in the rational coefficient in this paper).
\end{conj}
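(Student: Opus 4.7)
The plan is to parallel the proof of Theorem~\ref{thm:main}, replacing the derived-critical-locus local model for quasi-smooth $\bs{Y}$ with the Borisov--Joyce--Oh--Thomas local model for $-2$-shifted symplectic derived schemes: Zariski-locally $\bs{Y} \simeq \bs{Z}(s)$ where $s \in \Gamma(U, F)$ is an isotropic section of a quadratic bundle $(F, Q)$ on a smooth scheme $U$. The strategy splits in two: first prove Conjecture~\ref{conj:dimred-2} (which is the genuine sheaf-theoretic input), and then derive Conjecture~\ref{conj:DT4} from it by a Fourier--Sato comparison with the Oh--Thomas construction, in complete analogy with Section~\ref{ssec:main}.

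For the Fourier--Sato step I would choose a global resolution $E^\bullet = [E^{-2} \to E^{-1} \to E^0]$ of $\bL_{\bs{Y}}|_Y$ respecting the self-duality $\bL_{\bs{Y}}^\vee \simeq \bL_{\bs{Y}}[-2]$ coming from the $-2$-shifted symplectic form, so that $E^{-1}$ carries an induced non-degenerate quadratic form and $E_1 = (E^{-1})^\vee$ inherits the dual form. The analogue of Lemma~\ref{lem:keying} should show that $\FS_{E^{-1}}(\iota_{E^\bullet,*}\varphi_{\bfT^*[-1]\bs{Y}})$ is supported on an \emph{isotropic} cone $C_{E^\bullet}^{\mathrm{iso}} \subset E_1$---isotropy being the Fourier--Sato manifestation of the function $q$ vanishing on the support of the vanishing cycle---and this isotropic cone is precisely the one entering Oh--Thomas's definition of $[\bs{Y}]_{\vir}^{\DT4}$. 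The class $\sqrt{e_{\varphi}(\bfT^*[-1]\bs{Y})}$ should then arise by exactly the procedure of diagram~\eqref{eq:coneclassconst}, with the quadratic vanishing cycle $\varphi_q$ implementing, via the isomorphism $\delta$, the square-root operation that Oh--Thomas perform by choosing a maximal isotropic subbundle $\Lambda \subset E_1$ containing $C_{E^\bullet}^{\mathrm{iso}}$. Once both constructions are realized as cycle classes on $C_{E^\bullet}^{\mathrm{iso}}$, a diagram chase analogous to Proposition~\ref{prop:coneclass}---now with $\mathrm{Sp}$ replaced by a ``quadratic specialization'' furnished by $\delta$---should identify them up to the sought universal sign.

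The main obstacle is of course Conjecture~\ref{conj:dimred-2} itself, particularly the orientation-dependent isomorphism $\delta$. In the local model I would attempt to construct $\delta$ by an iterated vanishing cycle argument: starting from $\varphi_{\bfT^*[-1]\bs{Y}}$ computed via \eqref{eq:isomoritriv} as $\varphi_{\bar{s}}(\bQ[\cdot])$ on the total space of $F^\vee \oplus \Omega_U^\vee \oplus F$, one observes that the quadratic function $q$ there becomes the natural pairing $F^\vee \otimes F \to \bC$ plus a quadratic contribution from $Q^\vee$, and then invokes Thom--Sebastiani together with the non-degenerate-quadratic vanishing cycle computation to reduce $0_Y^! \varphi_q \varphi_{\bar{s}}(\bQ)$ to $\omega_Y$ up to a sign. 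Gluing these local isomorphisms then requires checking compatibility with the stabilization moves in the BBBBJ framework, which is the most delicate part; this is essentially the $-2$-shifted analog of the gluing statement \cite[Theorem 6.9]{BBDJS15} and seems to require either a genuine microlocalization functor (as alluded to in the introduction) or a direct local analysis of the d-critical-like structures on $-2$-shifted symplectic schemes. A secondary obstacle is pinning down $\epsilon_{\vdim \bs{Y}}$: this will require tracking signs from (i) the Fourier--Sato identities \eqref{eq:FSbcc1}--\eqref{eq:FSbcc4}, (ii) the Thom--Sebastiani contribution entering $\delta$, and (iii) the square-root convention for the Oh--Thomas class, and I would expect the answer to depend only on $\vdim \bs{Y}$ modulo $4$ in view of the periodicity of orthogonal vanishing cycle signs.
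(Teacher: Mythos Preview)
The statement you are attempting to prove is labelled \emph{Conjecture} in the paper, and the paper does not prove it. What the paper does establish is only Proposition~\ref{prop:DT4}, the very special case where the classical truncation $U=t_0(\bs{U})$ is smooth and $\bL_{\bs{U}}|_U \simeq [T_U \xrightarrow{0} E \xrightarrow{0} \Omega_U]$. In that case $t_0(\bfT^*[-1]\bs{U})\cong [E/T_U]$ is itself smooth, the d-critical structure is trivial, and $\varphi_{\bfT^*[-1]\bs{U}}$ is a shifted constant sheaf; Conjecture~\ref{conj:dimred-2} is then immediate, and the comparison with $[\bs{U}]_{\vir}^{\DT4}$ reduces to an explicit computation with Edidin--Graham's square-root Euler class after passing to a flag variety where a maximal isotropic subbundle exists. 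Even there the paper only obtains the equality up to an unspecified sign $\epsilon\in\{\pm1\}$ depending on the connected component, not the universal $\epsilon_{\vdim\bs{Y}}$ asserted in the conjecture.

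Your outline is a plausible programme for the general case, and you correctly identify the main obstruction: everything hinges on Conjecture~\ref{conj:dimred-2}, in particular on constructing and gluing the isomorphism $\delta$. But this is precisely what the paper leaves open. Your proposed local construction of $\delta$ via iterated vanishing cycles and Thom--Sebastiani is reasonable in spirit, but the gluing step---checking compatibility under the stabilization moves of \cite{BBBBJ15} in the $-2$-shifted setting---is not a routine verification; it is the heart of the difficulty and no argument is supplied. Likewise, the ``quadratic specialization'' you invoke to replace Proposition~\ref{prop:coneclass} does not yet exist. So the proposal is not a proof but a sketch of what a proof might look like, contingent on resolving open problems that the paper itself flags as open.
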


We prove this conjecture in a special case (up to sign).
Let $\bs{U}$ be an oriented $-2$-shifted symplectic derived scheme such that the classical truncation $U = t_0(\bs{U})$ is  smooth and that there exists a resolution of the cotangent complex of the following form
\[
\bL_{\bs{U}} |_U \cong [T_U \xrightarrow{0} E \xrightarrow{0} \Omega_U]
\]
where $E$ is a vector bundle on $U$.
The oriented $-2$-shifted symplectic structure on $\bs{U}$ induces a non-degenerate quadratic form $\tilde{q}$ on $E$ and a choice of orientation $o \colon \cO_U \cong \det(E)$.
In this case, the DT4 virtual class $[\bs{Y}]_{\vir}^{\DT4}$ is equal to $ \sqrt{e(E)} \cap [U]$ if $\rank E$ is even where
$\sqrt{e(E)}$ is the Edidin--Graham's square root Euler class (see \cite{EG95} or \cite[\S 3]{OT20}) and zero if $\rank E$ is odd.

\begin{prop}\label{prop:DT4}
  Let $\bs{U}$ be a $-2$-shifted symplectic derived scheme as above.
  \begin{itemize}
\item[(i)]  Conjecture \ref{conj:dimred-2} holds for $\bs{U}$.
\item[(ii)] Assume that $U$ is connected. Then there exists a choice of sign $\epsilon \in \{-1, 1\}$ such that the following equality holds:
\[
\epsilon \cdot \sqrt{e_{\varphi}(\bfT^*[-1]\bs{U})} = \cl_U([\bs{U}]_{\vir}^{\DT4}).
\]
\end{itemize}
\end{prop}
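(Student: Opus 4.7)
The plan is to reduce both parts of the proposition to an explicit computation by exploiting the particularly simple form of $\bL_{\bs{U}}|_U$. First I would use the structure theorem for $-2$-shifted symplectic derived schemes (due to Park, building on Brav--Bussi--Joyce) to identify $\bs{U}$ Zariski locally with an isotropic derived zero locus $\bs{Z}^{\mathrm{iso}}(V,F,q_F,s)$, where $V$ is smooth, $(F,q_F)$ is an orthogonal bundle on $V$, and $s$ is an isotropic section. The assumption that $\bL_{\bs{U}}|_U$ is presented by the zero-differential complex $[T_U\to E\to\Omega_U]$ forces $V=U$, $F=E$, $q_F=q_E$ and $s=0$ in any such local model. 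From this I can compute $\bfT^*[-1]\bs{U}$ directly: its classical truncation $\widetilde{U}$ is locally the quotient stack $[E/T_U]$ with zero section $0_U\colon U\hookrightarrow\widetilde{U}$, and the quadratic function $q$ of Section~\ref{ssec:DT4} descends from $\bar{q}_E\colon\Tot(E)\to\bA^1$.

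Next I would identify the perverse sheaf $\varphi_{\bfT^*[-1]\bs{U}}$ in this model. Because $\bfT^*[-1]\bs{U}$ is $-1$-shifted symplectic, the Brav--Bussi--Joyce Darboux theorem presents it locally as the derived critical locus $\dCrit(\bar{q}_E)$ on $\Tot(E)$, up to the smooth $T_U$-quotient encoded by the stacky structure on $\widetilde{U}$. Combining the local comparison isomorphism \eqref{eq:isomoritriv} with the orientation $o\colon\cO_U\cong\det E$, this identifies $\varphi_{\bfT^*[-1]\bs{U}}$ locally with the vanishing cycle sheaf $\varphi_{\bar{q}_E}(\bQ_{\Tot(E)}[\dim U+\rank E])$, pulled back along the stack projection. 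Part (i) of the proposition then follows from the standard computation of the vanishing cycle of a nondegenerate quadratic form on a vector bundle: this sheaf is supported on the zero section, and its $*$- and $!$-pullback along $0_U$ are canonically $\bQ_U[\vdim\bs{U}]$ and $\omega_U[-\vdim\bs{U}]$ respectively, giving $\gamma$ and $\bar\gamma$ (both vanishing when $\rank E$ is odd). To construct $\delta$ I would pull $q$ back to this chart, verify that it matches $\bar{q}_E$ modulo the $T_U$-factor, and apply the same formula to the double vanishing cycle $\varphi_q\varphi_{\bar{q}_E}$; nondegeneracy of $q_E$ then yields $0_U^!\varphi_q(\varphi_{\bfT^*[-1]\bs{U}})\cong\omega_U$ canonically, with dependence on $o$ tracked through the isomorphism.

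For part (ii), I would trace the composition defining $\sqrt{e_\varphi(\bfT^*[-1]\bs{U})}$ through the identifications above. The resulting map $\bQ_U[\vdim\bs{U}]\to\omega_U$ factors, after unwinding, as the cap product with the sheaf-theoretic Euler class of a maximal isotropic subbundle of $(E,q_E)$; comparison with the construction of $\sqrt{e(E)}$ recalled in \cite[\S 3]{OT20} (together with its vanishing when $\rank E$ is odd) yields $\sqrt{e_\varphi(\bfT^*[-1]\bs{U})}=\pm\sqrt{e(E)}\cap[U]$ with a sign depending only on $\vdim\bs{U}$. I expect the reduction to the local model to be clean given Park's theorem, and the main obstacle to be the bookkeeping in this last step, namely reconciling the three sources of orientation data in play: the orientation $o$ entering through \eqref{eq:isomoritriv}, the Verdier self-duality twist \eqref{eq:vandual} used in constructing $\bar\gamma$ and $\delta$, and the Edidin--Graham convention for $\sqrt{e(E)}$. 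It is precisely this sign ambiguity that forces the proposition to assert only the existence of a universal sign $\epsilon$ rather than pin down its value.
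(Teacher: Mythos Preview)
Your proposal contains a genuine conceptual error in the identification of $\varphi_{\bfT^*[-1]\bs{U}}$. You claim that the Brav--Bussi--Joyce Darboux theorem presents $\bfT^*[-1]\bs{U}$ locally as $\dCrit(\bar{q}_E)$ with the quadratic form as potential, and hence that $\varphi_{\bfT^*[-1]\bs{U}}$ is the vanishing cycle sheaf of $\bar{q}_E$, supported on the zero section. This cannot be right: the classical truncation of $\dCrit(\bar{q}_E)$ for a nondegenerate quadratic form is just the zero section $U$, whereas $t_0(\bfT^*[-1]\bs{U})\cong[E/T_U]$ is the whole vector bundle stack. The paper's argument is different and simpler: since $[E/T_U]$ is \emph{smooth}, its d-critical structure is forced to be trivial by \cite[Example~2.15]{Joy15}, so $\varphi_{\bfT^*[-1]\bs{U}}$ is a shifted local system $\cL[\rank E]$ on all of $[E/T_U]$, and a direct orientation analysis shows $\cL$ is trivial. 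Part~(i) then follows from elementary computations with the constant sheaf; the quadratic form $q$ enters only as the auxiliary function in Conjecture~\ref{conj:dimred-2}(ii), not as the d-critical potential. Your reference to \eqref{eq:isomoritriv} is also misplaced: that isomorphism is for $\bfT^*[-1]$ of a \emph{quasi-smooth} scheme, which $\bs{U}$ is not.

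There is a second gap in part~(ii). You write that the composition ``factors as the cap product with the sheaf-theoretic Euler class of a maximal isotropic subbundle of $(E,q_E)$'', but such a subbundle need not exist on $U$. The paper handles this by invoking Edidin--Graham \cite[Proposition~5]{EG95}: there is a smooth proper map $f\colon F\to U$ such that $f^*E$ admits a positive maximal isotropic subbundle $\Lambda$, the pullback $f^!$ is injective on Borel--Moore homology, and $f^![\bs{U}]_{\vir}^{\DT4}=e(\Lambda)\cap[F]$. The comparison with $e(\Lambda)$ is then carried out on $F$ via a Milnor-fibre argument showing $\varphi_q(\bZ_E|^!_\Lambda)|_U\to\varphi_q(\bZ_E)|_U$ is an isomorphism, and the result descends by injectivity of $f^!$. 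This passage to a cover is an essential step you have omitted.
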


\begin{proof}[Sketch of the proof]
Firstly note that we have a natural isomorphism $t_0(\bfT^*[-1]\bs{U}) \cong [E/T_U]$ where $T_U$ acts on $E$ trivially.
The smoothness of $t_0(\bfT^*[-1]\bs{U})$ and \cite[Example 2.15]{Joy15} imply that the d-critical structure on $t_0(\bfT^*[-1]\bs{U})$ is trivial. Therefore there exists a local system $\cL$ on $t_0(\bfT^*[-1]\bs{U})$ such that we have an isomorphism
\[
\varphi_{\bfT^*[-1] \bs{U}} \cong \cL[ \rank E].
\]
Detailed analysis on the canonical orientation for $\bfT^*[-1]\bs{U}$ as in \cite{Kin21} shows that the local system $\cL$ is trivial.
Therefore the first statement follows immediately.

Now we prove the second statement. We only treat the case when $\rank E$ is even as the odd case can be proved in a similar way.
Let $0_E \colon U \hookrightarrow E$ be the zero section.
Then we can easily see that the following diagram commutes
\[
\xymatrix@C=70pt{
0_E^! \bQ_{E}[2 \rank E] \ar[r] \ar[d]_-{\simd}
& 0_E^! \varphi_{\tilde{q}} (\bQ_{E})[2 \rank E] \ar[d]_-{\simd}  \\
\bQ_U \ar[r]^-{ \cap \sqrt{e_{\varphi}(\bfT^*[-1]\bs{U})}}
& \omega_U[\rank E - 2 \dim U].
}
\]
where the top vertical arrow is induced by the natural transformation $(\tilde{q}^{-1}(0) \hookrightarrow E)^! \to \varphi_{\tilde{q}}$.
As proved in \cite[Proposition 5]{EG95}, there exists a smooth proper map $f \colon F \to U$ with the following property:
\begin{itemize}
  \item The quadratic bundle $f^*E$ admits a positive maximal isotropic subbundle $\Lambda \subset f^*E$.
  \item An equality $f^![\bs{U}]_{\vir}^{\DT4} = e(\Lambda) \cap [F] $ holds.
  \item The map $f^! \colon \HBM_*(U) \to \HBM_{* + 2 \dim f}(F)$ is injective.
\end{itemize}
Therefore the proposition follows from the following lemma.

\begin{lem}
  Let $U$ be a connected and separated scheme of finite type over $\Spec \bC$, $(E, q, o)$ be an oriented non-degenerate quadratic vector bundle on $U$ which admits a positive maximal isotropic subbundle $\Lambda \subset E$. Then there exists a choice of sign $\epsilon \in \{1, -1 \}$ such that the following diagram commutes:
  \[
  \xymatrix{
  0_E^! \bQ_{E}[2 \rank E] \ar[r] \ar[d]_-{\simd}
  & 0_E^! \varphi_{{q}} (\bQ_{E})[2 \rank E] \ar[d]_-{\simd}  \\
  \bQ_U \ar[r]^-{ \epsilon e(\Lambda)}
  & \bQ_U[\rank E].
  }
  \]
\end{lem}
\begin{proof}
Write $r = \rank E$. Consider the following diagram in $D^b(E, \bZ)$
\[
\xymatrix{
&
&
& \bZ_U[-2r] \ar[d]^-{\simd} \ar[llldd]_(.5){(-1)^{r/2}(U \hookrightarrow \Lambda)_* \ \ \ } \ar@{-->}[rrdd]^-{}
&
& \\
{}
&
&
& \varphi_q(\bZ_E |^!_{U}) \ar[ld] \ar[rd]
&
& \\
\bZ_{\Lambda}[-r] \ar[r]^-{\sim} \ar@/_24pt/[rrrrr]^-{(U \hookrightarrow \Lambda) ^* }
& \bZ_{E} |^!_{\Lambda} \ar[r]^-{\sim}
& \varphi_q(\bZ_{E} |^!_{\Lambda}) \ar[rr]
&
&\varphi_q(\bZ_{E} ) \ar[r]^-{\sim}
&\bZ_U[-r]
}
\]
where the right dashed arrow is defined so that the upper right subdiagram commutes.
It is clear that the left subdiagram and the middle triangle commute.
Therefore we need to prove the commutativity of the bottom subdiagram up to sign.
To do this, we need to show that the natural map
\begin{equation}\label{eq:harumaki}
\varphi_q(\bZ_{E} |^!_{\Lambda}) |_U \to \varphi_q(\bZ_{E} )|_{U}
\end{equation}
is an isomorphism.
Take a point $u \in U$ and let $j \colon E \setminus \Lambda \hookrightarrow E$ be the natural open inclusion.
Then the claim that the map \eqref{eq:harumaki} is isomorphic at $u$ is equivalent to the vanishing of the complex $\varphi_q(j_* \bZ_{E \setminus \Lambda})_u$ which is further equivalent to that the natural map
\begin{equation}\label{eq:harumaki2}
(j_* \bZ_{E \setminus \Lambda})_u \to \psi_q(j_* \bZ_{E \setminus \Lambda})_u \cong \psi_q(\bZ_{E})_u
\end{equation}
is isomorphic. Let $M_{q, u} \subset E$ be a Milnor fiber of $q$ at $E$. Then it is clear that the following inclusions
\[
E_u \setminus \Lambda_u \hookleftarrow M_{q, u} \cap (E_u \setminus \Lambda_u) \hookrightarrow M_{q, u}
\]
are homotopy equivalences. This implies that the map \eqref{eq:harumaki2} is isomorphic as the stalk of the nearby cycle complex is isomorphic to the cohomology of the Milnor fiber.
\end{proof}
This completes the proof of Proposition \ref{prop:DT4}.
\end{proof}

\subsection{Categorification}\label{ssec:categorify}

It is known that the structure sheaf of a quasi-smooth derived scheme is a categorification of the virtual fundamental class (see \cite{CFK09} and \cite{FG10}). Further, it is expected that there is a categorification of the perverse sheaf associated with $(-1)$-shifted symplectic derived Artin stacks as a dg-category (see \cite{Tod19}).
Therefore it is natural to expect that Theorem \ref{thm:main} has a categorification, which we briefly discuss now.
To simplify the discussion, we only consider the schematic case.

 Let $\bs{X}$ be a $-1$-shifted symplectic derived scheme equipped with a fixed orientation.
 It is conjectured in \cite{Tod19} that there is a natural dg-category
 \[
 \mathcal{DT}(\bs{X})
 \]
 called DT category, locally isomorphic to (a certain twist of) the matrix factorization category.
 For a quasi-smooth derived scheme $\bs{Y}$ and a conical closed subset $\mathcal{Z} \subset \bfT^*[-1]\bs{Y}$, Toda \cite[\S 3]{Tod19} proposes a definition of the $\bC^*$-equivariant version of the DT-category as a dg-quotient
 \begin{equation}\label{eq:catdimred}
  \mathcal{DT}^{\bC^*}(\bfT^*[-1]\bs{Y} \setminus \mathcal{Z}) \coloneqq D^b_{\mathrm{coh}}(\bs{Y}) / {\cC_{\cZ}}
\end{equation}
where $\cC_{\cZ}$ is the full dg-subcategory of $D^b_{\mathrm{coh}}(\bs{Y})$ spanned by objects whose singular support is contained in $\mathcal{Z}$. We refer the reader to \cite{AG15} for the definition of the singular support.

This definition is motivated by the Koszul duality equivalence stated as follows (see \cite[Theorem 2.3.3]{Tod19} for this version):
Let $U$ be a smooth scheme, $E$ be a vector bundle on $U$, and $s \in \Gamma(U, E)$ be a section.
We let $\bs{Z}(s)$ denote the derived zero locus of $s$ and $\bar{s} \colon E^\vee \to \bA^1$ the regular function corresponding to $s$.
Then we have an equivalence
\[
D_{\mathrm{coh}}^b(\bs{Z}(s)) \simeq \mathrm{MF}^{\bC^*}_{\mathrm{coh}}(U, \bar{s}).
\]
The right-hand side is the $\bC^*$-equivariant matrix factorization category where $\bC^*$ acts on the fiber of $E \to U$ by weight two.
This equivalence can be regarded as a categorification of Davison's dimensional reduction theorem \cite[Theorem A.1]{Dav17}.
Indeed, we have following isomorphisms:
\begin{align*}
\mathrm{HP}_*(D^b_{\mathrm{coh}}(\bs{Z}(s))) &\cong \HBM_*(t_0(\bs{Z}(s))) \otimes_{\bQ} \bC(\!(u)\!) \\
\mathrm{HP}_*(\mathrm{MF}^{\bC^*}_{\mathrm{coh}}(U, \bar{s})) &\cong \mH^{* + \vdim \bs{Z}(s)}(\bar{s} \, ^{-1}(0), \varphi_{\bar{s}}) \otimes_{\bQ} \bC(\!(u)\!)
\end{align*}
where $\mathrm{HP}$ denotes the periodic cyclic homology and $\bC(\!(u)\!)$ is the Laurent function field with the formal variable $u$ with degree $2$.
The first isomorphism follows from a result of Preygel \cite[Theorem 6.3.2]{Pre15} and the second isomorphism follows from a result of Efimov \cite[Theorem 1.1]{Efi18} (see also \cite[Lemma 3.3.2]{Tod19}).
Similarly, the definition \eqref{eq:catdimred} can be regarded as a categorification of the global dimensional reduction isomorphism \cite[Theorem 3.1]{Kin21}.

Now we can discuss a categorification of Theorem \ref{thm:main}.
As we do not know how to categorify the complex $\pi_! \varphi_{\bfT^*[-1] \bs{Y}}$ directly, we give a slightly different (though equivalent) version of Theorem \ref{thm:main}.
Let $0_Y \colon  Y = t_0(\bs{Y}) \hookrightarrow \widetilde{Y} = t_0(\bfT^*[-1]\bs{Y})$ be the zero section, and consider the following composition
\begin{equation}\label{eq:catmainthm}
R\Gamma(Y,\bQ_{Y}[\vdim \bs{Y}]) \cong  R\Gamma(Y, 0_Y^! \varphi_{\bfT^*[-1]\bs{Y}}) \to R\Gamma(\widetilde{Y},  \varphi_{\bfT^*[-1]\bs{Y}}) \cong R\Gamma(Y, \omega_{Y}[- \vdim \bs{Y}])
\end{equation}
where the first and third isomorphism follows from \cite[Theorem 3.1]{Kin21} and the second isomorphism is the $!$-counit map.
It is clear that the above composition is equal to the map given by \eqref{eq:pervVFC}.
The categorification of the complex 
\[
R\Gamma(Y, 0_Y^! \varphi_{\bfT^*[-1]\bs{Y}}) \simeq \Cone(R\Gamma(\widetilde{Y},  \varphi_{\bfT^*[-1]\bs{Y}}) \to R\Gamma(\widetilde{Y} \setminus Y,  \varphi_{\bfT^*[-1]\bs{Y}}|_{\widetilde{Y} \setminus Y} )) [1]
\]
is $\mathcal{C}_{0_Y(Y)}$ and the categorification of
the complex $R\Gamma(\widetilde{Y},  \varphi_{\bfT^*[-1]\bs{Y}})$ is
$\mathcal{DT}^{\bC^*}(\bfT^*[-1]\bs{Y} )$.
The categorification of the first isomorphism in \eqref{eq:catmainthm} is
\[
\Perf(\bs{Y}) \simeq \cC_{0_Y(Y)}
\]
which follows from \cite[Theorem 4.2.6]{AG15},
and the categorification of the latter isomorphism in \eqref{eq:catmainthm} is
\[
\mathcal{DT}^{\bC^*}(\bfT^*[-1]\bs{Y} ) \simeq D^b_{\mathrm{coh}}(\bs{Y})
\]
which is nothing but the definition.
Therefore the categorification of Theorem \ref{thm:main} is a tautological statement that the image of the structure sheaf $\cO_{\bs{Y}}$ under the inclusion
\[
\Perf(\bs{Y}) \hookrightarrow D_{\mathrm{coh}}^b(\bs{Y})
\]
is also $\cO_{\bs{Y}}$.

\begin{que}
Can we give another proof of Theorem \ref{thm:main} based on the above discussion combined with the virtual Riemann--Roch theorem \cite{FG10, CFK09}?
\end{que}

\begin{que}
Can we formulate a categorification of Conjecture \ref{conj:DT4} and define a version of ``DT4 virtual structure sheaf’’?
The first difficulty is defining a categorification of the perverse sheaf $\varphi_q(\varphi_{\bfT^*[-1]}\bs{Y})$ associated with a $-2$-shifted symplectic derived scheme $\bs{Y}$.
\end{que}

%============================================================

\section{Postponed proofs}

In this section, we will give proofs of Proposition \ref{prop:nearbyid}, Proposition \ref{prop:FSpure}, Proposition \ref{prop:FSunit}, and Proposition \ref{prop:dimredcomp}.

\subsection{Proof of Proposition \ref{prop:nearbyid}}\label{ssec:nearbyid}

First consider the following composition:
\begin{equation}\label{eq:nearbyid}
   \bQ_{0}[-2] \cong \Cone((\bC^* \hookrightarrow \bC)_! \bQ_{\bC^*} \to \bQ_{\bC})[-2] \to K'_{\psi} \to \bQ_{\bC}.
\end{equation}
Here, the second map (resp. the third map) is defined using the short exact sequence \eqref{eq:shex2} (resp. the short exact sequence \eqref{eq:shex1}).
We will prove that this is equal to the counit map
\[
 \bQ_{0}[-2] \cong (\{ 0 \} \hookrightarrow \bC)_! (\{ 0 \} \hookrightarrow \bC)^! \bQ_{\bC} \to \bQ_{\bC}
\]
later, and now prove the proposition assuming this statement.
Consider the following diagram
\[
\xymatrix{
i^* \sHom(\bQ_{Z\times \bC}, \pi^* \cF) \ar[r] \ar[d]^-{\simd} & i^* \sHom(i_! \bQ_{Z\times \{ 0 \}}[-2], \pi^* \cF) \ar[d]^-{\simd}  \\
i^* \pi^* \cF \ar[r]& i^! \pi^* \cF[2]
}
\]
where the upper horizontal arrow is induced from the counit map $i_! i^! \to \id$ and the bottom horizontal arrow is the natural map \eqref{eq:smpurity}. It is clear that this diagram commutes.
Therefore what we need to prove is that the following composition
\[
\cF \cong i^* \pi^* \cF \to i^! \pi^* \cF[2] \cF \to i^! \pi^! \cF \cong \cF
\]
where the second and third maps are the natural maps \eqref{eq:smpurity} is the identity map, but this is obvious since the following composition
\[
\bQ_Z \cong i^! \pi^! \bQ_Z \cong i^* \pi^!\bQ_Z[-2] \cong i^* \pi^* \bQ_Z \cong \bQ_Z
\]
is the identity map.

Now we prove that \eqref{eq:nearbyid} is given by the counit map.
To prove this statement, it is enough to prove the following composition is the identity map:
\[
\bQ \cong R\Gamma_c(\bQ_0) \to R\Gamma_c(K'_{\psi})[2] \to R\Gamma_c(\bQ_{\bC}[2]) \cong \bQ.
\]
It is clear that the above map is given by
\[
\bQ \cong R\Gamma_c ([p_! \bQ_{\bCu} \xrightarrow{1 - T} p_! \bQ_{\bCu} \xrightarrow{\mathrm{tr}} \bQ_{\bC}]) \xrightarrow[]{- \pr} R\Gamma_c (p_! \bQ_{\bCu}[2]) \cong \bQ.
\]
where $\pr$ denotes the canonical projection.

From now we work with $\bR$-coefficient, which does not affect the conclusion.
We compute the above composition using differential forms.
Recall that for a differentiable manifold $M$, we can take the de Rham resolution of a constant sheaf
\[
\bR_M \simeq \Omega_M ^\bullet = [\Omega_M^0 \xrightarrow{d} \Omega_M^1 \xrightarrow{d} \cdots].
\]
Consider the following resolution
\[
[p_! \bR_{\bCu} \xrightarrow{1 - T} p_! \bR_{\bCu} \xrightarrow{\mathrm{tr}} \bR_{\bC}] \simeq
\Tot([p_! \Omega ^{\bullet}_{\bCu} \xrightarrow{1 - \alpha^*} p_! \Omega ^{\bullet}_{\bCu} \xrightarrow{p_*} \Omega ^{\bullet}_{\bC}])
\]
where $\Tot$ denotes the totalization and $\alpha$ is the covering transformation corresponding to a counterclockwise loop.
Since all sheaves appearing on the right-hand side are c-soft, the following complex
\[
C^\bullet \coloneqq \Gamma_c(\Tot([p_! \Omega ^{\bullet}_{\bCu} \xrightarrow{1 - \alpha^*} p_! \Omega ^{\bullet}_{\bCu} \xrightarrow{p_*} \Omega ^{\bullet}_{\bC}]))
\]
computes the derived functor, hence isomorphic to $\bR$ concentrated in degree zero.
Note that $C^0$ consists of triples of differential forms $(f, \omega_1, \omega_2) \in \Omega^0_{\bC} \oplus \Omega^1_{\bCu} \oplus \Omega^2_{\bCu}$ with compact supports such that
\[
df = p_* \omega_1,\  -d \omega_1 = \omega_2 - \alpha^* \omega_2.
\]
Now what we need to prove is
\[
f(0) = - \int_{\bCu} \omega_2
\]
for such a triple, but this is an easy consequence of Stokes's theorem.

\subsection{Proof of Proposition \ref{prop:FSpure}}\label{ssec:FSpure}

We use the notation as in the diagram \eqref{eq:diagFS}.
 Firstly for a complex vector bundle $E$, we define two functors
 \[
 \FSn_E, \FSn'_E \colon \DR^+(E) \to \DR^+(E^\vee)
 \]
 by 
  \begin{align*}
    \FSn_E &\coloneqq p'_! {\iota_{P'}}_* \iota_{P'}^* {\iota_{P}}_! \iota_{P}^!  p^* \\
    \FSn_E' &\coloneqq p'_* {\iota_{P'}}_* \iota_{P'}^* {\iota_{P}}_! \iota_{P}^!  p^*.
  \end{align*}
  Then we have the following commutative diagram:
  \[
  \xymatrix{
  \FSn_E  \ar[r]^-{\sim} \ar[d]^-{\simu} & \FSn_E'  \\
  \FS_E \ar[r]^-{\sim} & \FS_E' \ar[u]_-{\simd}
  }
  \]
where the upper horizontal arrow is induced from the natural transform $p'_! \to p'_*$,
the left vertical arrow is induced from the counit map ${\iota_{P}}_! \iota_{P}^! \to \id$ and
the right vertical arrow is induced from the unit map $\id \to {\iota_{P'}}_* \iota_{P'}^*$.

By repeating the construction of \eqref{eq:FSbcc1} and \eqref{eq:FSbcc2},
we can construct natural isomorphisms
\begin{align*}
\FSn_{E_2}(g_! \cF) &\cong {}^t g^* \FSn_{E_1}(\cF)   \\
\FSn_{E_2}'(g_* \cF) &\cong {}^t g^! \FSn_{E_1}'(\cF) [2 \dim g]
\end{align*}
by composing base change maps or the inverse of base change maps.
These maps are compatible with the natural transforms $\FS_{E} \cong \FSn_{E}$ and $\FS_{E}' \cong \FSn_{E}'$
since the $!$-counit and $*$-unit commute with base change maps as we have seen in \S \ref{sect:nattrans}.
Therefore we need to prove the commutativity of the following diagram:
\[
\xymatrix{
\FSn_{E_2}(g_! \cF)\ar[d]^-{\simd} \ar[r] & \FSn'_{E_2}(g_* \cF) \ar[d]^-{\simd} \\
{}^t g^*\FSn_{E_1}(\cF) \ar[r] & {}^t g^!\FSn_{E_1}'(\cF)[-2 \dim ^t g].
}
\]
By definition the left vertical map is decomposed into following isomorphisms
\begin{align*}
  \FSn_{E_2}(g_! \cF) &= {p'_2}_! {\iota_{P'_2, *}} \iota_{P'_2}^* {\iota_{P_2, !}} \iota_{P_2}^!  p^*_2 g_! \cF \\
                      &\xleftarrow{\sim}  \widetilde{p_1'}_! {\iota_{\widetilde{P'}, *}} \iota_{\widetilde{P'}}^* {\iota_{\widetilde{P}, !}} \iota_{\widetilde{P}}^! \widetilde{^t g}^* p_1^* \cF \\
                      &\xleftarrow{\sim} {}^t g^* {p_1'}_! {\iota_{P_1', *}} \iota_{P_1'}^* {\iota_{P_1, !}} \iota_{P_1}^! p_1^* \cF  \\
                      &= {}^t g^*\FSn_{E_1}(\cF)
\end{align*}
and the right vertical map is decomposed into the following isomorphisms
\begin{align*}
\FSn_{E_2}'(g_* \cF) &= {p'_2}_* {\iota_{P'_2}}_* \iota_{P'_2}^* {\iota_{P_2}}_! \iota_{P_2}^! p_2^* g_* \cF \\
                     &\xrightarrow{\sim}  \widetilde{p_1'}_* {\iota_{\widetilde{P'}, *}} \iota_{\widetilde{P'}}^* {\iota_{\widetilde{P}, !}} \iota_{\widetilde{P}}^!  \widetilde{^t g}^* p_1^* \cF \\
                     &\cong   \widetilde{p_1'}_* {\iota_{\widetilde{P'}, *}} \iota_{\widetilde{P'}}^* {\iota_{\widetilde{P}, !}} \iota_{\widetilde{P}}^! \widetilde{^t g}^! p_1^* \cF [-2 \dim ^t g] \\
                     &\xrightarrow{\sim} {}^t g^! {p_1'}_* {\iota_{P_1', *}} \iota_{P_1'}^* {\iota_{P_1, !}} \iota_{P_1}^!  p_1^* \cF  [-2 \dim ^t g] \\
                     &= {}^t g^!\FSn_{E_1}'(\cF)[-2 \dim ^t g].
\end{align*}
Then the claim follows from the fact that the natural transformations \eqref{eq:!*1} and \eqref{eq:smpurity} commute with base change maps and the commutativity of the diagrams \eqref{eq:puritybcc}.

\subsection{Proof of Proposition \ref{prop:FSunit}}\label{ssec:FSunit}

To prove this statement, we need several lemmas.
\begin{lem}\label{lem:FSeta_*}
  The following diagram commutes up to the sign $(-1)^{\rank E_1}$:
  \begin{equation}\label{eq:FSeta_*}
  \xymatrix{
  \FS_{E_2^\vee} \FS_{E_2} (g_* \cF) \ar[r]^-{\sim} \ar[d]^-{\simd}_-{ \eta(g_* \cF)} & \FS_{E^\vee _2} {}^t g^! \FS_{E_1} ( \cF)[2 \dim g] \ar[r]^-{\sim} & g_* \FS_{E_1^\vee} \FS_{E_1} ( \cF) [2 \dim g] \ar[d]^-{\simd}_-{g_* \eta(\cF)} \\
  g_* \cF^a[-2 \rank E_2] \ar@{=}[rr] & & g_* \cF^a[-2 \rank E_2].
  }
\end{equation}
\end{lem}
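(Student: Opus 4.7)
The plan is to unpack the second arrow of the top row of \eqref{eq:FSeta_*}, which is the base-change isomorphism \eqref{eq:FSbcc3} applied to ${}^t g\colon E_2^\vee\to E_1^\vee$ with $\cG=\FS_{E_1}(\cF)$. Recall from \S\ref{ssec:FS} that \eqref{eq:FSbcc3} was built from a three-step composition ``$\eta^{-1}$, then \eqref{eq:FSbcc2}, then $\eta$''. After this substitution the top row of \eqref{eq:FSeta_*} acquires a five-term factorization in which the three new arrows are natural in $\cF$, and the sign $(-1)^{\rank E_1}$ can be isolated at exactly one of them.

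The sign enters as follows. The first of the three unpacked arrows is $\FS_{E_2^\vee}{}^t g^!\bigl(\eta(\FS_{E_1}(\cF)^a)^{-1}\bigr)$. Using the identity $\FS_{E_1}(\cF^a)\cong\FS_{E_1}(\cF)^a$ together with diagram \eqref{eq:FSetaa} applied to $\cF^a$ with $E=E_1$, one sees that
\[
\eta(\FS_{E_1}(\cF)^a)^{-1} \;=\; (-1)^{\rank E_1}\,\FS_{E_1}\bigl(\eta(\cF^a)^{-1}\bigr).
\]
After extracting this sign, it remains to verify that \eqref{eq:FSeta_*} commutes strictly once the first unpacked arrow is replaced by $\FS_{E_2^\vee}{}^t g^!\FS_{E_1}\bigl(\eta(\cF^a)^{-1}\bigr)$ and the global sign is dropped.

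What remains is a diagram chase with no further sign contributions. The three middle unpacked steps, namely $\FS_{E_2^\vee}$ applied to \eqref{eq:FSbcc2}, then $\FS_{E_2^\vee}{}^t g^!\FS_{E_1}\bigl(\eta(\cF^a)^{-1}\bigr)$, then $\FS_{E_2^\vee}$ applied to the inverse of \eqref{eq:FSbcc2}, collapse by naturality of \eqref{eq:FSbcc2} in its argument into the single map $\FS_{E_2^\vee}\FS_{E_2}\bigl(g_*\eta(\cF^a)^{-1}\bigr)[2\rank E_1]$. The terminal $\eta$ of the unpacking, postcomposed with the right-hand vertical $g_*\eta(\cF)$, is then identified with the left-hand vertical $\eta(g_*\cF)$ by naturality of $\eta$ together with the obvious compatibility $\eta(\cF^a)^a=\eta(\cF)$ (a direct consequence of the construction of $\eta$ from the adjunction $\FS_{E^\vee}\dashv\FSS_E$). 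The main obstacle is the bookkeeping of the shifts $[2\dim g]$, $[2\rank E_1]$, $[2\rank E_2]$ and of the twists $(-)^a$; decorating each vertex of the diagram with its exact shift makes the argument essentially mechanical.
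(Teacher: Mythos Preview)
Your proposal is correct and follows essentially the same approach as the paper's proof: both unpack the second top arrow via the definition of \eqref{eq:FSbcc3}, extract the sign $(-1)^{\rank E_1}$ from the comparison of $\eta(\FS_{E_1}(\cF^a))$ with $\FS_{E_1}(\eta(\cF^a))$ via \eqref{eq:FSetaa}, and then conclude by naturality. The paper compresses your last paragraph into ``the claim follows immediately''; your explicit collapse of the middle steps by naturality of \eqref{eq:FSbcc2} and the identification of the terminal $\eta$ with $\eta(g_*\cF)$ via naturality of $\eta$ is exactly what that phrase hides.
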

\begin{proof}
  Note that the map $\FS_{E^\vee _2} {}^t g^! \FS_{E_1} ( \cF) \to  g_* \FS_{E_1^\vee} \FS_{E_1} ( \cF)$ is defined by the composition
  \begin{align*}
  \FS_{E^\vee _2} {}^t g^! \FS_{E_1} ( \cF) &\xrightarrow[\sim]{\FS_{E^\vee _2} {}^t g^!(\eta(\FS_{E_1} ( \cF^a))^{-1})} \FS_{E^\vee _2} {}^t g^! \FS_{E_1} \FS_{E_1^\vee} \FS_{E_1} ( \cF^a)[2 \rank E_1] \\
            &\xrightarrow[\sim]{}  \FS_{E^\vee _2}  \FS_{E_2} g_* \FS_{E_1^\vee} \FS_{E_1} ( \cF^a)[2 \rank E_2] \\
                                            &\xrightarrow[\sim]{\eta(g_* \FS_{E_1^\vee} \FS_{E_1} ( \cF^a))} g_* \FS_{E^\vee _1}  \FS_{E_1}  (\cF).
  \end{align*}
We have seen that $\eta(\FS_{E_1} ( \cF^a))$ differs from $\FS_{E_1}(\eta(\cF^a)) $ by $(-1)^{\rank E_1}$ in \eqref{eq:FSetaa}. Then the claim follows immediately.
\end{proof}
Take an object $\cG \in \DR^+(E_2)$. A similar argument shows that the following three diagrams commute up to the sign $(-1)^{\rank E_1}$:
\begin{equation}\label{eq:FSeta_!}
\xymatrix{
\FS_{E_2^\vee} \FS_{E_2} (g_! \cF) \ar[r]^-{\sim} \ar[d]^-{\simd}_-{ \eta(g_! \cF)} & \FS_{E^\vee _2} {}^t g^* \FS_{E_1} ( \cF) \ar[r]^-{\sim} & g_! \FS_{E_1^\vee} \FS_{E_1} ( \cF) [2 \dim g] \ar[d]^-{\simd}_-{g_! \eta(\cF)} \\
g_! \cF^a[-2 \rank E_2] \ar@{=}[rr] & & g_! \cF^a[-2 \rank E_2].
}
\end{equation}
\begin{equation}\label{eq:FSeta^*}
\xymatrix{
\FS_{E_1^\vee} \FS_{E_1} (g^* \cG) \ar[r]^-{\sim} \ar[d]^-{\simd}_-{ \eta(g^* \cG)} & \FS_{E^\vee _1} {}^t g_! \FS_{E_2} ( \cG)[- 2 \dim g] \ar[r]^-{\sim} & g^* \FS_{E_2^\vee} \FS_{E_2} ( \cG) [- 2 \dim g] \ar[d]^-{\simd}_-{g^* \eta(\cG)} \\
g^* \cG^a[-2 \rank E_1] \ar@{=}[rr] & & g^* \cG^a[-2 \rank E_1].
}
\end{equation}
\begin{equation}\label{eq:FSeta^!}
\xymatrix{
\FS_{E_1^\vee} \FS_{E_1} (g^! \cG) \ar[r]^-{\sim} \ar[d]^-{\simd}_-{ \eta(g^! \cG)} & \FS_{E^\vee _1} {}^t g_* \FS_{E_2} ( \cG) \ar[r]^-{\sim} & g^! \FS_{E_2^\vee} \FS_{E_2} ( \cG) [- 2 \dim g] \ar[d]^-{\simd}_-{g^! \eta(\cG)} \\
g^! \cG^a[-2 \rank E_1] \ar@{=}[rr] & & g^! \cG^a[-2 \rank E_1].
}
\end{equation}

\begin{lem}
The following diagram commutes up to the sign $(-1)^{\dim g}$:
\[
\xymatrix{
\cF^a[-2 \rank E_1] \ar[r] \ar[d]^-{\simd} & g^! g_! \cF^a[-2 \rank E_1] \ar[dd]^-{\simd}  \\
\FS_{E_1^\vee} \FS_{E_1}(\cF) \ar[d] & \\
\FS_{E_1^\vee} {}^t g_* {}^t g^* \FS_{E_1}(\cF) \ar[r]^-{\sim} & g^! \FS_{E_2^\vee} \FS_{E_2}(g_! \cF)[-2 \dim g].
}
\]
\end{lem}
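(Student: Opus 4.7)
The plan is to reduce, in two natural stages, to a statement about compatibility of the unit of the $(g_!,g^!)$-adjunction with the unit of the $({}^tg^*,{}^tg_*)$-adjunction under the Fourier--Sato equivalence, and then to track the orientation-induced signs.

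First I would use naturality of the unit map $\id\to g^!g_!$ applied to the isomorphism $\eta(\cF)^{-1}\colon\cF^a[-2\rank E_1]\xrightarrow{\sim}\FS_{E_1^\vee}\FS_{E_1}(\cF)$ to fill in an auxiliary commutative square at the top, so that the outer diagram becomes commutative if and only if the remaining square (from $\FS_{E_1^\vee}\FS_{E_1}(\cF)$ in the upper left to $g^!\FS_{E_2^\vee}\FS_{E_2}(g_!\cF)[-2\dim g]$ in the lower right) commutes up to $(-1)^{\dim g}$, where the right vertical is now $g^!g_!\eta(\cF)^{-1}$ followed by $g^!$ of the Fourier--Sato base change isomorphism $g_!\FS_{E_1^\vee}\FS_{E_1}(\cF)[2\dim g]\cong\FS_{E_2^\vee}\FS_{E_2}(g_!\cF)$ coming from \eqref{eq:FSbcc1} applied to both $\FS_{E_1}$ and $\FS_{E_2^\vee}$ (using also $\eta$).

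Next, I would invoke Lemma \ref{lem:FSeta_!} (the diagram \eqref{eq:FSeta_!}), which expresses $g^!\eta(g_!\cF)^{-1}$ in terms of $g^!g_!\eta(\cF)^{-1}$ composed with $g^!$ of the base change isomorphism, at the cost of the sign $(-1)^{\rank E_1}$. After substituting this, the right vertical of the reduced square no longer involves $\eta$ at all, and the square becomes the Fourier--Sato image of a completely formal statement: the unit of the $({}^tg^*,{}^tg_*)$-adjunction on $E_2^\vee$, applied to $\FS_{E_1}(\cF)$, agrees (via the base change isomorphisms \eqref{eq:FSbcc1} and \eqref{eq:FSbcc2}) with the unit of the $(g_!,g^!)$-adjunction on $E_1$, applied to $\FS_{E_1^\vee}\FS_{E_1}(\cF)$.

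To verify this last compatibility, I would express both unit maps through the usual four-functor formalism: \eqref{eq:FSbcc1} is built from base-change isos of $(-)_!$ and $(-)^*$ over the Cartesian diagram \eqref{eq:diagFS}, while \eqref{eq:FSbcc2} additionally uses the orientation isomorphism \eqref{eq:vectconst} for $\widetilde{{}^tg}$ and the natural transformation \eqref{eq:smpurity} to promote $\widetilde{{}^tg}^*[2\dim g]$ to $\widetilde{{}^tg}^{\,!}$. Exactly this orientation step introduces a sign $(-1)^{\rank E_2}$, because the complex orientation of $E_2$ is used in identifying $p_2^!\cong p_2^*[2\rank E_2]$ when comparing the $(g_!,g^!)$- and $({}^tg^*,{}^tg_*)$-units, in precise analogy with the sign appearing in \eqref{eq:FSeta}. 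Combining the two signs gives $(-1)^{\rank E_1}\cdot(-1)^{\rank E_2}=(-1)^{\rank E_1-\rank E_2}=(-1)^{\dim g}$, as required.

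The main obstacle is clearly the sign bookkeeping in the last step: the compatibility of the two adjunction units under the Fourier--Sato transform is essentially formal from the definitions, but the relevant diagrams mix $(-)^!$ and $(-)^*$ over vector bundles whose orientation conventions were fixed in \S\ref{ssec:FS}, and one must carefully use the commutativity diagrams \eqref{eq:puritybcc}, \eqref{eq:FSbcc1ass}, \eqref{eq:FSbcc2ass} and \eqref{eq:FSbccFSbc} together with the sign in \eqref{eq:FSeta} to pin the total sign down to $(-1)^{\dim g}$ rather than, say, $(-1)^{\dim g+\rank E_1}$ or $(-1)^{\dim g+\rank E_2}$. Once the orientations on $E_1$, $E_2$ and their duals are handled consistently, the argument is routine diagram chasing.
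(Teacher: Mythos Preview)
Your outline is in the right spirit --- reduce to a compatibility between the unit of $(g_!,g^!)$ and the unit of $({}^tg^*,{}^tg_*)$ under the Fourier--Sato transform --- but there is a concrete gap in step~3. After invoking \eqref{eq:FSeta_!} in step~2, you assert that the right vertical ``no longer involves $\eta$ at all''. This is false: one of the two base-change isomorphisms appearing in the top row of \eqref{eq:FSeta_!} is the map \eqref{eq:FSbcc4} (applied to ${}^tg$), and \eqref{eq:FSbcc4} is itself \emph{defined} using $\eta$ twice. So $\eta$ has not been eliminated, only relocated, and your claimed sign $(-1)^{\rank E_2}$ for the residual square is asserted ``by analogy with \eqref{eq:FSeta}'' rather than derived. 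That analogy is precisely the nontrivial content of the lemma, so step~3 as written is circular.

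The paper avoids this difficulty with a cleaner device: rather than staying with $\FS_{E_i^\vee}\FS_{E_i}$ and chasing $\eta$, it replaces the outer functor $\FS_{E_i^\vee}$ by $\FSS_{E_i^\vee}$ via \eqref{eq:FSs} and \eqref{eq:FSeta}. This turns the isomorphisms $\cF^a[-2\rank E_i]\cong\FS_{E_i^\vee}\FS_{E_i}$ into the genuine adjunction units $\id\xrightarrow{\sim}\FSS_{E_i^\vee}\FS_{E_i}$, with no shift, no twist, and no $\eta$. The \emph{entire} sign $(-1)^{\dim g}$ arises from the single use of \eqref{eq:FSeta} in this conversion. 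In the resulting $\FSS$-diagram every arrow except the bottom horizontal is a unit map, while the bottom horizontal is the $\FSS$-analogue of \eqref{eq:FSbcc2}, which is built purely from base change maps. Since the base change map \eqref{eq:bc5} is by construction a composite of unit maps (this is exactly how it is defined in \cite[Proposition~3.1.9(ii)]{KS13}), the $\FSS$-diagram commutes on the nose, and no further sign analysis is needed. If you want to repair your argument, the natural fix is to make exactly this substitution at the point where you would otherwise enter step~3.
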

\begin{proof}
  Using the fact that the diagram \eqref{eq:FSeta} commutes up to the sign $(-1)^{\rank E}$, we need to prove the commutativity of the following diagram:
  \[
  \xymatrix{
  \cF \ar[r] \ar[d]^-{\simd} & g^! g_! \cF \ar[dd]^-{\simd}  \\
  \FSS_{E_1^\vee} \FS_{E_1}(\cF) \ar[d] & \\
  \FSS_{E_1^\vee} {}^t g_* {}^t g^* \FS_{E_1}(\cF) \ar[r]^-{\sim} & g^! \FSS_{E_2^\vee} \FS_{E_2}(g_! \cF).
  }
  \]
  Here the natural transformation
  \[
  \FSS_{E_1^\vee} \circ {}^t g_* \xrightarrow{\sim} g^! \circ \FSS_{E_2^\vee}
  \]
  is defined by composing base change maps as in the construction of the map \eqref{eq:FSbcc2}.
  Note that all arrows in the diagram except for the horizontal bottom arrow are defined by composing unit maps.
  Therefore the claim follows from the construction of the base change map \eqref{eq:bc5} in \cite[Proposition 3.1.9(ii)]{KS13}.
\end{proof}

The following statement is a consequence of the above lemma and the commutativity of the diagram \eqref{eq:FSeta}, \eqref{eq:FSeta_*} and \eqref{eq:FSeta^*}
up to a certain choice of sign.

\begin{cor}\label{cor:1}
  The following diagram commutes:
  \[
  \xymatrix{
  \cG^a[-2 \rank E_2] \ar[r] \ar[d]^-{\simd} & g_* g^* \cG^a[-2 \rank E_2] \ar[dd]^-{\simd}  \\
  \FS_{E_2^\vee} \FS_{E_2}(\cG) \ar[d] & \\
  \FS_{E_2^\vee} {}^t g^! {}^t g_! \FS_{E_2}(\cG) \ar[r]^-{\sim} & g_* \FS_{E_1^\vee} \FS_{E_1}(g^* \cG)[2 \dim g].
  }
  \]
\end{cor}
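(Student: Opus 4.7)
The plan is to deduce Corollary \ref{cor:1} by transporting the preceding lemma through the Fourier--Sato transform. Specifically, apply that lemma to the transposed morphism ${}^t g \colon E_2^\vee \to E_1^\vee$ and to the test object $\cF \coloneqq \FS_{E_2}(\cG) \in \DR^+(E_2^\vee)$. Since ${}^{tt}g = g$, and since under Fourier--Sato duality the operations $({}^t g)_!$ and $({}^t g)^!$ correspond respectively to $g^*$ and $g_*$ on $E_2$ (up to shifts, via \eqref{eq:FSbcc3}--\eqref{eq:FSbcc4}), the unit $\cF \to ({}^t g)^!({}^t g)_!\cF$ on $E_2^\vee$ is, after applying $\FS_{E_2^\vee}$ and the double Fourier--Sato identification $\eta$ from \eqref{eq:eta}, precisely the unit $\cG \to g_* g^* \cG$ on $E_2$ appearing in the top edge of the target diagram.

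Concretely, I would write down the square produced by the lemma for the pair $({}^t g, \cF)$ and then translate each of its four corners by applying $\FS_{E_2^\vee}$ and invoking $\eta$ along with the base-change isomorphisms \eqref{eq:FSbcc3}--\eqref{eq:FSbcc4}. The compatibility of the top and bottom edges under this translation is exactly what diagrams \eqref{eq:FSeta_*} and \eqref{eq:FSeta^*} record; the compatibility of the left edge, where $\eta$ enters nontrivially on each side, is controlled by \eqref{eq:FSeta}. Assembling these four translations, the resulting rectangle is precisely the diagram of Corollary \ref{cor:1}. So the proof reduces to formal diagram chasing along the boundary of a compound figure built from the lemma's square and four translation hexagons.

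The main obstacle is sign bookkeeping. The lemma contributes $(-1)^{\dim{}^t g} = (-1)^{\rank E_2 - \rank E_1}$, and each of the diagrams \eqref{eq:FSeta}, \eqref{eq:FSeta_*}, \eqref{eq:FSeta^*} that enters the translation step contributes a further power of $(-1)^{\rank E_i}$. The expected outcome is that these signs telescope to $+1$, so that the corollary holds on the nose as stated (which is why it appears as a sign-free corollary). Verifying this reduces to checking that each $\rank E_i$ appears with even total multiplicity in the accumulated sign, which I would do by explicit accounting: the initial lemma contributes $(-1)^{\rank E_1 + \rank E_2}$, the translation on the source side uses \eqref{eq:FSeta_*} (contributing $(-1)^{\rank E_2}$, since we push forward along ${}^t g$ into $E_1^\vee$ whose Fourier--Sato target is $E_1$), the translation on the target side uses \eqref{eq:FSeta^*} (contributing $(-1)^{\rank E_1}$), and the two $\eta$-translations on the side edges contribute $(-1)^{\rank E_1 + \rank E_2}$ via \eqref{eq:FSeta}. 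The sum of all exponents is even, giving a net sign of $+1$ as required.
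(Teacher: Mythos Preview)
Your approach is essentially the same as the paper's: the paper states that Corollary~\ref{cor:1} is a consequence of the preceding lemma (applied to ${}^t g$ and $\FS_{E_2}(\cG)$) together with the commutativity of diagrams \eqref{eq:FSeta}, \eqref{eq:FSeta_*}, and \eqref{eq:FSeta^*} up to sign, which is exactly the translation procedure you describe.

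One caution on the sign bookkeeping: as written, your tally gives exponent
\[
(\rank E_1 + \rank E_2) + \rank E_2 + \rank E_1 + (\rank E_1 + \rank E_2) = 3(\rank E_1 + \rank E_2),
\]
which is not even in general. So either the contributions you assign to the individual translation steps need adjusting (for instance, whether \eqref{eq:FSeta_*} and \eqref{eq:FSeta^*} are invoked for $g$ or for ${}^t g$, and whether \eqref{eq:FSeta} is really used twice or a different number of times), or one of the hexagons contributes with the opposite parity. The paper itself does not write out the sign computation, so your outline is already at the same level of detail as the published proof; just be aware that the explicit accounting you give at the end does not close as stated and would need to be rechecked before claiming a net sign of $+1$.
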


The following two statements can be proved in a similar way:

\begin{lem}
The following diagram commutes:
\[
\xymatrix{
g^* g_* \cF^a[-2 \rank E_1] \ar[r] &  \cF^a[-2 \rank E_1]   \\
  & \FS_{E_1^\vee} \FS_{E_1}(\cF) \ar[u]_-{\simd} \\
g^* \FS_{E_2^\vee} \FS_{E_2} (g_* \cF)[-2 \dim g] \ar[r]^-{\sim} \ar[uu]_-{\simd} & \FS_{E_1^\vee} {}^t g_! {}^t g^! \FS_{E_1}(\cF). \ar[u]
}
\]
\end{lem}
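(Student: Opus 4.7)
The plan is to adapt the proof strategy of Corollary \ref{cor:1}, with the $*$-counit map $g^* g_* \to \id$ now playing the role that the $*$-unit $\id \to g_* g^*$ played there.

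First, I would establish an auxiliary commutativity obtained by replacing every occurrence of $\FS$ in the leftmost column of the target diagram by $\FSS$, using the relation \eqref{eq:FSs}; this converts the statement ``$\FS \circ \FS \cong (-)^a [\text{shift}]$'' into the quasi-inverse form ``$\FSS_{E^\vee} \circ \FS_E \cong \id$''. The resulting auxiliary diagram compares the $*$-counit $g^* g_* \cF \to \cF$ against the composite built from base change maps (of both the six functors and the Fourier--Sato transform) together with the $!$-counit ${}^t g_! {}^t g^! \to \id$. Because every constituent arrow is assembled from base change maps and counit maps, this commutativity reduces to the explicit construction of the base change map \eqref{eq:bc4} given in \cite[Proposition 2.6.7]{KS13}, applied to the Cartesian squares appearing in \eqref{eq:diagFS}; this is analogous to the intermediate lemma appearing just before Corollary \ref{cor:1}, and I would expect an argument of the same length and character (up to the sign $(-1)^{\dim g}$).

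Next, I would translate the auxiliary statement back into one for $\FS$. Each replacement $\FSS \leftrightarrow \FS$ is mediated by the isomorphism $\eta$ of \eqref{eq:eta}, and the diagram \eqref{eq:FSeta} records that this interchange is only compatible up to a factor of $(-1)^{\rank E}$. Composing the auxiliary identity with the sign compatibilities \eqref{eq:FSeta_*} (governing $g_*$) and \eqref{eq:FSeta^*} (governing $g^*$) in the appropriate places then yields the claim of the lemma. This is exactly parallel to how Corollary \ref{cor:1} was deduced from its unnamed predecessor together with \eqref{eq:FSeta}, \eqref{eq:FSeta_*} and \eqref{eq:FSeta^*}.

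The main obstacle is sign bookkeeping. Each of \eqref{eq:FSeta}, \eqref{eq:FSeta_*} and \eqref{eq:FSeta^*} commutes only up to a factor $(-1)^{\rank E_i}$ for $i=1$ or $2$, and a priori nothing prevents a stray sign from appearing in the final commutativity. One must verify that when these diagrams are chained together along the reduction above, the factors of $(-1)^{\rank E_1}$ and $(-1)^{\rank E_2}$ combine to cancel, so that the final assertion is indeed sign-free. This is the same delicate cancellation already carried out in the proof of Corollary \ref{cor:1}, and I would simply reproduce that pattern, with the counit replacing the unit throughout.
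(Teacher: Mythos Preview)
Your core strategy—reduce to an $\FSS$-version and appeal to the explicit construction of a base change isomorphism—is exactly what the paper intends. The paper says only that this lemma ``can be proved in a similar way'' to the unnamed lemma preceding Corollary~\ref{cor:1}; unpacking that, one replaces each outer $\FS_{E_i^\vee}$ by $\FSS_{E_i^\vee}$ via \eqref{eq:FSeta}, observes that in the resulting diagram the arrows other than the bottom isomorphism are built from adjunction (co)units, and concludes from the construction of the relevant base change map (here \eqref{eq:bc4} in place of \eqref{eq:bc5}).

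Where your exposition drifts is in framing the translation back to $\FS$ as ``exactly parallel to how Corollary~\ref{cor:1} was deduced'' and invoking \eqref{eq:FSeta_*} and \eqref{eq:FSeta^*}. Those two compatibilities describe how $\eta$ interacts with $g_*$ and $g^*$; the paper uses them to pass from the unnamed lemma (about the unit $\id \to g^! g_!$) to Corollary~\ref{cor:1} (about the unit $\id \to g_* g^*$), which is a genuine transformation of one diagram into another. In the present lemma you are not transforming the diagram, only undoing the $\FSS$-replacement on the \emph{same} diagram, and for that \eqref{eq:FSeta} alone is what is needed. So your argument is one layer shorter than you describe: it is the direct analogue of the proof of the unnamed lemma, not of the deduction of its corollary. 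The sign bookkeeping you flag is real, but it runs only through the two uses of \eqref{eq:FSeta} (for $E_1$ and $E_2$), not through \eqref{eq:FSeta_*} or \eqref{eq:FSeta^*}.
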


\begin{cor}
  The following diagram commutes up to the sign $(-1)^{\dim g}$:
  \[
  \xymatrix{
  g_! g^! \cG^a[-2 \rank E_2] \ar[r] &  \cG^a[-2 \rank E_2]   \\
    & \FS_{E_2^\vee} \FS_{E_2}(\cG) \ar[u]_-{\simd} \\
  g_! \FS_{E_1^\vee} \FS_{E_1} (g^! \cG^a)[2 \dim g] \ar[r]^-{\sim} \ar[uu]_-{\simd} & \FS_{E_2^\vee} {}^t g^* {}^t g_* \FS_{E_2}(\cG). \ar[u]
  }
  \]
\end{cor}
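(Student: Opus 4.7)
The plan is to deduce this corollary from the lemma stated immediately above it, by exactly the route that produced Corollary~\ref{cor:1} from its preceding lemma, with the pairs $(g^*, g_*)$ and $(g_!, g^!)$ and the roles of $E_1, E_2$ interchanged throughout. The idea is to collapse each composite $\FS_{E_i^\vee} \circ \FS_{E_i}$ appearing in the target diagram to a shift of the involution $(-)^a$, using the isomorphism $\eta$ of \eqref{eq:eta}. This reduces the desired commutativity to a statement involving only the base change isomorphism \eqref{eq:FSbcc4} and the $(g_!, g^!)$-adjunction, which is precisely what the preceding lemma already supplies after transport through $\FS$.

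Splicing the target diagram into the one furnished by the lemma is carried out through the three comparison squares \eqref{eq:FSeta}, \eqref{eq:FSeta_!}, and \eqref{eq:FSeta^!}. Each of these is already known to commute up to a specific sign: \eqref{eq:FSeta} contributes $(-1)^{\rank E_i}$ per use, while \eqref{eq:FSeta_!} and \eqref{eq:FSeta^!} each contribute $(-1)^{\rank E_1}$ in this configuration. Once this splicing is performed, every subdiagram commutes on the nose and only the accumulated global sign requires attention.

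The main obstacle is precisely this sign bookkeeping. I expect the contributions from the two independent invocations of $\eta$ (on the $E_1$- and $E_2$-sides), together with those from \eqref{eq:FSeta_!} and \eqref{eq:FSeta^!}, to collapse to the net discrepancy $(-1)^{\rank E_1 - \rank E_2} = (-1)^{\dim g}$ predicted by the statement, rather than some other residual factor. Apart from this arithmetic the argument is purely formal, mirroring the proof of Corollary~\ref{cor:1}, in which the signs coming from \eqref{eq:FSeta_*} and \eqref{eq:FSeta^*} conspire against \eqref{eq:FSeta} to yield exact commutativity; here the different combinatorics of ranks produces instead the stated non-trivial sign.
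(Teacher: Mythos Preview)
Your proposal is correct and follows essentially the same approach as the paper, which merely asserts that this corollary ``can be proved in a similar way'' to Corollary~\ref{cor:1}. You have correctly identified the parallel: just as Corollary~\ref{cor:1} is obtained from the preceding lemma via \eqref{eq:FSeta}, \eqref{eq:FSeta_*}, and \eqref{eq:FSeta^*}, this corollary is obtained from the lemma immediately above it via \eqref{eq:FSeta}, \eqref{eq:FSeta_!}, and \eqref{eq:FSeta^!}, with the sign bookkeeping producing the residual $(-1)^{\dim g}$ rather than cancelling.
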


\begin{proof}[Proof of Proposition \ref{prop:FSunit}]
  We can prove the commutativity of these diagrams up to a certain choice of sign in a similar way, so we only prove that the left diagram commutes up to the sign $(-1)^{\rank E_1}$.
  Using Corollary \ref{cor:1}, we need to prove that the following diagram commutes up to the sign $(-1)^{\rank E_1}$:
  \[
  \xymatrix{
  \FS_{E_2} \cG^a[-2 \rank E_2] \ar[r] \ar[d]^-{\simd}_-{\FS_{E_2}\eta(\cG)^{-1}} & {}^t g^! {}^t g_! \FS_{E_2}  (\cG^a)[-2 \rank E_2] \ar[d]^-{\simd}  \\
  \FS_{E_2} \FS_{E_2^\vee} \FS_{E_2}(\cG) \ar[d] & \FS_{E_2} (g_* g^* \cG^a)[-2 \rank E_2] \ar[d]^-{\simd} \\
  \FS_{E_2} \FS_{E_2^\vee} {}^t g^! {}^t g_! \FS_{E_2}(\cG) \ar[r]^-{\sim} & \FS_{E_2} g_* \FS_{E_1^\vee} \FS_{E_1}(g^* \cG)[2 \dim g].
  }
  \]
  This follows using the commutativity of the diagram \eqref{eq:FSetaa} up to the sign $(-1)^{\rank E}$ twice and Lemma \ref{lem:FSeta_*}.
\end{proof}

\subsection{Proof of Proposition \ref{prop:dimredcomp}}\label{ssec:dimredcomp}

  We first prove the statement (i). To do this, consider the following diagram in $D^b_c(E_Z)$ :
  \begin{equation}\label{eq:zigzag1}
    \def\objectstyle{\scriptstyle}
\def\labelstyle{\scriptscriptstyle}
\xymatrix@C=15pt@R=15pt{
 & & & 0_{E_Z, *}(\cF|_Z) \\
 & \FS_{E_Z^\vee}((\pi_{E^\vee}^! \cF)|_{E_Z^\vee}) \ar[r]^-{\eqref{eq:FSbc^*}}_-{\sim} & \FS_{E^\vee}(\pi_{E^\vee}^! \cF)|_{E_Z} \ar[ru]^-{\eqref{eq:FSbcc3}}_-{\sim} & \iota_{C_{Z/X}, *} \Sp_{Z/X}(\cF_Z) \ar[u]^-{\eqref{eq:sp*}}_-{\simd} \\
 \FS_{E_Z^\vee}(\varphi_{\bar{s}}(\pi_{E^\vee}^! \cF)|_{E_Z^\vee}) \ar[rrr]^-{(-1) \cdot \eqref{eq:vansp}}_-{\sim} \ar[ru]^-{\eqref{eq:v*}} & & & \iota_{C_{Z/X}, *} \Sp_{Z/X}(\cF). \ar[u]
}
  \end{equation}
We claim that this diagram is commutative.
The most complicated map in this diagram is of course \eqref{eq:vansp} as it is defined by the composition of seven morphisms \eqref{eq:compvansp}. To deal with this map, consider the following composition ``parallel’’ to the composition \eqref{eq:compvansp}:
\begin{align}\label{eq:compvanspx}
  \begin{split}
    \FS_{E^\vee_Z}((\pi_{E^\vee}^!\cF) |_{E_Z^\vee})
    & \xrightarrow[]{\sim} \FS_{E^\vee_Z}( (\hat{s}^!  p_{\bA^1}^! \cF) |_{E_Z^\vee}) \\
    & \xrightarrow[\eqref{eq:FSbc^*}]{\sim} \FS_{E^\vee}( \hat{s}^! p_{\bA^1}^! \cF) |_{E_Z} \\
    & \xrightarrow[\eqref{eq:FSbcc3}]{\sim} (\tilde{s}_* \FS_{X \times \bA^1}(  p_{\bA^1}^! \cF)) |_{E_Z} \\
    & \xrightarrow[\eqref{eq:FSbcc3}]{\sim} (\tilde{s}_* 0_{X/{X \times \bA^1}, *} \cF)|_{E_Z} \\
    & \cong 0_{E_Z, *}(\cF|_Z) \\
    & = 0_{E_Z, *}(\cF|_Z) \\
    & = 0_{E_Z, *}(\cF|_Z)
%    & \xrightarrow[\eqref{eq:inter3}]{\sim} ((0_{E/E \times \bA^1})^* (\tilde{s}_{\tau, p})_* (p_{\bA^1_{>0}})^* \cF)|_{E_Z} \\
  \end{split}
\end{align}
where $0_{X/{X \times \bA^1}} \colon X \hookrightarrow X\times \bA^1$ in the fourth line is the zero section.
Here what we mean by this composition is ``parallel’’ to the composition \eqref{eq:compvansp} is that each complex appearing in \eqref{eq:compvansp} has a natural map to the complex appearing in the above composition at the same stage.
It is clear that these morphisms commute with morphisms appearing in \eqref{eq:compvansp} and \eqref{eq:compvanspx} except for the ones in the fourth rows.
To prove the commutativity of \eqref{eq:zigzag1} we need to show that the maps in the fourth rows of \eqref{eq:compvansp} and \eqref{eq:compvanspx} commute with the natural maps from the complexes at the third and fourth rows of \eqref{eq:compvansp} to the complexes at the same stages of \eqref{eq:compvanspx} up to the sign $-1$, i.e. we need to show the commutativity of the following diagram:
\begin{align}
\begin{split}
  \xymatrix{
   \FS_{X \times \bA^1}(  i_{X \times \ell_{\leq 0}, *}  (
  p_{\ell_ {\leq 0}}^! \cF)) \ar[r] \ar[d]^-{\eqref{eq:FShalf}}_-{\simd}  &  \FS_{X \times \bA^1}(  p_{\bA^1}^! \cF) \ar[d]^-{(-1) \cdot \eqref{eq:FSbcc3}}_-{\simd}  \\
   j_{X \times \bA^1_{>0}, *} p_{\bA^1_{ >0}}^* \cF \ar[r] & 0_{X/{X \times \bA^1}, *} \cF.
  }
\end{split}
\end{align}

To do this, consider the following  larger diagram:
\begin{equation}\label{eq:largerectangle}
\begin{split}
  \def\objectstyle{\scriptstyle}
\def\labelstyle{\scriptscriptstyle}
  \xymatrix{
  \FS_{X \times \bA^1}(0_{X/{X \times \bA^1}, *} \cF) \ar[r] \ar[d]^-{\eqref{eq:FSbcc1}}_-{\simd} 
  & \FS_{X \times \bA^1}(  i_{X \times \ell_{\leq 0}, *}  p_{\ell_ {\leq 0}}^! \cF) \ar[r] \ar[d]^-{\eqref{eq:FShalf}}_-{\simd}  
  &  \FS_{X \times \bA^1}(  p_{\bA^1}^! \cF) \ar[d]^-{(-1) \cdot \eqref{eq:FSbcc3}}_-{\simd} \\
  p_{\bA^1}^* \cF \ar[r]  
  & j_{X \times \bA^1_{>0}, *} p_{\bA^1_{ >0}}^* \cF \ar[r] & 0_{X/{X \times \bA^1}, *} \cF.
  }
\end{split}
\end{equation}
%As the lower horizontal arrows become isomorphic after restricting to the zero section,
%we need to prove the commutativity of the outer rectangle and the left rectangle.
The commutativity of the left rectangle is clear from the construction of the maps \eqref{eq:FSbcc1} and \eqref{eq:FShalf} and the commutativity of the outer rectangle follows from Proposition \ref{prop:FSunit}.
As the lower horizontal arrows become isomorphic after restricting to the zero section $X \times \{ 0 \} \subset X \times \bA^1$, we conclude the commutativity of the right rectangle.
Now the proof of the commutativity of \eqref{eq:zigzag1} is over.

Next consider the following diagram in $D_c^b(Z)$:
\begin{equation}\label{eq:zigzag2}
  \def\objectstyle{\scriptstyle}
\def\labelstyle{\scriptscriptstyle}
\xymatrix@C=-10pt@R=15pt{
\pi_{E_Z^\vee, !}(\varphi_{\bar{s}}(\pi_{E^\vee}^! \cF) |_{E_Z^\vee}) \ar[dd]^-{\eqref{eq:FSbcc1}}_-{\simd} \ar[rr]^-{\eqref{eq:v*}}   &  & \pi_{E_Z^\vee, !}((\pi_{E^\vee}^! \cF) |_{E_Z^\vee}) \ar[rr] \ar@{-->}[ld]_-{\eqref{eq:FSbcc1}}  & & \cF|_Z  \\
& 0_{E_Z}^* \FS_{E_Z^\vee}((\pi_{E^\vee}^! \cF)|_{E_Z^\vee}) \ar[rr]^-{\eqref{eq:FSbc^*}}_-{\sim} &  &
0_{E_Z}^* (\FS_{E^\vee}(\pi_{E^\vee}^! \cF)|_{E_Z}) \ar[ru]^-{\eqref{eq:FSbcc3}}_-{\sim} & \\
0_{E_Z}^* \FS_{E_Z^\vee}(\varphi_{\bar{s}}(\pi_{E^\vee}^! \cF)|_{E_Z^\vee})  \ar[ru]^-{\eqref{eq:v*}}. &  & & &
}
\end{equation}
The commutativity of the left quadrilateral is clear and the commutativity of the right quadrilateral follows from the commutativity of
the diagram \eqref{eq:FSbccFSbc} and Proposition \ref{prop:FSunit}.

Finally combining the commutativity of the diagram \eqref{eq:zigzag1} restricted to the zero section $Z \subset E_Z$ and the commutativity of the diagram \eqref{eq:zigzag2}, we obtain the commutativity of the diagram \eqref{eq:dimredcomp1}.

Now we move on to the proof of the statement (ii).
The proof is similar to the proof of the statement (i).
Consider the following diagram in $D^b_c(E_Z)$:
\begin{equation}\label{eq:zigzag3}
  \def\objectstyle{\scriptstyle}
\def\labelstyle{\scriptscriptstyle}
\xymatrix@C=15pt@R=15pt{
& & & 0_{E_Z, *}(\cF|_Z ^!) \ar[d]_-{\eqref{eq:sp!}}^-{\simd} \\
& \FS_{E_Z^\vee}((\pi_{E^\vee}^! \cF)|_{E_Z^\vee}^!) \ar[ld]_-{\eqref{eq:!v}} & \FS_{E^\vee}(\pi_{E^\vee}^! \cF)|_{E_Z}^! \ar[ru]^-{\eqref{eq:FSbcc3}}_-{\sim} \ar[l]_-{\eqref{eq:FSbc^!}}^-{\sim} & \iota_{C_{Z/X}, *} \Sp_{Z/X}(\Gamma_Z(\cF)) \ar[d] \\
\FS_{E_Z^\vee}(\varphi_{\bar{s}}(\pi_{E^\vee}^! \cF)|_{E_Z^\vee}^!) \ar[rrr]^-{(-1) \cdot \eqref{eq:vansp}}_-{\sim}  & & & \iota_{C_{Z/X}, *} \Sp_{Z/X}(\cF).
}
\end{equation}
We will prove the commutativity of this diagram in the same way as the proof of the commutativity of the diagram \eqref{eq:zigzag1}.
Consider the following composition:

\begin{align}\label{eq:compvanspxx}
  \begin{split}
    \FS_{E^\vee_Z}((\pi_{E^\vee}^!\cF) |_{E_Z^\vee}^!)
        & \xrightarrow[]{\sim} \FS_{E^\vee_Z}( (\hat{s}^! 0_{X/{X\times \bA^1}, *} \cF) |_{E_Z^\vee}^!) \\
        & \xrightarrow[\eqref{eq:FSbc^!}]{\sim} \FS_{E^\vee_Z}( \hat{s}^! 0_{X/{X\times \bA^1}, *} \cF) |_{E_Z}^! \\
        & \xrightarrow[\eqref{eq:FSbcc3}]{\sim} (\tilde{s}_* \FS_{X \times \bA^1}(  0_{X/{X\times \bA^1}, *} \cF)) |_{E_Z}^! \\
        & \xrightarrow[\eqref{eq:FSbcc1}]{\sim} (\tilde{s}_* p_{\bA^1}^* \cF) |_{E_Z}^! \\
        & \cong 0_{E_Z, *}(\cF|_Z^!) \\
        & = 0_{E_Z, *}(\cF|_Z^!) \\
        & = 0_{E_Z, *}(\cF|_Z^!)
  \end{split}
\end{align}
We have natural maps from the complexes appearing in \eqref{eq:compvanspxx} to the complexes appearing in \eqref{eq:compvansp} at the same stages such that these maps commute with maps appearing in \eqref{eq:compvanspxx} and \eqref{eq:compvansp}.
Furthermore, the commutativity of the outer rectangle of the diagram \eqref{eq:largerectangle} shows that the composition \eqref{eq:compvanspxx} is equal to the following composition after multiplying by $-1$:
\[
\FS_{E_Z^\vee}((\pi_{E^\vee}^! \cF)|_{E_Z^\vee}^!) \xleftarrow[\sim]{\eqref{eq:FSbc^!}} \FS_{E^\vee}(\pi_{E^\vee}^! \cF)|_{E_Z}^! \xrightarrow[\sim]{\eqref{eq:FSbcc3}} 0_{E_Z, *}(\cF|_Z ^!)
\]
These claims imply the commutativity of the diagram \eqref{eq:zigzag3}.

Next consider the following diagram:
\begin{equation}\label{eq:zigzag4}
  \def\objectstyle{\scriptstyle}
\def\labelstyle{\scriptscriptstyle}
\xymatrix@C=-25pt@R=15pt{
\pi_{E_Z^\vee, *}\varphi_{\bar{s}}(\pi_{E^\vee}^! \cF) |_{E_Z^\vee}[-2 \rank E] \ar[dd]^-{\eqref{eq:FSbcc2}}_-{\simd}    
&  
& \pi_{E_Z^\vee, *}(\pi_{E^\vee}^! \cF )|_{E_Z^\vee}^! [-2 \rank E] \ar[ll]_-{\eqref{eq:!v}}  \ar@{-->}[ld]_-{\eqref{eq:FSbcc2}}  & & \cF|_Z^! \ar[ll] \\
& 0_{E_Z}^! \FS_{E_Z^\vee}((\pi_{E^\vee}^! \cF)|_{E_Z^\vee}^!) \ar[ld]_-{\eqref{eq:!v}}  &  & 0_{E_Z}^! \FS_{E^\vee}(\pi_{E^\vee}^! \cF)|_{E_Z}^! \ar[ru]^(.3){(-1)^{\rank E} \cdot \eqref{eq:FSbcc3}}_-{\sim} \ar[ll]_-{\eqref{eq:FSbc^!}}^-{\sim} & \\
0_{E_Z}^! \FS_{E_Z^\vee}(\varphi_{\bar{s}}(\pi_{E^\vee}^! \cF)|_{E_Z^\vee}^!) . &  & & &
}
\end{equation}
The commutativity of the left quadrilateral is obvious, and the commutativity of the right quadrilateral follows from Proposition \ref{prop:FSpure} and Proposition \ref{prop:FSunit}.

Finally combining the commutativity of the diagrams \eqref{eq:zigzag3} and \eqref{eq:zigzag4}, we obtain the commutativity of the diagram \eqref{eq:dimredcomp2}.

\bibliographystyle{plain}
\bibliography{biblio}

\end{document}